\DeclareMathOperator{\Id}{Id}
\DeclareMathOperator{\rank}{rank}
\newcommand{\N}{\ensuremath{\mathbb{N}}}
\newcommand{\R}{\ensuremath{\mathbb{R}}}
\theoremstyle{definition}
        \newtheorem*{remark}{Remark}
        \newtheorem*{example}{Example}
\theoremstyle{plain}
        \newtheorem{theorem}{Theorem}
        \newtheorem{lemma}{Lemma}
        \newtheorem{corollary}[theorem]{Corollary}
        \newtheorem{prop}{Proposition}
\title{\textbf{Schauder estimates for degenerate L\'evy \\
Ornstein-Ulhenbeck operators}}
\author{\textbf{Lorenzo Marino}\footnote{Laboratoire de Mod\'elisation Math\'ematique d'Evry (LaMME), Universit\'e d'Evry Val d'Essonne, $23$
Boulevard de France $91037$ Evry, France and Dipartimento di Matematica, Universit\`a di Pavia, Via Adolfo Ferrata $5$, $27100$ Pavia,
Italy.\newline
\emph{E-mail Adress:} lorenzo.marino@univ-evry.fr}\phantom{,} \!\footnote{This work was supported by a public grant ($2018-0024H$)
as part of the FMJH project.}}
\begin{document}
\maketitle
\begin{abstract}
We establish global Schauder estimates for integro-partial differential equations (IPDE) driven by a possibly degenerate
L\'evy Ornstein-Uhlenbeck operator, both in the elliptic and parabolic setting, using some suitable anisotropic H\"older spaces. The class
of operators we consider is composed by a linear drift plus a L\'evy operator that is comparable, in a suitable sense, with a
possibly truncated stable operator. It includes for example, the relativistic, the tempered, the layered or the Lamperti
stable operators. Our method does not assume neither the symmetry of the L\'evy operator nor the invariance for dilations
of the linear part of the operator. Thanks to our estimates, we prove in addition the well-posedness of the considered IPDE in  suitable
functional spaces.
In the final section, we extend some of these results to more general operators involving non-linear, space-time dependent
drifts.
\end{abstract}

{\small{\textbf{Keywords:} Schauder estimates, degenerate IPDEs, L\'evy Ornstein-Uhlenbeck Operators.}}

{\small{\textbf{MSC:} Primary: $35$J$70$, $35$K$65$, $35$R$09$, $35$B$45$; Secondary: $60$H$30$.}}

\section{Introduction}
Fixed an integer $N$ in $\N$, we consider the following integro-partial differential operator of Ornstein-Uhlenbeck type:
\begin{equation}
\label{Degenerate_Stable_PDE}
  \mathcal{L}^{\text{ou}} \, := \, \mathcal{L}+ \langle A x , D_x\rangle \quad \text{ on } \R^N,
\end{equation}
where  $\langle \cdot,\cdot \rangle$ denotes the Euclidean inner product on $\R^N$, $A$ is a matrix in $\R^N\otimes \R^N$ and $\mathcal{L}$ is a
possibly degenerate, L\'evy operator acting non-degenerately only on a subspace of $\R^N$. We are interested in showing the
\emph{well-posedness} and the associated \emph{Schauder estimates} for elliptic and parabolic equations involving the operator
$\mathcal{L}^{\text{ou}}$ and with coefficients in a generalized family of H\"older spaces. \newline
We only assume that $A$ satisfies a natural controllability assumption, the so-called Kalman rank condition (condition
[\textbf{K}] below), and that the operator $\mathcal{L}$ is comparable, in a suitable sense, to a non-degenerate,
truncated $\alpha$-stable operator on the same subspace of $\R^N$, for some $\alpha<2$ (condition [\textbf{ND}]
below).\newline
The topic of Schauder estimates for Ornstein-Uhlenbeck operators has been widely studied in the last decades, especially in the diffusive,
local setting, i.e.\ when $\mathcal{L}=\frac{1}{2}\text{Tr}\bigl(Q D^2_x\bigr)$ for some suitable matrix $Q$, and it is now
quite well-understood. See e.g.\ \cite{Daprato:Lunardi95}, \cite{Lunardi97}, \cite{book:Gilbarg:Trudinger01}, \cite{DiFrancesco:Polidoro06},
\cite{Krylov:Priola10}, \cite{Chaudru:Honore:Menozzi18_Sharp}. \newline
On the other hand, a literature on the topic for the pure jump, non-local framework has been developed only in the recent years
(\cite{Bass09}, \cite{Dong:Kim13}, \cite{Bae:Kassmann15}, \cite{Ros-Oton:Serra16}), \cite{Fernanadez:Ros-Oton17},
\cite{Imbert:Jin:Shvydkoy18}, \cite{Chaudru:Menozzi:Priola19}, \cite{Kuhn19},  but mainly
in the non-degenerate, $\alpha$-stable setting, i.e.\ when $\mathcal{L}=\Delta^{\alpha/2}_x$ is the fractional Laplacian on $\R^N$ or
similar. Up to the best of our knowledge, the only two articles dealing with the degenerate, non-local framework (if
$\mathcal{L}=\Delta^{\alpha/2}_x$ acts non-degenerately only on a sub-space of $\R^N$) are
\cite{Hao:Peng:Zhang19}, that takes into account the kinetics dynamics ($N=2d$), and \cite{Marino20}, for the general chain. In
order to use \cite{Hao:Peng:Zhang19} or \cite{Marino20} for our operator \eqref{Degenerate_Stable_PDE}, we would need to impose the
additional strong assumption of invariance for dilations of the matrix $A$.\newline
Even if the Ornstein-Uhlenbeck operator is usually seen as a "toy model" for more general operators with space-time
dependent, non-linear coefficients, we highlight that they appear naturally in various scientific contexts: for example in physics, for the
analysis of anomalous diffusions phenomena or for Hamiltonian models in a turbulent regime (see e.g.
\cite{Baeumer:Benson:Meerschaert01}, \cite{Cushman:Park:Kleinfelter:Moroni05} and the references therein) or in mathematical finance and
econometrics (see e.g.\ \cite{Brockwell01}, \cite{Barndorff-Nielsen:Shephard01}). \newline
The interest in Schauder estimates involving this type of operator also follows from the natural application which consists in establishing
the well-posedness of stochastic differential equations (SDE) driven by L\'evy processes and the associated stochastic control theory. See
e.g.\ \cite{Fleming:Mitter82}, \cite{Chaudru:Menozzi17_Weak}, \cite{Hao:Wu:Zhang19}.\newline
Under our assumptions, we have been able to consider more general L\'evy operators not usually included in the literature, such as the
relativistic stable process, the layered stable process or the Lamperti one (see Paragraph "Main Operators Considered" below for details).
Moreover, we do not require the operator $\mathcal{L}$ to be symmetric.\newline
Here, we only mention one important example that satisfies our hypothesis, the Ornstein-Uhlenbeck operator on $\R^2$ driven by the
relativistic fractional Laplacian $\Delta^{\alpha/2}_{\text{rel}}$ and acting only on the first component:
\begin{equation}\label{eq:example_new}
x_1(D_{x_1}\phi(x)+D_{x_2}\phi(x)) + \text{p.v.}\int_{\R}\bigl[\phi(\begin{pmatrix} x_1+z \\ x_2 \end{pmatrix})-\phi(\begin{pmatrix} x_1
\\ x_2 \end{pmatrix})\bigr]\frac{1+\vert z \vert^{\frac{d+\alpha -1}{2}}}{\vert z \vert^{d+\alpha}}e^{-\vert z \vert}\, dz \, = \, \langle A
x , D_x\phi(x) \rangle + \mathcal{L}\phi(x)
\end{equation}
where $x=(x_1,x_2)$ in $\R^2$. Such an example is included in the framework of Equation \eqref{Degenerate_Stable_PDE}
considering $A=\begin{pmatrix}1 & 0 \\ 1 & 0 \end{pmatrix}$. This operator appears naturally as a fractional
generalization of the relativistic Schr\"odinger operator (See \cite{Ryznar02} for more details).\newline
We remark that example \eqref{eq:example_new} cannot be considered in \cite{Hao:Peng:Zhang19} or in our previous work
\cite{Marino20}. Indeed, the matrix
$A_0$ is not "dilation-invariant" (see example \ref{example_basic} below) and thus, it cannot be rewritten in the form used in
\cite{Marino20} (see also \cite{Lanconelli:Polidoro94} Proposition $2.2$ for a more thorough explanation). Furthermore, operators like the
relativistic fractional Laplacian cannot be treated  in \cite{Hao:Peng:Zhang19} or \cite{Marino20} that indeed have taken into account only
stable-like operators on $\R^N$. Another useful advantage of our technique is that we do not need anymore the symmetry of
the L\'evy measure $\nu$ which was, again, a key assumption in \cite{Marino20}.

More in details, given an integer $d\le N$ and a matrix $B$ in $\R^N\otimes \R^d$ such that $\text{rank}(B)=d$, we consider a
family of  operators $\mathcal{L}$ that can be represented for any sufficiently regular function $\phi\colon \R^N\to \R$ as
\begin{equation}\label{eq:def_operator_L}
\mathcal{L}\phi(x)\, := \, \frac{1}{2}\text{Tr}\bigl(BQB^\ast D^2_x\phi(x)\bigr) +\langle Bb,D_x \phi(x)\rangle +
\int_{\R^d_0}\bigl[\phi(x+Bz)-\phi(x)-\langle D_x\phi(x), Bz\rangle\mathds{1}_{B(0,1)}(z) \bigr] \,\nu(dz),
\end{equation}
where $b$ is a vector in $\R^d$, $Q$ is a symmetric, non-negative definite matrix in $\R^d\otimes \R^d$ and $\nu$ is a L\'evy measure on
$\R^d_0:=\R^d\smallsetminus \{0\}$, i.e.\ a $\sigma$-finite measure on $\mathcal{B}(\R^d_0)$, the Borel $\sigma$-algebra on $\R^d_0$, such
that $\int(1\wedge \vert z\vert^2) \, \nu(dz)$ is finite. We then suppose $\nu$ to satisfy the following \emph{non-degeneracy condition}:
\begin{description}
  \item[{[ND]}] there exists $r_0>0$, $\alpha$ in $(0,2)$ and a finite, non-degenerate measure $\mu$ on the unit sphere $\mathbb{S}^{d-1}$
      such that
      \[\nu(C) \, \ge \, \int_{0}^{r_0}\int_{\mathbb{S}^{d-1}} \mathds{1}_{C}(r\theta)\, \mu(d\theta)\frac{dr}{r^{1+\alpha}}, \quad
      C\in \mathcal{B}(\R^d_0).\]
\end{description}
We recall that a measure $\mu$ on $\R^d$ is non-degenerate if there exists a constant $\eta \ge 1$ such that
\begin{equation}\label{eq:non_deg_measure}
\eta^{-1}\vert p \vert^\alpha \, \le \, \int_{\mathbb{S}^{d-1}}\vert p\cdot s \vert^\alpha \, \mu(ds) \, \le\,\eta
\vert p \vert^\alpha, \quad p \in \R^d,
\end{equation}
where $"\cdot"$ stands for the inner product on the smaller space $\R^d$. Since any $\alpha$-stable L\'evy measure $\nu_\alpha$  can be
decomposed into a spherical part $\mu$ on
$\mathbb{S}^{d-1}$ and a radial part $r^{-(1+\alpha)}dr$ (see e.g.\ Theorem $14.3$ in \cite{book:Sato99}), assumption
[\textbf{ND}] roughly states that the L\'evy measure of the integro-differential part of $\mathcal{L}$ is bounded from below by the L\'evy
measure of a possibly truncated, $\alpha$-stable operator on $\R^d$. \newline
It is assumed moreover that the matrixes $A$, $B$ satisfy the following \emph{Kalman condition}:
\begin{description}
  \item[{[K]}] It holds that $N \, = \, \text{rank}\bigl[B,AB,\dots,A^{N-1}B\bigr]$,
\end{description}
where $\bigr[B,AB,\dots,A^{N-1}B\bigr]$ is the matrix in $\R^N\otimes\R^{dN}$ whose columns are $B,AB,\dots,A^{N-1}B$. \newline
Such an assumption is equivalent, in the linear framework, to the  H\"ormander condition (see \cite{Hormander67}) on the
commutators, ensuring the hypoellipticity of the operator $\partial_t-\mathcal{L}^{\text{ou}}$. Moreover, condition [\textbf{K}] is
well-known in control theory (see e.g.\ \cite{book:Zabczyk95}, \cite{Priola:Zabczyk09}).

\paragraph{Mathematical Outline.} In the present paper, we aim at establishing global Schauder estimates for equations involving the operator
$\mathcal{L}^{\text{ou}}$ on $\R^N$, both in the elliptic and parabolic settings. Namely, we consider for a fixed $\lambda>0$ the following
elliptic equation:
\begin{equation}\label{eq:Elliptic_IPDE}
\lambda u(x) - \mathcal{L}^{\text{ou}}u(x) \, = \, g(x), \quad x \in \R^N,
\end{equation}
and, for a fixed time horizon $T>0$, the following parabolic Cauchy problem:
\begin{equation}\label{eq:Parabolic_IPDE}
\begin{cases}
  \partial_tu(t,x) \, = \, \mathcal{L}^{\text{ou}}u(t,x)+f(t,x), \quad (t,x) \in (0,T)\times \R^N; \\
  u(0,x) \, = \, u_0(x), \quad x \in \R^N,
\end{cases}
\end{equation}
where $f,g,u_0$ are given functions. Since our aim is to show optimal regularity results in H\"older spaces, we will assume for the elliptic
case (Equation \eqref{eq:Elliptic_IPDE}) that the source $g$ belongs to a suitable \emph{anisotropic}
H\"older space $C^\beta_{b,d}(\R^N)$ for some $\beta$ in $(0,1)$, where the H\"older exponent depends on the "direction" considered. The space
$C^\beta_{b,d}(\R^N)$ can be understood as composed by the bounded functions on $\R^N$
that are H\"older continuous with respect to a distance $\rho$ somehow induced by the operator $\mathcal{L}^{\text{ou}}$. We refer to
Section $2$ for a detailed exposition of such an argument but we
highlight already that the above mentioned distance $d$ can be seen as a generalization of the classical parabolic distance, adapted to our
degenerate, non-local framework. It is precisely assumption [\textbf{K}], or equivalently the hypoellipticity of
$\partial_t+\mathcal{L}^{ou}$, that ensures the existence of such a distance $d$ and gives it its anisotropic nature. Roughly speaking, it
allows the smoothing effect of the L\'evy operator $\mathcal{L}$ acting non-degenerately only on some components, say $B\R^N$, to spread in
the whole space $\R^N$, even if with lower regularizing properties. \newline
Concerning the parabolic problem \eqref{eq:Parabolic_IPDE}, we assume similarly that $u_0$ is in $C^{\alpha+\beta}_{b,d}(\R^N)$ and that
$f(t,\cdot)$ is in $C^{\beta}_{b,d}(\R^N)$, uniformly in $t\in(0,T)$. The typical estimates we want to prove can be stated in the parabolic
setting in the following way: there exists a constant $C$, depending only on the parameters of the model, such that any
distributional solution $u$ of the Cauchy problem \eqref{eq:Parabolic_IPDE} satisfies
\begin{equation}
\label{eq:SchauderEstimatesintro} \tag{$\mathscr{S}$}
\Vert u \Vert_{L^\infty(C^{\alpha+\beta}_{b,d})} \, \le \, C\bigl[\Vert u_0 \Vert_{C^{\alpha+\beta}_{b,d}}+\Vert f
\Vert_{L^\infty(C^{\beta}_{b,d})} \bigr].
\end{equation}
As a by-product of the Schauder Estimates $(\mathscr{S})$, we will obtain the well-posedness of the
Cauchy problem \eqref{eq:Parabolic_IPDE} in the space $L^\infty\bigl(0,T;C^{\alpha+\beta}_{b,d}(\R^N)\bigr)$ , once the existence of a
solution is established. The additional regularity for the solution $u$ with respect to the source $f$ reflects the appearance of a
smoothing effect associated with $\mathcal{L}^{\text{ou}}$ of order $\alpha$, as it is expected by condition [\textbf{ND}]. It can be seen
as a generalization of the "standard" parabolic bootstrap to our degenerate, non-local setting. We highlight that the parabolic
bootstrap in ($\mathscr{S}$) is precisely derived from the non-degenerate stable-like part in $\mathcal{L}$ (lowest regularizing effect in
the operator).  \newline
To show our result, we will follow the semi-group approach as firstly introduced in \cite{Daprato:Lunardi95}, which became
afterwards a very robust tool to study Schauder estimates in a wide variety of frameworks (\cite{Lunardi97}, \cite{Lorenzi05}, \cite{Saintier07}, \cite{Priola09}, \cite{Priola12}, \cite{Dong:Kim13}, \cite{Kim:Kim15}, \cite{Chaudru:Honore:Menozzi18_Sharp}, \cite{Kuhn19}). The main idea is to consider the Markov transition semigroup $P_t$ associated with
$\mathcal{L}^{ou}$ and then, in the elliptic case, to use the Laplace transform
formula in order to represent the unique distributional solution $u$ of Equation \eqref{eq:Elliptic_IPDE} as:
\[u(x) \, = \, \int_{0}^{\infty}e^{-\lambda t}\bigl[P_tg\bigr](x) \, dt \, =: \, \int_{0}^{\infty}e^{-\lambda t}P_tg(x) \, dt, \quad x \in
\R^N.\]
In the parabolic setting, we exploit instead the variation of constants (or Duhamel) formula in order to show a similar representation for
the weak solution of the Cauchy problem \eqref{eq:Parabolic_IPDE}:
\[u(t,x) \, = \, P_tu_0(x) +\int_{0}^{t}\bigl[P_{t-s}f(s,\cdot)\bigr](x)\, ds \, =: \, P_tu_0(x) +\int_{0}^{t}P_{t-s}f(s,x)\, ds, \quad t
\in [0,T], \, x \in \R^N.\]
In order to prove global regularity estimates for the solutions, the crucial point is to understand the
action of the operator $P_t$ on the anisotropic H\"older spaces. In particular, we will show in Corollary
\ref{corollary:continuity_between_holder} the continuity of $P_t$
as an operator from $C^\beta_{b,d}(\R^N)$ to $C^\gamma_{b,d}(\R^N)$ for $\beta<\gamma$ and, more precisely, that it holds:
\begin{equation}\label{eq:Intro:Gradient_Estimates}
\Vert P_t \phi\Vert_{C^\gamma_{b,d}} \, \le \,C\Vert \phi\Vert_{C^{\beta}_{b,d}}\Bigl(1+t^{-\frac{\gamma-\beta}{\alpha}}\Bigr), \quad t>0.
\end{equation}
The above estimate can be obtained through interpolation techniques (see Equation \eqref{eq:Interpolation_ineq}), once
sharp controls in supremum norm (Theorem \ref{prop:Control_in_Holder_norm} below) are established for the spatial derivatives of $P_t\phi$
when $\phi \in C^\beta_{b,d}(\R^N)$. We think that such an estimate \eqref{eq:Intro:Gradient_Estimates} and the controls in
Theorem \ref{prop:Control_in_Holder_norm} can be of independent interest and used also beyond our scope in other contexts. \newline
We face here two main difficulties to overcome. While in the gaussian setting, $L^\infty$-estimates of this type have been
established exploiting, for example, explicit formulas for the density of the semigroup $P_t$ (\cite{Lunardi97}), a priori controls of
Bernstein type combined with interpolation methods (\cite{Lorenzi05} and \cite{Saintier07}, when $n=2$ in \eqref{eq:def_of_n} below) or
probabilistic representations of the semigroup $P_t$, allowing Malliavin calculus (\cite{Priola09}), we cannot rely on
these techniques in our non-local framework, mainly due to the lower integrability properties for $P_t$. Instead,
we are going to use a \emph{perturbative approach} which consists in considering the L\'evy operator $\mathcal{L}$ as a perturbation, in a
suitable sense, of an $\alpha$-stable operator, at least for the associated small jumps. Indeed, we can "decompose" the operator
$\mathcal{L}$ in a smoother part, $\mathcal{L}^\alpha$, whose L\'evy measure is given by
\[ \mu(d\theta)\frac{\mathds{1}_{(0,r_0]}(r)}{r^{1+\alpha}}dr \]
and a remainder part. It is precisely condition [\textbf{ND}] that allows such a decomposition, since it ensures the positivity of the
L\'evy measure
\[d\nu-d\mu \frac{\mathds{1}_{[0,r_0]}}{r^{1+\alpha}}dr\]
associated with the remainder term. The main difference with the previous techniques in the diffusive setting is that we will work mainly on
the truncated $\alpha$-stable contribution $\mathcal{L}^\alpha$, being the remainder term only bounded. \newline
Following \cite{Schilling:Sztonyk:Wang}, we will establish that the Hartman-Winter condition holds, ensuring the
existence of a smooth density for the semigroup associated with $\mathcal{L}^\alpha$ and then, the required gradient estimates.
Indeed, assumption [\textbf{ND}] roughly states that the small jump
contributions of $\nu$, the ones responsible for the creation of a density, are controlled from below by an $\alpha$-stable measure, whose
absolute continuity is well-known in our framework. \newline
On the other hand, we will have to deal with the degeneracy of the operator $\mathcal{L}$, that acts non-degenerately, through the embedding
matrix $B$, only on a subspace of dimension $d$. It will be managed adapting the reasonings firstly appeared in \cite{Huang:Menozzi16}.
Namely, we will show that the semigroup associated with the Ornstein-Uhlenbeck operator $\mathcal{L}^{ou}$ coincides with a
non-degenerate one but "multiplied" by a time-dependent matrix that precisely takes into account the original degeneracy of the operator
(see definition of matrix $\mathbb{M}_t$ in Section $2.1$).

\paragraph{Main Operators Considered.} We conclude this introduction showing that assumption [\textbf{ND}] applies to a large class of L\'evy
operators on $\R^d$. As already pointed out in \cite{Schilling:Sztonyk:Wang}, it is satisfied by any L\'evy measure $\nu$ that can be
decomposed in polar coordinates as
\[\nu(C) \, = \, \int_{0}^{\infty}\int_{\mathbb{S}^{d-1}}\mathds{1}_{C}(r\theta)Q(r,\theta)\,\mu(d\theta) \frac{dr}{r^{1+\alpha}}, \quad C
\in \mathcal{B}(R^d_0),\]
for a finite, non-degenerate (in the sense of Equation \eqref{eq:non_deg_measure}), measure $\mu$ on $\mathbb{S}^{d-1}$ and a Borel
function $Q\colon(0,\infty)\times \mathbb{S}^{d-1}\to \R$ such
that there exists $r_0>0$ so that
\[ Q(r,\theta) \, \ge  \, c>0, \quad \text{a.e. in }[0,r_0]\times \mathbb{S}^{d-1}.\]
In particular, assumption [\textbf{ND}] holds for the following families of "stable-like" examples with $\alpha \in (0,2)$:
\begin{enumerate}
  \item Stable operator \cite{book:Sato99}:
  \[Q(r,\theta) \, = \, 1;\]
  \item Truncated stable operator with $r_0>0$ \cite{Kim:Song08}:
  \[Q(r,\theta) \, = \, \mathds{1}_{(0,r_0]}(r);\]
  \item Layered stable operator with $\beta$ in $(0,2)$ and $r_0>0$ \cite{Houdre:Kawai07}:
  \[Q(r,\theta) \, = \, \mathds{1}_{(0,r_0]}(r)+\mathds{1}_{(r_0,\infty)}(r)r^{\alpha-\beta};\]
  \item Tempered stable operator \cite{Rosinski09}:
  \[Q(\cdot,\theta) \text{ completely monotone, $Q(0,\theta)>0$ and $Q(\infty,\theta)=0$ a.e.\ in }S^{d-1};\]
  \item Relativistic stable operator \cite{Carmona:Masters:Simon90}, \cite{Byczkowski:Malecki:Ryznar09}:
  \[Q(r,\theta) \, = \, (1+r)^{(d+\alpha-1)/2}e^{-r};\]
  \item Lamperti stable operator with $f\colon S^{d-1}\to \R$ such that $\sup f(\theta)<1+\alpha$ \cite{Caballero:Pardo:Perez10}:
  \[Q(r,\theta) \, = \, e^{rf(\theta)}\Bigl(\frac{r}{e^r-1}\Bigr)^{1+\alpha}.\]
\end{enumerate}

\setcounter{equation}{0}
\paragraph{Organization of Paper.} The article is organized as follows. Section $2$ introduces some useful notations and then, the
anisotropic distance $d$ induced by the dynamics as well as Zygmund-H\"older spaces associated with such a distance. In Section $3$, we are
going to show some analytical properties of the semigroup $P_t$ generated by $\mathcal{L}^{ou}$, such as the existence of a
smooth density and, at least for small times, some controls for its derivatives. Section $4$ is then
dedicated to different estimates in the $L^\infty$-norm for $P_tf$ and its spatial derivatives, involving the supremum or the H\"older norm
of the function $f$. In particular, we show here the
continuity of $P_t$ as an operator between anisotropic Zygmund-H\"older spaces. In Section $5$, we use the controls established in the
previous parts in order to prove the elliptic Schauder estimates and show that Equation \eqref{eq:Elliptic_IPDE} has a unique weak solution.
Similarly, we establish the weak well-posedness of the Cauchy problem \eqref{eq:Parabolic_IPDE} as well as the associated parabolic
Schauder estimates. In the final section of the article, we briefly explain some possible extensions of the previous results to non-linear,
space-time dependent operators.

\section{Geometry of the Dynamics}
In this section, we are going to choose the right functional space "in which" to state our Schauder
estimates. The idea is to construct an H\"older space $C^\gamma_{b,d}(\R^N)$ with respect to a distance $d$ that it is
homogeneous to the dynamics, i.e. such that for $\gamma$ in $(0,1)$ and $f$ in $C^\beta_{b,d}(\R^N)$, any
distributional solution $u$ of
\begin{equation}\label{eq:distrib.sol}
\mathcal{L}^{\text{ou}}u(x) \, = \, \mathcal{L}u(x)+\langle Ax,Du(x)\rangle \, = \, f(x), \quad \text{$x$ in }\R^N
\end{equation}
is in $C^{\alpha+\beta}_{b,d}(\R^N)$, the expected parabolic bootstrap associated to this kind of operator. We recall in particular that the Kalman rank condition [\textbf{K}] is equivalent to the hypoellipticity (in the sense of H\"ormander \cite{Hormander67}) of the operator $\mathcal{L}^{\text{ou}}$ that
ensures the existence and smoothness of a distributional solution of Equation \eqref{eq:distrib.sol} for sufficiently regular $f$. See e.g.\
\cite{book:Ishikawa16} or \cite{Hao:Peng:Zhang19} for more details.

\subsection{The distance associated with the Dynamics}

To construct the suitable distance $d$, we start noticing that the Kalman rank condition [\textbf{K}] allows us to denote
\begin{equation}\label{eq:def_of_n}
n \,:= \, \min\{r \in \N \colon N \, = \, \rank \bigl[B,AB,\dots,A^{r-1}B\bigr]\}.
\end{equation}
Clearly, $n$ is in $\llbracket 1,N\rrbracket$, where $\llbracket \cdot, \cdot \rrbracket$ denotes the set of all the integers in the
interval, and $n=1$ if and only if $d=N$, i.e. if the dynamics is non-degenerate.\newline
As done in \cite{Lunardi97}, the space $\R^N$ will be decomposed with respect to the family of linear operators $B, AB,\dots,
A^{n-1}B$. We start defining the family $\{V_h\colon h\in \llbracket 1,n \rrbracket\}$ of subspaces of $\R^N$ through
\[
V_h \,  := \, \begin{cases}
            \text{Im} (B), & \mbox{if } h=1, \\
            \bigoplus_{k=1}^{h}\text{Im}(A^{k-1}B), & \mbox{otherwise}.
        \end{cases}\]
It is easy to notice that $V_h\neq V_k$ if $k\neq h$ and $V_1\subset V_2\subset\dots V_n=\R^N$. We can then construct iteratively the family
$\{E_h \colon h\in \llbracket 1,n \rrbracket\}$ of orthogonal projections from $\R^N$ as
\[E_h \,  := \,
        \begin{cases}
            \text{projection on } V_1, & \mbox{if } h=1; \\
            \text{projection on }(V_{h-1})^\perp \cap V_h, & \mbox{otherwise}.
        \end{cases}
\]
With a small abuse of notation, we will identify the projection operators $E_h$ with the corresponding matrixes in
$\R^N\otimes \R^N$. It is clear that $\dim E_1(\R^N)=d$. Let us then denote $d_1:=d$ and
\[d_h \,:= \, \dim E_h(\R^N), \quad \text{ for $h$}>1.\]

We can define now the distance $d$ through the decomposition $\R^N=\bigoplus_{h=1}^nE_h(\R^N)$ as
\[d(x,x') \,:= \, \sum_{h=1}^{n}\vert E_h(x-x')\vert^{\frac{1}{1+\alpha(h-1)}}.\]
The above distance can be seen as a generalization of the usual Euclidean distance when $n=1$ (non-degenerate dynamics) as well as an extension of the standard parabolic distance for $\alpha=2$. It is important to highlight that it does
not induce a norm since it lacks of linear homogeneity. \newline
The anisotropic distance $d$ can be understood direction-wise: we firstly fix a "direction" $h$ in $\llbracket
1,n\rrbracket$ and then calculate the standard Euclidean distance on the associated subspace $E_h(\R^N)$, but scaled according to the
dilation of the system in that direction. We conclude summing the contributions associated with each component. The choice of such a
dilation will be discussed thoroughly in the example at the end of this section.

As emphasized by the result from Lanconelli and Polidoro recalled below (cf. \cite{Lanconelli:Polidoro94}, Proposition $2.1$), the decomposition of
$\R^N$ with respect to the projections $\{E_h \colon h \in \llbracket 1,n\rrbracket\}$ determines a particular structure of the matrixes
$A$ and $B$. It will be often exploited in the following.

\begin{theorem}[\cite{Lanconelli:Polidoro94}]
\label{Thm:Lancon_Pol}
Let $\{e_i \colon i \in \llbracket 1,N\rrbracket\}$ be an orthonormal basis consisting of generators of $\{E_h(\R^N) \colon h\in \llbracket
1,n\rrbracket\}$. Then, the matrixes $A$ and $B$ have the following form:
\begin{equation}\label{eq:Lancon_Pol}
B \, = \,
    \begin{pmatrix}
        B_0    \\
        0      \\
        \vdots  \\
        0
    \end{pmatrix}
\,\, \text{ and } \,\, A \, = \,
    \begin{pmatrix}
        \ast   & \ast  & \dots  & \dots  & \ast   \\
         A_2   & \ast  & \ddots & \ddots  & \vdots   \\
        0      & A_3   & \ast  & \ddots & \vdots \\
        \vdots &\ddots & \ddots& \ddots & \ast \\
        0      & \dots & 0     & A_n    & \ast
    \end{pmatrix}
\end{equation}
where $B_0$ is a non-degenerate matrix in $\R^{d_1}\otimes \R^{d_1}$ and $A_h$ are matrixes in $\R^{d_h}\otimes \R^{d_{h-1}}$ with
$\text{rank}(A_h)=d_h$ for any $h$ in $\llbracket 2,n\rrbracket$. Moreover, $d_1\ge d_2\ge \dots\ge d_n\ge 1$.
\end{theorem}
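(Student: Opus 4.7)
The plan is to build the basis first and then read off the block structures of $A$ and $B$ from geometric properties of the filtration $V_1 \subset V_2 \subset \cdots \subset V_n = \R^N$. By construction each $E_h(\R^N)$ lies in $V_{h-1}^\perp$, so the subspaces $E_1(\R^N), \ldots, E_n(\R^N)$ are pairwise orthogonal and sum to $V_n = \R^N$; concatenating orthonormal bases of each $E_h(\R^N)$ therefore yields an orthonormal basis $\{e_i \colon i \in \llbracket 1,N\rrbracket\}$ of $\R^N$ in which every $N \times N$ matrix acquires an $n \times n$ block structure with blocks of sizes $d_h \times d_k$. The claim on $B$ is then immediate: since $\text{Im}(B) = V_1 = E_1(\R^N)$, only the top row block of $B$ can be non-zero, yielding the stacked form in \eqref{eq:Lancon_Pol}; as $d_1 = d$ by definition and $\text{rank}(B) = d$, the top block $B_0$ is a non-degenerate square matrix.

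For the block Hessenberg form of $A$, I would first establish $A V_h \subset V_{h+1}$ for $h \in \llbracket 1, n-1 \rrbracket$, which follows directly from the definition of $V_h$ since $A V_h = \sum_{k=1}^h \text{Im}(A^k B) \subset V_{h+1}$. Because $E_k(\R^N) \subset V_k$, this forces the $k$-th column block of $A$ to take values in $V_{k+1} = \bigoplus_{j \leq k+1} E_j(\R^N)$, so every $(h,k)$-block with $h \geq k+2$ vanishes, giving exactly the pattern displayed in \eqref{eq:Lancon_Pol}.

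The crucial step is to show that the subdiagonal block $A_h := E_h A|_{E_{h-1}(\R^N)}$ has full rank $d_h$ for $h \in \llbracket 2, n\rrbracket$. The identity I would rely on is
\[
V_h \, = \, V_{h-1} + A V_{h-1},
\]
which holds because $V_h = V_{h-1} + \text{Im}(A^{h-1} B) = V_{h-1} + A(\text{Im}(A^{h-2} B)) \subset V_{h-1} + A V_{h-1}$, with the reverse inclusion already known. Applying $E_h$ and using $E_h V_{h-1} = 0$ (since $V_{h-1} \perp E_h(\R^N)$) gives $E_h(\R^N) = E_h A V_{h-1}$. Moreover, for $k \leq h-2$ one has $A E_k(\R^N) \subset V_{k+1} \subset V_{h-1}$, hence $E_h A E_k(\R^N) = 0$; only the $k = h-1$ component survives, producing $E_h(\R^N) = E_h A E_{h-1}(\R^N)$. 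This exactly says that $A_h$ is surjective onto the $d_h$-dimensional space $E_h(\R^N)$, which forces both $\text{rank}(A_h) = d_h$ and the monotonicity $d_{h-1} \geq d_h$. The strict positivity $d_h \geq 1$ for every $h \in \llbracket 1, n \rrbracket$ follows from the minimality of $n$ in \eqref{eq:def_of_n}: if $E_h(\R^N) = \{0\}$ for some $h$, then $V_h = V_{h-1}$ and, using $A V_{h-1} \subset V_h = V_{h-1}$, the chain would stabilize at $h-1 < n$, contradicting the choice of $n$.

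The main obstacle, I expect, is conceptual rather than computational: one must resist conflating the valid statement "the $k$-th column block of $A$ takes values in $V_{k+1}$" with the stronger (but false in general) claim that it takes values in $E_{k+1}(\R^N)$; only the former holds, and this is exactly why the pattern of $A$ is Hessenberg rather than bi-diagonal. Once the identity $V_h = V_{h-1} + A V_{h-1}$ and the projection argument above are isolated, the rank assertion on $A_h$ and all remaining claims follow by elementary bookkeeping.
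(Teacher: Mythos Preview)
Your proof is correct and complete. Note, however, that the paper itself does not supply a proof of this statement: it is quoted from \cite{Lanconelli:Polidoro94}, Proposition~2.1, and used as a black box, so there is no in-paper argument to compare against.

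Your approach is the standard one and matches the spirit of the original reference: exploit the filtration $V_1\subset\cdots\subset V_n$ and the key inclusion $AV_h\subset V_{h+1}$ to get the block-Hessenberg pattern, then use the identity $V_h=V_{h-1}+AV_{h-1}$ together with $E_hV_{h-1}=0$ and $E_hAE_k(\R^N)=0$ for $k\le h-2$ to isolate the surjectivity of $A_h$. Each step is justified, the rank and monotonicity claims follow cleanly, and your stabilization argument for $d_h\ge 1$ via minimality of $n$ is sound. The closing remark distinguishing ``values in $V_{k+1}$'' from ``values in $E_{k+1}(\R^N)$'' is a good diagnostic observation, though not strictly part of the proof.
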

Applying a change of variables if necessary, we will assume from this point further to have fixed such a canonical basis $\{e_i \colon i \in
\llbracket1,N\rrbracket\}$. For notational simplicity, we denote by $I_h$, $h \in \llbracket 1,n\rrbracket$, the family of indexes $i$ in
$\llbracket 1,N\rrbracket$ such that $\{e_i\colon i\in I_h\}$ spans $E_h(\R^N)$.

The particular structure of $A$ and $B$ given by Theorem \ref{Thm:Lancon_Pol} allows us to decompose accurately the
exponential $e^{tA}$ of the matrix $A$ in order to make the intrinsic scale of the system appear. Further on, we will consider fixed a time-dependent matrix $\mathbb{M}_t$ on $\R^N\otimes \R^N$ given by
\[\mathbb{M}_t := \text{diag}(I_{d_1\times d_1},tI_{d_2\times d_2},\dots,t^{n-1}I_{d_{n}\times d_{n}}), \quad t\ge0.\]

\begin{lemma}
\label{lemma:Decomposition_exp_A}
There exists a time-dependent matrix $\{R_t\colon t\in[0,1]\}$ in $\R^N\otimes \R^N$ such that
 \begin{equation}\label{eq:Decomposition_exp_A}
  e^{tA}\mathbb{M}_t \,   = \mathbb{M}_tR_t, \quad t \in [0,1].
 \end{equation}
Moreover, there exists a constant $C>0$ such that for any $t$ in $[0,1]$,
\begin{itemize}
  \item any $l,h$ in $\llbracket 1,n \rrbracket$ and any $\theta$ in $\mathbb{S}^{N-1}$, it holds that
  \[\bigl{\vert} E_le^{tA}E_h \theta\bigr{\vert} \, \le \, \begin{cases}
                                                      Ct^{l-h}, & \mbox{if } l\ge h \\
                                                      Ct, & \mbox{if } l< h.
                                                    \end{cases}\]
  \item any $\theta$ in $\mathbb{S}^{d-1}$, it holds that
  \[\bigl{\vert} R_tB\theta \bigr{\vert} \, \ge \, C^{-1}.\]
\end{itemize}
\end{lemma}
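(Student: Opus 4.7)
The plan is to define
\[R_t \, := \, e^{N_t}, \qquad N_t \, := \, t\,\mathbb{M}_t^{-1}A\,\mathbb{M}_t,\]
which is equivalent, for $t > 0$, to $R_t = \mathbb{M}_t^{-1}e^{tA}\mathbb{M}_t$, so that \eqref{eq:Decomposition_exp_A} holds by construction. Using the block form of $A$ from Theorem \ref{Thm:Lancon_Pol}, the $(l,h)$ block of $N_t$ equals $t^{h-l+1}A_{l,h}$ when $h \ge l-1$ and vanishes when $h \le l-2$. Since the exponent $h-l+1$ is always nonnegative on this support (being exactly zero on the subdiagonal, where $N_t$ carries the constant blocks $A_l$, and at least $2$ strictly above the diagonal), $N_t$ is a matrix polynomial in $t$ and $R_t$ extends smoothly to $t = 0$. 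The identity \eqref{eq:Decomposition_exp_A} still holds at $t = 0$ because $N_0$ is strictly subdiagonal, so $E_1N_0 = 0$ and thus $\mathbb{M}_0R_0 = E_1e^{N_0} = E_1 = e^{0\cdot A}\mathbb{M}_0$.

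For the first bullet, the identities $e^{tA} = \mathbb{M}_tR_t\mathbb{M}_t^{-1}$ together with $E_l\mathbb{M}_t = t^{l-1}E_l$ and $\mathbb{M}_t^{-1}E_h = t^{-(h-1)}E_h$ give $E_le^{tA}E_h = t^{l-h}E_lR_tE_h$, so the two cases of the bullet reduce to showing
\[\Vert E_lR_tE_h\Vert \, \le \, C \text{ for } l \ge h, \qquad \Vert E_lR_tE_h\Vert \, \le \, Ct^{h-l+1} \text{ for } l < h, \qquad t \in [0,1].\]
To prove this, I would expand $(N_t^k)_{l,h}$ as a sum over block-paths $l = i_0, i_1, \ldots, i_k = h$ obeying $i_j \ge i_{j-1}-1$ (the non-vanishing support of $N_t$). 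The crucial combinatorial observation is that \emph{every} such path contributes exactly $t^{k+h-l}$ times a product of bounded blocks of $A$: a down-step ($i_j = i_{j-1}-1$) carries $t^0$, a step with $i_j \ge i_{j-1}$ carries $t^{i_j-i_{j-1}+1}$, and the dependence on the intermediate $i_j$'s telescopes since $\sum_j(i_j-i_{j-1}) = h - l$. With at most $n^{k-1}$ paths of length $k \ge 1$, this yields $\Vert(N_t^k)_{l,h}\Vert \le (Cn)^kt^{k+h-l}$; summing over $k$ starting from $k_{\min} = l-h$, $0$, or $1$ according as $l > h$, $l = h$, or $l < h$, and using the uniform convergence of $\sum_k(Cnt)^k/k!$ on $[0,1]$, produces the two claimed estimates.

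For the second bullet, $R_t$ is invertible for every $t \in [0,1]$ as the exponential of a real matrix, while $B\theta \ne 0$ for $\theta \in \mathbb{S}^{d-1}$ since $\rank(B) = d$. The map $(t,\theta) \mapsto |R_tB\theta|$ is therefore continuous (as $N_t$ is polynomial in $t$) and strictly positive on the compact set $[0,1] \times \mathbb{S}^{d-1}$, so it admits a positive lower bound $C^{-1}$. The main delicate step in this plan is the combinatorial identity giving $t^{k+h-l}$ as the total $t$-order of a non-vanishing block-path of length $k$ from $l$ to $h$; once it is in hand, the rest of the argument is routine.
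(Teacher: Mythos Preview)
Your argument is correct and takes a somewhat different route from the paper in both parts.

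For the first bullet the paper works directly with the exponential series for $e^{tA}$: the block structure of $A$ forces $E_lA^kE_h=0$ whenever $k<l-h$, so the sum starts at $k=l-h$ and one reads off $\lvert E_le^{tA}E_h\theta\rvert\le Ct^{l-h}$ immediately; the case $l<h$ uses only that $E_lIE_h=0$, so the sum starts at $k=1$. This is shorter than your detour through block-paths in $N_t$, though your route has the side benefit of showing that $R_t$ extends analytically to $t=0$ (the paper simply sets $R_0:=I$ by convention and does not address continuity there; in fact your $R_0=e^{N_0}$ is the genuine limit of $\mathbb{M}_t^{-1}e^{tA}\mathbb{M}_t$ and differs from the identity in general).

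For the second bullet the paper projects onto $E_1$ and invokes the identity $E_1A^kE_1=(E_1AE_1)^k$ to reduce to $\lvert e^{tA_{1,1}}B_0\theta\rvert$. That identity is \emph{not} valid when the upper blocks of $A$ are nonzero (for instance $n=2$, $d=1$, $A=\bigl(\begin{smallmatrix}0&-2\\2&0\end{smallmatrix}\bigr)$ gives $(e^{tA})_{1,1}=\cos(2t)$, which vanishes at $t=\pi/4\in[0,1]$, whereas $e^{tA_{1,1}}=1$). Your compactness argument---$R_t=e^{N_t}$ is invertible for every $t$, $B\theta\ne0$, and $(t,\theta)\mapsto\lvert R_tB\theta\rvert$ is continuous on the compact set $[0,1]\times\mathbb{S}^{d-1}$---is both simpler and avoids this issue entirely.
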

\begin{proof}
By definition of the matrix exponential, we know that
\begin{equation}\label{Proof:eq:def_Exponential}
E_le^{tA}E_h  \, = \, \sum_{k=0}^{\infty}\frac{t^k}{k!}E_lA^kE_h.
\end{equation}
Using now the representation of $A$ given by Theorem \ref{Thm:Lancon_Pol}, it is easy to check that $E_lA^kE_h=0$ for $k<l-h$  (when $l-h$
is non-negative). Thus, for $l\ge h$, it holds that
\[\bigl{\vert}E_le^{tA}E_h\theta\bigr{\vert}  \, = \, \bigl{\vert}\sum_{k=l-h}^{\infty}\frac{t^k}{k!}E_lA^kE_h\theta\bigr{\vert} \,
\le \, Ct^{l-h},\]
where we exploited that $t$ is in $ [0,1]$ and $\vert \theta \vert =1$. Assuming instead that $l<h$, it is clear that $E_lI_{N\times N}E_h$
vanishes. We can then write that
\[\bigl{\vert}E_le^{tA}E_h\theta\bigr{\vert}  \, = \, \bigl{\vert}\sum_{k=l}^{\infty}\frac{t^k}{k!}E_lA^kE_h\theta\bigr{\vert} \, \le \,
Ct,\]
using again that $t$ is in $ [0,1]$ and $\vert \theta \vert =1$.\newline
To show the other control, we highlight that the matrix $\mathbb{M}_t$ is not invertible in $t=0$ and for this reason, we
define the time-dependent matrix $R_t$ as
\[R_t \, := \, \begin{cases}
                 I_{N\times N}, & \mbox{if } t=0; \\
                 \mathbb{M}^{-1}_te^{tA}\mathbb{M}_t, & \mbox{if } t \in (0,1].
               \end{cases}\]
We could have also defined $R_t:=\bigr(\tilde{R}^t_s\bigr)_{|s=1}$ where $\tilde{R}^t_s$ solves the following ODE:
\[\begin{cases}
    \partial_s\tilde{R}^t_s \, = \, \mathbb{M}^{-1}_ttA\mathbb{M}_t\tilde{R}^t_s, \quad \text{on }(0,1], \\
    \tilde{R}^t_0 \, = \, I_{N\times N}.
  \end{cases}\]
Equivalently, $\tilde{R}^t_s$ is the resolvent matrix associated with $\mathbb{M}^{-1}_ttA\mathbb{M}_t$, whose sub-diagonal entries are
"macroscopic" from the structure of $A$ and $\mathbb{M}_t$. \newline
It follows immediately that Equation \eqref{eq:Decomposition_exp_A} holds.  Moreover, we notice that
\[\bigl{\vert} R_tB\theta\bigr{\vert} \, \ge \,\bigl{\vert} E_1R_tB\theta\bigr{\vert} \, = \, \bigl{\vert} E_1e^{tA}E_1B\theta\bigr{\vert}.\]
Remembering the definition of matrix exponential (Equation \eqref{Proof:eq:def_Exponential} with $l=h=1$), we use now that
\[E_1A^kE_1 \, = \, (E_1AE_1)^k \, = \, (A_{1,1})^kE_1,\]
where in the last expression the multiplication is meant block-wise, in order to conclude that
\[\bigl{\vert} R_t B\theta \bigr{\vert} \, \ge \,\bigl{\vert} e^{tA_{1,1}}B_0\theta\bigr{\vert}.\]
Using that $e^{tA_{1,1}}B_0$ is non-degenerate and continuous in time and that $\theta$ is in $\mathbb{S}^{d-1}$, it is easy to conclude.
\end{proof}

We conclude this sub-section with a simpler example taken from \cite{Huang:Menozzi:Priola19}. We hope that it will help the reader to
understand the introduction of the anisotropic distance $d$.
\begin{example}\label{example_basic}
Fixed $N=2d$, $n=2$ and $d=d_1=d_2$, we consider the following operator:
\[\mathcal{L}^{ou}_\alpha \, = \, \Delta^{\frac{\alpha}{2}}_{x_1}+x_1\cdot\nabla_{x_2} \quad \text{ on }\R^{2d},\]
where $(x_1,x_2)\in\R^{2d}$ and $\Delta^{\frac{\alpha}{2}}_{x_1}$ is the fractional Laplacian with respect to $x_1$. In our framework, it is associated with the matrixes
\[A \, := \,
    \begin{pmatrix}
               0 & 0 \\
               I_{d\times d} & 0
    \end{pmatrix} \,\, \text{ and } \,\, B \, := \,
    \begin{pmatrix}
                I_{d\times d} \\
                 0
    \end{pmatrix}.
             \]
The operator $\mathcal{L}^{ou}_\alpha$ can be seen as a generalization of the classical Kolmogorov example (see e.g. \cite{Kolmogorov34}) to
our non-local setting. \newline
In order to understand how the system typically behaves, we search for a dilation
\[\delta_\lambda\colon [0,\infty)\times \R^{2d} \to [0,\infty)\times \R^{2d}\]
which is invariant for the considered dynamics, i.e.\ a dilation that transforms solutions of the equation
\[\partial_tu(t,x)-\mathcal{L}^{ou}_\alpha u(t,x) \, = \, 0 \quad \text{ on }(0,\infty)\times\R^{2d}\]
into other solutions of the same equation.\newline
Due to the structure of $A$ and the $\alpha$-stability of $\Delta^{\frac{\alpha}{2}}$, we can consider for any fixed $\lambda>0$, the
following
\[ \delta_\lambda(t,x_1,x_2) := (\lambda^\alpha t,\lambda x_1,\lambda^{1+\alpha}x_2).\]
It then holds that
\[\bigl(\partial_t -\mathcal{L}^{ou}_\alpha\bigr) u = 0 \, \Longrightarrow \bigl(\partial_t -\mathcal{L}^{ou}_\alpha \bigr)(u \circ
\delta_\lambda) = 0.\]
Introducing now the complete time-space distance $d_P$ on $[0,\infty)\times \R^{2d}$ given by
\begin{equation}\label{Definition_distance_d_P}
d_P\bigl((t,x),(s,x')\bigr)  \, := \, \vert s-t\vert^\frac{1}{\alpha}+ d(x,x'),
\end{equation}
we notice that it is homogenous with respect to the dilation $\delta_\lambda$,
so that
\[d_P\bigl(\delta_\lambda(t,x);\delta_\lambda(s,x')\bigr) = \lambda d_P\bigl((t,x);(s,x')\bigr).\]
Precisely, the exponents appearing in Equation \eqref{Definition_distance_d_P} are those which make each space-component homogeneous to
the characteristic time scale $t^{1/\alpha}$. From a more probabilistic point of view, the exponents in Equation
\eqref{Definition_distance_d_P}, can be related to the characteristic time scales of the iterated integrals of an $\alpha$-stable process.
It can be easily seen from the example, noticing that the operator $\mathcal{L}^{ou}_\alpha$ corresponds to the generator of an isotropic $\alpha$-stable
process and its time integral. \newline
Going back to the general setting, the appearance of this kind of phenomena is due essentially to the particular structure of the
matrix $A$ (cf. Theorem \ref{Thm:Lancon_Pol}) that allows the smoothing effect of the operator $\mathcal{L}$, acting only on the first "component" given by $B_0$, to propagate into the system.
\end{example}

\subsection{Anisotropic Zygmund-H\"older spaces}
We are now ready to define the Zygmund-H\"older spaces $C^{\gamma}_{b,d}(\R^N)$ with respect to the distance $d$. We start recalling some useful
notations we will need below.\newline
Given a function $f\colon \R^N\to \R$, we denote by $Df(x)$, $D^2f(x)$ and $D^3f(x)$ the first, second and third Fr\'echet derivative of $f$
at a point $x$ in $\R^N$ respectively, when they exist. For simplicity, we will identify $D^3f(x)$ as a $3$-tensor so that $[D^3f(x)](u,v)$
is a vector in $R^N$ for any $u,v$ in $\R^N$. Moreover, fixed $h$ in $\llbracket 1, n\rrbracket$, we will denote by $D_{E_h}f(x)$ the
gradient of $f$ at $x$ along the direction $E_h(\R^N)$. Namely,
\[D_{E_h}f(x) \, \, := \, \, E_h Df(x).\]
A similar notation will be used for the higher derivatives, too.\newline
Given $X,Y$ two real Banach spaces, $\mathcal{L}(X,Y)$ will represent the family of linear continuous operators between $X$ and $Y$. \newline
In the following, $c$ or $C$ denote generic \emph{positive} constants whose precise value is unimportant. They may change from line
to line and they will depend only on the parameters given by the model and assumptions [\textbf{ND}], [\textbf{K}]. Namely,
$d,N,A,B,\alpha,\nu,r_0$ and $\mu$. Other dependencies that may occur will be explicitly specified.

Let us introduce now some function spaces we are going to use. We denote by $B_b(\R^N)$ the family of Borel measurable and bounded
functions $f\colon \R^N\to \R$. It is a Banach space endowed with the supremum norm $\Vert \cdot \Vert_\infty$. We will consider also its
closed subspace $C_b(\R^N)$ consisting of all the uniformly continuous functions. \newline
Fixed some $k$ in $\N_0:=\N\cup\{0\}$ and $\beta$ in $(0,1]$, we follow Lunardi \cite{Lunardi97} denoting the Zygmund-H\"older semi-norm for a function $\phi\colon \R^N\to \R$ as
\[[\phi]_{C^{k+\beta}} \, := \,
\begin{cases}
    \sup_{\vert \vartheta \vert= k}\sup_{x\neq y}\frac{\vert D^\vartheta\phi(x)-D^\vartheta\phi(y)\vert}{\vert x-y\vert^\beta} , & \mbox{if
    }\beta \neq 1; \\
    \sup_{\vert \vartheta \vert= k}\sup_{x\neq y}\frac{\bigl{\vert}D^\vartheta\phi(x)+D^\vartheta\phi(y)-2D^\vartheta\phi(\frac{x+y}{2})
    \bigr{\vert}}{\vert x-y \vert}, & \mbox{if } \beta =1.
                             \end{cases}\]
Consequently, The Zygmund-H\"older space $C^{k+\beta}_b(\R^N)$ is the family of functions $\phi\colon \R^N
\to\R$ such that $\phi$ and its derivatives up to order $k$ are continuous and the norm
\[\Vert \phi \Vert_{C^{k+\beta}_b} \,:=\, \sum_{i=1}^{k}\sup_{\vert\vartheta\vert = i}\Vert D^\vartheta\phi
\Vert_{L^\infty}+[\phi]_{C^{k+\beta}_b} \,\text{ is finite.}\]

We can define now the anisotropic Zygmund-H\"older spaces associated with the distance $d$. Fixed $\gamma>0$, the space $C^{\gamma}_{b,d}(\R^N)$ is
the family of functions $\phi\colon \R^N\to \R$ such that for any $h$ in $\llbracket 1,n\rrbracket$ and any $x_0$ in $\R^N$, the function
\[z\in  E_h(\R^N)\,  \to \, \phi(x_0+z) \in \R \,\text{ belongs to }C^{\gamma/(1+\alpha(h-1))}_b\bigl(E_h(\R^N)\bigr),\]
with a norm bounded by a constant independent from $x_0$. It is endowed with the norm
\begin{equation}\label{eq:def_anistotropic_norm}
\Vert\phi\Vert_{C^{\gamma}_{b,d}} \,:=\,\sum_{h=1}^{n}\sup_{x_0\in \R^N}\Vert\phi(x_0+\cdot)\Vert_{C^{\gamma/(1+\alpha(h-1))}_b}.
\end{equation}

We highlight that it is possible to recover the expected joint regularity for the partial derivatives, when
they exist, as in the standard H\"older spaces. In such a case, they actually turn out to be H\"older continuous with respect to the
distance $d$ with order one less than the function (See Lemma $2.1$ in \cite{Lunardi97} for more details).

It will be convenient in the following to consider an equivalent norm in the "standard" H\"older-Zygmund spaces
$C^\gamma_b(E_h(\R^N))$ that does not take into account the derivatives with respect to the different directions.
We suggest the interested reader to see \cite{Lunardi97}, Equation $(2.2)$ or \cite{Priola09} Lemma $2.1$ for further details.

\begin{lemma}
\label{lemma:def_equivalent_norm}
Fixed $\gamma$ in $(0,3)$ and $h$ in $\llbracket 1,n\rrbracket$ and $\phi$ in $C_b(E_h(\R^N))$, let us introduce
\begin{equation}\label{eq:def_equivalent_norm}
\Delta^3_{x_0}\phi(z) \, := \, \phi(x_0+3z)-3\phi(x_0+2z)+ 3\phi(x_0+z)-\phi(x_0), \,\quad x_0 \in \R^N; \, z \in E_h(\R^N).
\end{equation}
 Then, $\phi$ is in $C^\gamma_{b}(E_h(\R^N))$ if and only if
\[\sup_{x_0 \in \R^N}\sup_{z \in E_h(\R^N);z\neq 0}\frac{\bigl{\vert} \Delta^3_{x_0}\phi(z)\bigr{\vert}}{\vert z \vert^\gamma} \, < \,
\infty.\]
\end{lemma}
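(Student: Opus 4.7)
My plan is to prove the two implications separately. The pivotal algebraic observation is that the third-order difference operator $\Delta^3_{x_0}$ annihilates every polynomial in $z$ of degree at most $2$: for $P(z) = a + b\cdot z + c(z, z)$, one checks directly that $P(3z) - 3P(2z) + 3P(z) - P(0) = 0$, which reduces to the elementary identities $\sum_{k=0}^{3}(-1)^{3-k}\binom{3}{k}k^{j} = 0$ for $j \in \{0, 1, 2\}$. This cancellation is precisely what allows a third-order difference to probe regularity of any order strictly less than $3$.

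For the forward implication, I assume $\phi \in C^\gamma_b(E_h(\R^N))$ and set $k := \lfloor \gamma \rfloor \in \{0, 1, 2\}$. I would Taylor-expand $\phi$ at $x_0$ along $E_h(\R^N)$,
\[
\phi(x_0 + \zeta) \, = \, \sum_{\vert\vartheta\vert \le k} \frac{1}{\vartheta!} D^\vartheta\phi(x_0)\zeta^\vartheta + R_k(x_0, \zeta), \quad \zeta \in E_h(\R^N),
\]
with a remainder $\vert R_k(x_0, \zeta)\vert \le C \Vert\phi\Vert_{C^\gamma_b}\vert\zeta\vert^\gamma$. In the Zygmund cases $\gamma \in \{1, 2\}$, the analogous bound is obtained by rewriting $\Delta^3_{x_0}\phi(z)$ as a combination of symmetric second differences: for $\gamma = 1$, via the telescopic identity $\Delta^3_{x_0}\phi(z) = [\phi(x_0+3z) + \phi(x_0+z) - 2\phi(x_0+2z)] - [\phi(x_0+2z) + \phi(x_0) - 2\phi(x_0+z)]$, each bracket being a symmetric second difference of $\phi$ controlled by the Zygmund seminorm times $\vert z\vert$; for $\gamma = 2$, by bringing the first derivative inside an integral representation and controlling a symmetric second difference of $D\phi$, yielding a gain of $\vert z\vert^2$. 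In all cases, substituting $\zeta = j z$ for $j \in \{0, 1, 2, 3\}$ into $\Delta^3_{x_0}\phi(z)$, the polynomial part cancels by the annihilation property, and the four remainders combine, via the triangle inequality, into the announced bound $C\vert z\vert^\gamma$ uniformly in $x_0$ and $z$.

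For the reverse implication, I would use a Littlewood--Paley-type mollification. Fix a smooth, compactly supported kernel $\eta$ on $E_h(\R^N)$ with $\int \eta = 1$ and vanishing first two moments, set $\eta_t(\cdot) := t^{-d_h}\eta(\cdot / t)$, and exploit the hypothesis $\vert \Delta^3_{x_0}\phi(z)\vert \le C \vert z\vert^\gamma$ together with the cancellations of $\eta$ to derive the scaling bounds $\Vert D^m(\phi * \eta_t)\Vert_\infty \le C\, t^{\gamma - m}$ for $m \in \{0, 1, 2\}$. A dyadic telescoping $\phi = \phi * \eta_1 + \sum_{j \ge 0}(\phi * \eta_{2^{-(j+1)}} - \phi * \eta_{2^{-j}})$ then reconstructs the sought H\"older or Zygmund seminorm of $\phi$, the series converging exactly because $\gamma < 3$. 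The main obstacle of the proof lies precisely here, in the integer cases $\gamma \in \{1, 2\}$, where the target is the symmetric Zygmund seminorm rather than an ordinary H\"older one and where the mollification bookkeeping has to be done with care; since the equivalence is classical and proved in detail in \cite{Lunardi97}, Equation $(2.2)$, and \cite{Priola09}, Lemma $2.1$, the simplest route is to adapt those arguments essentially verbatim.
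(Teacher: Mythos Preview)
Your proposal is correct and, in fact, more detailed than what the paper does: the paper states this lemma without proof, simply referring to \cite{Lunardi97}, Equation~$(2.2)$, and \cite{Priola09}, Lemma~$2.1$ --- the same references you invoke at the end. Your sketch of the forward implication via Taylor expansion plus the polynomial annihilation property, and of the reverse implication via mollification with vanishing moments and dyadic telescoping, is the standard route found in those sources, so there is nothing to correct or compare.
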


We conclude this subsection with a result concerning the interpolation between the anisotropic
Zygmund-H\"older spaces $C^{\gamma}_{b,d}(\R^N)$. We refer to  Theorem $2.2$ and Corollary $2.3$ in \cite{Lunardi97} for details.

\begin{theorem}\label{Thm:Interpolation}
Let $r$ be in $(0,1)$ and $\beta$, $\gamma$ in $[0,\infty)$ such that $\beta\le \gamma$. Then, it holds that
\[\bigl(C^\beta_{b,d}(\R^N),C^\gamma_{b,d}(\R^N)\bigr)_{r,\infty} \, = \, C^{r\gamma+(1-r)\beta}_{b,d}(\R^N)\]
with equivalent norms, where we have denoted for simplicity: $C^0_{b,d}(\R^N):=C_b(\R^N)$.
\end{theorem}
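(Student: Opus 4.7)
The plan is to establish both inclusions via a direct computation of the K-functional
\[K(t,\phi)\, :=\, \inf_{\phi=\phi_0+\phi_1}\bigl\{\Vert \phi_0\Vert_{C^\beta_{b,d}}+t\Vert \phi_1\Vert_{C^\gamma_{b,d}}\bigr\}, \quad t>0,\]
reducing matters to the classical real interpolation theorem for (isotropic) Zygmund-H\"older spaces applied along each of the subspaces $E_h(\R^N)$, with all scales matched to the intrinsic anisotropy of the distance $d$.

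For the inclusion $C^{r\gamma+(1-r)\beta}_{b,d}\hookrightarrow \bigl(C^\beta_{b,d},C^\gamma_{b,d}\bigr)_{r,\infty}$, fix $\phi\in C^{r\gamma+(1-r)\beta}_{b,d}(\R^N)$ and introduce, for each $\epsilon>0$, the decomposition $\phi=\phi_0^\epsilon+\phi_1^\epsilon$ with $\phi_1^\epsilon:=\phi\ast\rho_\epsilon$ obtained via convolution with an \emph{anisotropic} mollifier $\rho_\epsilon$ whose scale is $\epsilon^{1+\alpha(h-1)}$ along each component $E_h(\R^N)$. Since the seminorm in \eqref{eq:def_anistotropic_norm} tests $\phi$ only along the individual planes $x_0+E_h(\R^N)$, standard Bernstein-type estimates in each direction give
\[\Vert\phi_0^\epsilon\Vert_{C^\beta_{b,d}}\le C\,\epsilon^{r(\gamma-\beta)}\Vert\phi\Vert_{C^{r\gamma+(1-r)\beta}_{b,d}}, \qquad \Vert\phi_1^\epsilon\Vert_{C^\gamma_{b,d}}\le C\,\epsilon^{-(1-r)(\gamma-\beta)}\Vert\phi\Vert_{C^{r\gamma+(1-r)\beta}_{b,d}}.\]
The optimal choice $\epsilon=t^{1/(\gamma-\beta)}$ then yields $K(t,\phi)\le C t^r\Vert \phi\Vert_{C^{r\gamma+(1-r)\beta}_{b,d}}$ uniformly in $t>0$, which is precisely the $(r,\infty)$-interpolation bound.

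For the reverse inclusion, use the equivalent characterization of the anisotropic H\"older norm via the third-order difference $\Delta^3_{x_0}$ of Lemma \ref{lemma:def_equivalent_norm} (together with its higher-order analogues if $\gamma\ge 3$), applied direction-wise. For each $h\in\llbracket 1,n\rrbracket$, $x_0\in\R^N$, $z\in E_h(\R^N)$ and \emph{any} admissible splitting $\phi=\phi_0+\phi_1$, the triangle inequality combined with the directional Zygmund bounds gives
\[\bigl\vert\Delta^3_{x_0}\phi(z)\bigr\vert \, \le\, C\bigl(\vert z\vert^{\beta/(1+\alpha(h-1))}\Vert\phi_0\Vert_{C^\beta_{b,d}}+\vert z\vert^{\gamma/(1+\alpha(h-1))}\Vert\phi_1\Vert_{C^\gamma_{b,d}}\bigr).\]
Taking the infimum over splittings at the scale $t=\vert z\vert^{(\gamma-\beta)/(1+\alpha(h-1))}$ produces $\vert \Delta^3_{x_0}\phi(z)\vert \le C \vert z\vert^{(r\gamma+(1-r)\beta)/(1+\alpha(h-1))}\Vert \phi\Vert_{(C^\beta_{b,d},C^\gamma_{b,d})_{r,\infty}}$, so that summing over $h$ and invoking Lemma \ref{lemma:def_equivalent_norm} one last time completes the embedding.

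The main technical obstacle lies in the upper-bound step: one must simultaneously obtain the declared decay rates of $\Vert\phi_1^\epsilon\Vert_{C^\gamma_{b,d}}$ along \emph{every} subspace $E_h(\R^N)$ at its own intrinsic scale, without parasitic cross-directional contamination. A clean implementation takes $\rho_\epsilon$ as a tensor product of one-dimensional smooth bumps of unit mass, each rescaled to the intrinsic scale $\epsilon^{1+\alpha(h-1)}$ of its direction $E_h(\R^N)$. Convolution then commutes with translations, and, when restricted to the individual planes $x_0+E_h(\R^N)$, it reduces exactly to the scalar Bernstein estimates, from which the claimed bounds follow with the computation $\sigma-\gamma=-(1-r)(\gamma-\beta)$, $\sigma-\beta=r(\gamma-\beta)$ (where $\sigma:=r\gamma+(1-r)\beta$) being independent of the direction $h$.
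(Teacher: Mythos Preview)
The paper does not actually supply a proof of this theorem: it is stated without argument, with a reference to Theorem~2.2 and Corollary~2.3 of Lunardi~\cite{Lunardi97} for the details. So there is no ``paper's own proof'' to compare against; your proposal is an independent direct argument via the $K$-functional, which is a perfectly reasonable route.

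That said, one step in your forward inclusion deserves more care. You claim that with a tensor-product mollifier $\rho_\epsilon=\bigotimes_k\rho^{(k)}_{\epsilon_k}$, restriction of $\phi_0^\epsilon=\phi-\phi*\rho_\epsilon$ to a plane $x_0+E_h(\R^N)$ ``reduces exactly to the scalar Bernstein estimates''. This is true for $\phi_1^\epsilon$ (the averaging over $E_h^\perp$ only helps), but not quite for $\phi_0^\epsilon$: writing $\rho_\epsilon=\rho^{(h)}_{\epsilon_h}*\tilde\rho_\epsilon$ with $\tilde\rho_\epsilon$ the product over $k\neq h$, one picks up a cross-term
\[
\int_{E_h^\perp}\bigl[\phi(x_0+z)-\phi(x_0-y_\perp+z)\bigr]\,\tilde\rho_\epsilon(y_\perp)\,dy_\perp,\qquad z\in E_h(\R^N),
\]
whose $C^{\beta/(1+\alpha(h-1))}$-\emph{seminorm} along $E_h$ has no decay from Bernstein alone (each summand has $C^{\sigma_h}$-seminorm merely $O(1)$). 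The fix is to note that this term has $L^\infty$-norm $O(\epsilon^\sigma)$ from the transversal H\"older regularity of $\phi$, and then invoke the \emph{isotropic} interpolation inequality (the one recorded as~\eqref{eq:Interpolation_ineq}) on $E_h(\R^N)$ between $L^\infty$ and $C^{\sigma_h}$ to obtain the required $C^{\beta_h}$-bound $O(\epsilon^{\sigma-\beta})$. With this additional step your argument goes through; without it, the ``no parasitic cross-directional contamination'' claim is not justified.
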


\setcounter{equation}{0}
\section{Smoothing Effect for Truncated Density}

We present here some analytical properties of the semigroup generated by the operator $\mathcal{L}^{\text{ou}}$. Following
 \cite{Schilling:Sztonyk:Wang} and \cite{Schilling:Wang12}, we will show the existence of a smooth density for such a
semigroup and its anisotropic smoothing effect, at least for small times.

Throughout this section, we consider fixed a stochastic base $\bigl(\Omega,\mathcal{F},(\mathcal{F}_t)_{t\ge 0},\mathbb{P}\bigr)$ satisfying
the usual assumptions (see
\cite{book:Applebaum09}, page $72$). Let us then consider the (unique in law) L\'evy process $\{Z_t\}_{t\ge 0}$ on $\R^d$ characterized by
the L\'evy symbol
\[\Phi(p) \, = \, -ib\cdot p +\frac{1}{2}p\cdot Qp + \int_{\R^d_0}\bigl(1-e^{ip \cdot z}+ip\cdot z\mathds{1}_{B(0,1)}(z)\bigr) \, \nu(dz),
\quad p \in \R^d.\]
It is well-known by the L\'evy-Kitchine formula (see
\cite{book:Jacob05}), that the infinitesimal generator of the process $\{BZ_t\}_{t\ge 0}$ is then given by $\mathcal{L}$ on $\R^N$.\newline
Fixed $x$ in $\R^N$, we denote by $\{X_t\}_{t\ge 0}$ the $N$-dimensional Ornstein-Uhlenbeck process driven by $BZ_t$, i.e.\ the unique
(strong) solution of the following stochastic differential equation:
\[X_t \, = \, x + \int_{0}^{t}AX_s \, ds +BZ_t, \quad t\ge0, \,\, \mathbb{P}\text{-almost surely.}\]
By the variation of constants method, it is easy to check that
\begin{equation}\label{eq:Def_OU_Process}
X_t \, = \, e^{tA}x + \int_{0}^{t}e^{(t-s)A}B\, dZ_s, \quad \quad t\ge0, \,\, \mathbb{P}\text{-almost surely.}
\end{equation}
The \emph{transition semigroup} associated with $\mathcal{L}^{\text{ou}}$ is then defined as the family
$\{P_t\colon t \ge 0\}$ of linear contractions on $B_b(\R^N)$ given by
\begin{equation}\label{eq:Def_transition_semigroup}
P_t\phi(x) \, = \, \mathbb{E}\bigl[\phi(X_t)\bigr], \quad x \in \R^N,\, \phi \in B_b(\R^N).
\end{equation}
We recall that $P_t$ is generated by $\mathcal{L}^{\text{ou}}$ in the sense that its infinitesimal generator
$\mathcal{A}$ coincides with $\mathcal{L}^{\text{ou}}$ on $C^\infty_c(\R^N)$, the family of smooth functions with compact support.

The next result shows that the random part of $X_t$ (see Equation \eqref{eq:def_Lambda_t}) satisfies again the non-degeneracy assumption [\textbf{ND}], even if re-scaled
with respect to the anisotropic structure of the dynamics.

\begin{prop}[Decomposition]
\label{prop:Decomposition_Process_X}
For any $t$ in $(0,1]$, there exists a L\'evy process $\{S^t_u\}_{u\ge 0}$ such that
\[X_t \, \overset{law}{=} \, e^{tA}x +\mathbb{M}_t S^t_t.\]
Moreover, $\{S^t_u\}_{u\ge 0}$ satisfies assumption [\textbf{ND}] with same $\alpha$ as before.
\end{prop}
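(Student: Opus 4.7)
The strategy is to identify $S^t$ through its L\'evy symbol and then read off both the distributional identity and the non-degeneracy property. From the variation-of-constants representation \eqref{eq:Def_OU_Process}, the random part $Y_t := \int_0^t e^{(t-s)A} B\, dZ_s$ is a Wiener--L\'evy integral with deterministic integrand, hence infinitely divisible with
\[
\mathbb{E}\bigl[\exp(i\langle p, Y_t\rangle)\bigr] \, = \, \exp\Bigl(-\int_0^t \Phi\bigl(B^\ast e^{(t-s)A^\ast} p\bigr)\, ds\Bigr), \quad p \in \R^N.
\]
The aim is to rewrite this exponent as $-t \Psi^t(\mathbb{M}_t^\ast p)$ for some L\'evy symbol $\Psi^t$; the corresponding L\'evy process will then be $S^t$ and $\mathbb{M}_t^{-1} Y_t$ will have the law of $S^t_t$, giving at once the decomposition of the proposition.

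Executing this reduction is the first main step. I would successively substitute $u = t-s$ and $u = tv$ to obtain $-t\int_0^1 \Phi(B^\ast e^{tvA^\ast} p)\, dv$, and then exploit the identity
\[
\mathbb{M}_t^{-1} e^{tvA} B \, = \, \mathbb{M}_v R_{tv} B, \quad t \in (0,1],\, v \in [0,1],
\]
which is derived by writing $e^{tvA} = \mathbb{M}_{tv} R_{tv} \mathbb{M}_{tv}^{-1}$ (definition of $R_{tv}$ in Lemma \ref{lemma:Decomposition_exp_A}), noting that $\mathbb{M}_{tv}^{-1}$ restricts to the identity on $\mathrm{Im}\, B = E_1(\R^N)$, and computing $\mathbb{M}_t^{-1} \mathbb{M}_{tv} = \mathbb{M}_v$ block-wise. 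Setting $q := \mathbb{M}_t^\ast p$ then yields
\[
\Psi^t(q) \, := \, \int_0^1 \Phi\bigl(B^\ast R_{tv}^\ast \mathbb{M}_v^\ast q\bigr)\, dv,
\]
which is a genuine L\'evy symbol because the class of L\'evy triplets is a convex cone stable under positive integration in the parameter $v$. Declaring $\{S^t_u\}_{u \ge 0}$ to be the (unique in law) L\'evy process with symbol $\Psi^t$ then gives $S^t_t \stackrel{law}{=} \mathbb{M}_t^{-1} Y_t$ and hence the claimed distributional identity.

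For condition [\textbf{ND}] I would read the L\'evy measure of $S^t$ directly from $\Psi^t$, namely $\nu^t(C) = \int_0^1 (\sigma^t_v)_\ast \nu(C)\, dv$ with $\sigma^t_v := \mathbb{M}_v R_{tv} B$. Since $R_0 = I$ and $\mathbb{M}_0$ acts as the identity on $\mathrm{Im}\, B$, one has $\sigma^t_0 = B$; moreover $(t,v) \mapsto \sigma^t_v$ is continuous on $[0,1]^2$, and the lower bound $|E_1 R_{tv} B \theta| \ge C^{-1}$ from Lemma \ref{lemma:Decomposition_exp_A} persists under small perturbations of $v$. Thus $\sigma^t_v$ is uniformly non-degenerate for $v$ in some $[0,\delta]$ independently of $t \in (0,1]$. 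Restricting the $dv$-integral defining $\nu^t$ to $[0,\delta]$ and invoking [\textbf{ND}] for $\nu$ then transfers the $\alpha$-stable lower bound to $\nu^t$ with the same exponent $\alpha$, a suitably modified finite non-degenerate spherical measure and a new radius $r_0'$.

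The main obstacle is the algebraic reduction of the second paragraph: one must pick the correct time substitution and combine the definition of $R_{tv}$ with the block structures of $A$, $B$ and $\mathbb{M}_t$ to bring the characteristic exponent into a form compatible with the $\mathbb{M}_t$-rescaling, so that a bona fide L\'evy symbol $\Psi^t$ emerges. Once this reformulation is in place, both the distributional identity and the [\textbf{ND}] property reduce to routine Fourier-analytic and perturbative verifications.
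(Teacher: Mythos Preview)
Your approach coincides with the paper's: both identify $S^t$ through its L\'evy symbol, using exactly the factorization $e^{stA}B=\mathbb{M}_{st}R_{st}B=\mathbb{M}_s\mathbb{M}_tR_{st}B$ (your identity $\mathbb{M}_t^{-1}e^{tvA}B=\mathbb{M}_vR_{tv}B$ is simply the rearranged form), arrive at the symbol $\int_0^1\Phi\bigl((R_{st}B)^\ast\mathbb{M}_s\,\cdot\,\bigr)\,ds$, and then read off the push-forward L\'evy measure to verify [\textbf{ND}].

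The one difference worth noting is in the [\textbf{ND}] step. You argue perturbatively from $v=0$ and restrict the $dv$-integral to $[0,\delta]$, whereas the paper observes that $|\mathbb{M}_sR_{st}B\theta|\ge|E_1R_{st}B\theta|\ge C^{-1}$ holds uniformly for \emph{all} $s\in(0,1)$ by the proof of Lemma~\ref{lemma:Decomposition_exp_A}, so no restriction is needed. Your restriction is harmless but unnecessary. One point you should make explicit (the paper only says ``easy to check''): the non-degeneracy of the resulting spherical measure on $\mathbb{S}^{N-1}$, in the sense of \eqref{eq:non_deg_measure}, is not a consequence of the injectivity of $\sigma^t_v$ alone but requires the Kalman condition [\textbf{K}], since the image of each single $\sigma^t_v$ is only $d$-dimensional and it is the averaging in $v$ that makes the family $\{\mathbb{M}_vR_{tv}B\theta\}$ span $\R^N$.
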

\begin{proof}
For simplicity, we start denoting
\begin{equation}\label{eq:def_Lambda_t}
\Lambda_t \,:= \, \int_{0}^{t}e^{(t-s)A}B\, dZ_s,\quad t> 0,
\end{equation}
so that $X_t=e^{tA}x+\Lambda_t$. To conclude, we need to construct a L\'evy process $\{S^t_u\}_{u\ge 0}$ on $\R^N$ satisfying assumption [\textbf{ND}] and
\begin{equation}\label{eq:identity_in_law}
\Lambda_t \, \overset{law}{=} \, \mathbb{M}_tS_t^t.
\end{equation}
To show the identity in law, we are going to reason in terms of the characteristic functions. By Lemma $2.2$ in \cite{Schilling:Wang12}, we
know that $\Lambda_t$ is an infinitely divisible random variable with associated L\'evy symbol
\[\Phi_{\Lambda_t}(\xi) \, := \, \int_{0}^{t}\Phi\bigl((e^{sA}B)^*\xi\bigr)\,ds, \quad \xi \in \R^N.\]
Remembering the decomposition $e^{sA}B=e^{sA}\mathbb{M}_sB=\mathbb{M}_{s}R_{s}B$ from Lemma \ref{lemma:Decomposition_exp_A}, we can now
rewrite $\Phi_{\Lambda_t}$ as
\[\Phi_{\Lambda_t}(\xi) \, = \, t\int_{0}^{1}\Phi\bigl((e^{stA}B)^*\xi\bigr)\,ds\, = \,
t\int_{0}^{1}\Phi\bigl((R_{st}B)^*\mathbb{M}_s\mathbb{M}_t\xi\bigr)\,ds.\]
The above equality suggests us to define, for any fixed $t$ in $(0,1]$, the (unique in law) L\'evy process  $\{S^t_u\}_{u\ge0}$ associated with the L\'evy symbol
\[\tilde{\Phi}^t(\xi) \, := \, \int_{0}^{1}\Phi\bigl((R_{st}B)^*\mathbb{M}_s\xi\bigr)\,ds, \quad \xi \in \R^N.\]
It is not difficult to check that $\tilde{\Phi}^t$ is indeed a L\'evy symbol associated with the L\'evy triplet
$(\tilde{Q}^t,\tilde{b}^t,\tilde{\nu}^t)$ given by
\begin{align}
  \tilde{Q}^t \, &= \, \int_{0}^{1}\mathbb{M}_sR_{st}BQ(\mathbb{M}_sR_{st}B)^\ast \, ds; \\
  \tilde{b}^t \, = \, \int_{0}^{1}\mathbb{M}_sR_{st}Bb \, ds &+ \int_{0}^{1}\int_{\R^d}
  \mathbb{M}_sR_{st}Bz\bigl[\mathds{1}_{B(0,1)}(\mathbb{M}_sR_{st}Bz)-\mathds{1}_{B(0,1)}(z)\bigr]
  \, \nu(dz)ds;\\
  \tilde{\nu}^t(C) \, &= \, \int_{0}^{1} \nu\bigl((\mathbb{M}_sR_{st}B)^{-1}C\bigr)\, ds, \quad C \in \mathcal{B}(\R^d_0).
\end{align}
 Since we have that
\[\mathbb{E}\bigl[e^{i\langle \xi,\Lambda_t\rangle}\bigr] \, = \, e^{-\Phi_{\Lambda_t}(\xi)}\, = \, e^{-t\tilde{\Phi}^t(\mathbb{M}_t\xi)} \,
= \,\mathbb{E}\bigl[e^{i\langle \xi,\mathbb{M}_tS^t_t\rangle}\bigr],\]
it follows immediately that the identity \eqref{eq:identity_in_law} holds.\newline
It remains to show that the family of L\'evy measure $\{\tilde{\nu}^t\colon t \in (0,1]\}$ satisfies the assumption [\textbf{ND}].
Recalling that condition [\textbf{ND}] is assumed to hold for $\nu$, we know that
\begin{equation}\label{Proof:eq:Decomposition}
\tilde{\nu}^t(C) \, = \, \int_{0}^{1} \nu\bigl((\mathbb{M}_sR_{st}B)^{-1}C\bigr)\, ds \, \ge \,
\int_{0}^{1}\int_{0}^{r_0}\int_{\mathbb{S}^{d-1}} \mathds{1}_{C}(r\mathbb{M}_sR_{st}B\theta) \mu(d\theta)\frac{dr}{r^{1+\alpha}}ds,
\end{equation}
for any $C$ in $\mathcal{B}(\R^d_0)$. Furthermore, it holds from Lemma \ref{lemma:Decomposition_exp_A} that
\begin{equation}\label{Proof:eq:Decomposition2}
\inf_{s\in (0,1),\, t \in (0,1],\, \theta \in S^{d-1}}\bigl{\vert} \mathbb{M}_sR_{st}B\theta \bigr{\vert}\, =:\, R_0>0.
\end{equation}
It allows us to define two functions $l^t\colon [0,1]\times
S^{d-1}\to S^{N-1}$, $m^t\colon [0,1]\times S^{d-1}\to \R$, given by
\[l^t(s,\theta) \,:= \, \frac{\mathbb{M}_sR_{st}B\theta}{\vert \mathbb{M}_sR_{st}B\theta\vert} \,\, \text{ and } \,\,  m^t(s,\theta) \,:= \,
\vert \mathbb{M}_sR_{st}B\theta\vert.\]
Using the Fubini theorem, we can now rewrite Equation \eqref{Proof:eq:Decomposition} as
\[
\begin{split}
\tilde{\nu}^t(C) \, &\ge \,\int_{0}^{1}\int_{\mathbb{S}^{d-1}}\int_{0}^{r_0}\mathds{1}_{C}(l^t(s,\theta)m^t(s,\theta)r)
\frac{dr}{r^{1+\alpha}}\mu(d\theta)ds \\
&= \, \int_{0}^{1}\int_{\mathbb{S}^{d-1}} \int_{0}^{m^t(s,\theta)r_0}\mathds{1}_{C}(l^t(s,\theta)r)
\frac{dr}{r^{1+\alpha}}[m^t(s,\theta)]^\alpha\mu(d\theta)ds.
\end{split}
\]
Exploiting again Control \eqref{Proof:eq:Decomposition2}, we can conclude that
\begin{equation}\label{eq:ND_assumption_for_St}
\tilde{\nu}^t(C) \, \ge \, \int_{0}^{1}\int_{\mathbb{S}^{d-1}} \int_{0}^{R_0}\mathds{1}_{C}(l^t(s,\theta)r)
\frac{dr}{r^{1+\alpha}}\tilde{m}^t(ds,d\theta) \, = \, \int_{0}^{R_0}\int_{\mathbb{S}^{N-1}}
\mathds{1}_{C}(\tilde{\theta} r) \tilde{\mu}^t(d\tilde{\theta})\frac{dr}{r^{1+\alpha}},
\end{equation}
where $\tilde{m}^t(ds,d\theta)$ is a measure on $[0,1]\times S^{d-1}$ given by
\[\tilde{m}^t(ds,d\theta) \, := \, [m^t(s,\theta)]^{\alpha}\mu(d\theta)ds\]
and $\tilde{\mu}^t:=(l^t)_{\ast}\tilde{m}^t$ is the measure $\tilde{m}^t$ push-forwarded through $l^t$ on $S^{N-1}$.
It is easy to check that the measure $\tilde{\mu}^t$  is finite and non-degenerate in the sense of \eqref{eq:non_deg_measure},
replacing therein $d$ by $N$.
\end{proof}

An immediate application of the above result is a first representation formula for the transition semigroup $\{P_t\colon t\ge0\}$ associated
with the Ornstein-Uhlenbeck process $\{X_t\}_{t\ge 0}$, at least for small times. Indeed, denoting by $\mathbb{P}_{X}$ the law of a
random variable $X$, Equation \eqref{eq:identity_in_law} implies that for any $\phi$ in $B_b(\R^N)$, it holds that
\begin{equation}\label{eq:Representation_semigroup1}
P_t\phi(x) \, = \, \int_{\R^N}\phi(e^{tA}x+y) \, \mathbb{P}_{\Lambda_t}(dy) \, = \, \int_{\R^N}\phi(e^{tA}x+\mathbb{M}_ty) \,
\mathbb{P}_{S^t_t}(dy), \quad x \in \R^N, \, t \in (0,1].
\end{equation}
Moreover, condition [\textbf{ND}] for $\{S^t_u\}_{u\ge 0}$ allows us to decompose it into two components: a truncated, $\alpha$-stable part
and a remainder one. Indeed, if we denote by $\nu^t_{\alpha}$ the measure serving as lower bound to the L\'evy measure $\tilde{\nu}^t$ in \eqref{eq:ND_assumption_for_St}, i.e.
\begin{equation}\label{eq:def_levy_measure_for_stable}
\nu^t_{\alpha}(C) \, := \, \int_{0}^{R_0}\int_{\mathbb{S}^{N-1}} \mathds{1}_{C}(\theta r) \tilde{\mu}^t(d\theta)\frac{dr}{r^{1+\alpha}},
\quad C \in \mathcal{B}(\R^N_0),
\end{equation}
we can consider $\{Y^t_u\}_{u\ge 0}$, the L\'evy process on $\R^N$ associated with the L\'evy triplet $(0,0, \nu^t_{\alpha})$. We recall now a useful fact involving the L\'evy symbol $\Phi^t_\alpha$ of the process $Y^t$. The non-degeneracy of the
measure $\tilde{\mu}^t$ is equivalent to the existence of a constant $C>0$ such that
\begin{equation}\label{eq:equivalence_non-deg_measure}
\Phi^t_\alpha(\xi) \, \ge \, C \vert \xi \vert^\alpha, \quad \xi \in \R^N.
\end{equation}
A proof of this result can be found, for example, in \cite{Priola12} p.$424$.\newline
In order to apply the results in \cite{Schilling:Sztonyk:Wang}, we are going to truncate the above process at the typical
time scale for an $\alpha$-stable process. This is $t^{1/\alpha}$ when considering the process at time $t$ (cf. Example \ref{example_basic}). Namely, we consider the family $\{\mathbb{P}^{\text{tr}}_t\}_{t\ge 0}$ of infinitely
divisible probabilities whose characteristic function has the form
$\widehat{\mathbb{P}^{\text{tr}}_t}(\xi):=\text{exp}[-\Phi^{\text{tr}}_t(\xi)]$, where
\[\Phi^{\text{tr}}_t(\xi) \, := \, \int_{\vert z\vert \le t^{\frac{1}{\alpha}}} \bigl[1-e^{i\langle \xi,z\rangle}+i\langle
\xi,z\rangle\bigr]\, \nu^t_\alpha(dz).\]
On the other hand, since the measure $\tilde{\nu}^t$ satisfies assumption [\textbf{ND}], we know that the remainder
$\tilde{\nu}^t-\mathds{1}_{B(0,t^{1/\alpha})}\nu^t_\alpha$ is again a L\'evy measure on $\R^N$. Let $\{\pi_t\}_{t\ge0}$ be the family of
infinitely divisible probability associated with the following L\'evy triplet:
\[(\tilde{Q}^t,\tilde{b}^t,\tilde{\nu}^t-\mathds{1}_{B(0,t^{1/\alpha})}\nu^t_\alpha).\]
It follows immediately that $\mathbb{P}_{S^t_t} \, = \, \mathbb{P}^{\text{tr}}_t \ast \pi_t$ for any $t>0$. We can now disintegrate the
measure $\mathbb{P}_{S^t_t}$ in Equation \eqref{eq:Representation_semigroup1} in order to obtain
\begin{equation}\label{eq:Representation_semigroup2}
P_t\phi(x) \, = \, \int_{\R^N}\int_{\R^N}\phi\bigl(e^{tA}x+\mathbb{M}_t(y_1+y_2)\bigr) \, \mathbb{P}^{\text{tr}}_t(dy_1) \pi_t(dy_2).
\end{equation}

The next step is to use Proposition $2.3$ in \cite{Schilling:Sztonyk:Wang} to show a smoothing effect for the family of truncated
stable measures $\{\mathbb{P}^{\text{tr}}_t\colon t\ge 0\}$, at least for small times. Namely,

\begin{prop}
\label{prop:control_on_truncated_density}
Fixed $m$ in $\N_0$, there exists $T_0:=T_0(m)>0$ such that for any $t$ in $(0,T_0]$, the
probability $\mathbb{P}^{\text{tr}}_{t}$ has a density $p^{\text{tr}}(t,\cdot)$ that is $m$-times continuously differentiable on $\R^N$. \newline
Moreover, for any $\vartheta$ in $\mathbb{N}^N$ such that $\vert \vartheta \vert \le m$, there exists a constant $C:=C(m,\vert
\vartheta\vert)$ such that
\[\vert D^\vartheta p^{\text{tr}}(t,y) \vert \, \le \, Ct^{-\frac{N+\vert \vartheta \vert}{\alpha}} \Bigl(1+\frac{\vert y
\vert}{t^{1/\alpha}}\Bigr)^{\vert \vartheta \vert-m}, \quad t \in (0,T_0], \, y \in \R^N.\]
\end{prop}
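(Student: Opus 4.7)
The strategy is to obtain the density $p^{\text{tr}}(t,\cdot)$ by Fourier inversion of the characteristic function $e^{-\Phi^{\text{tr}}_t}$ of $\mathbb{P}^{\text{tr}}_t$, with the existence of $m$ classical derivatives reduced to a Hartman-Wintner-type lower bound on $\text{Re}(\Phi^{\text{tr}}_t)$. This is precisely the framework of Proposition 2.3 in \cite{Schilling:Sztonyk:Wang}, so the proof will amount to verifying its hypotheses uniformly for $t\in(0,T_0]$.

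Using the polar representation \eqref{eq:def_levy_measure_for_stable} of $\nu^t_\alpha$, I would first write
\[\text{Re}(\Phi^{\text{tr}}_t(\xi)) \,=\, \int_0^{t^{1/\alpha}}\int_{\mathbb{S}^{N-1}}(1-\cos(r\langle\xi,\theta\rangle))\,\tilde{\mu}^t(d\theta)\frac{dr}{r^{1+\alpha}},\]
and perform the substitution $u = r|\xi|$. For $|\xi|\geq t^{-1/\alpha}$ the inner $u$-integral extends past $u=1$, so the uniform non-degeneracy of $\tilde{\mu}^t$ from Proposition \ref{prop:Decomposition_Process_X} together with \eqref{eq:equivalence_non-deg_measure} yields $\text{Re}(\Phi^{\text{tr}}_t(\xi))\geq c|\xi|^\alpha$ with $c$ independent of $t$; in the complementary range the Taylor bound $1-\cos x\geq cx^2$ on $|x|\leq 1$ gives $\text{Re}(\Phi^{\text{tr}}_t(\xi))\geq c\,t^{(2-\alpha)/\alpha}|\xi|^2$. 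These force $\xi\mapsto |\xi|^m e^{-\Phi^{\text{tr}}_t(\xi)}\in L^1(\R^N)$ for $t\in(0,T_0(m)]$, so Fourier inversion produces a $C^m$ density
\[D^\vartheta p^{\text{tr}}(t,y) \,=\, \frac{1}{(2\pi)^N}\int_{\R^N}(-i\xi)^\vartheta e^{-i\langle y,\xi\rangle}e^{-\Phi^{\text{tr}}_t(\xi)}\,d\xi.\]
A direct $L^1$-estimate on $|\xi|^{|\vartheta|}e^{-\Phi^{\text{tr}}_t(\xi)}$, in which the inner region $|\xi|<t^{-1/\alpha}$ contributes the leading term, yields the sup-norm bound $\|D^\vartheta p^{\text{tr}}(t,\cdot)\|_\infty\leq C\,t^{-(N+|\vartheta|)/\alpha}$, which already settles the proposition when $|y|\leq t^{1/\alpha}$.

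For the decay factor $(1+|y|/t^{1/\alpha})^{|\vartheta|-m}$ in the range $|y|>t^{1/\alpha}$, I would rewrite $y^\beta e^{-i\langle y,\xi\rangle} = (-i\partial_\xi)^\beta e^{-i\langle y,\xi\rangle}$ and integrate by parts $m-|\vartheta|$ times, transferring derivatives onto $(-i\xi)^\vartheta e^{-\Phi^{\text{tr}}_t(\xi)}$. Via Fa\`a di Bruno this produces a polynomial in the derivatives $\partial_\xi^\gamma\Phi^{\text{tr}}_t$, each controlled by differentiating under the $\nu^t_\alpha$-integral with the truncation $|z|\leq t^{1/\alpha}$ supplying the correct scaling in $t$. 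A careful bookkeeping of the $t$-powers generated at each integration by parts, combined with the scaled Fourier estimate of the previous step, then matches the claimed exponent $t^{-(N+|\vartheta|)/\alpha}(t^{1/\alpha}/|y|)^{m-|\vartheta|}$.

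The main technical obstacle is propagating uniformity in $t$ throughout the argument: the symbol $\Phi^{\text{tr}}_t$ depends on $t$ through both the spherical measure $\tilde{\mu}^t$ (inherited from $R_{st}$ in Proposition \ref{prop:Decomposition_Process_X}) and the truncation radius $t^{1/\alpha}$. Uniform non-degeneracy of $\tilde{\mu}^t$ on $(0,T_0]$ is guaranteed by the uniform-in-$s,t$ bounds on $R_{st}B$ in Lemma \ref{lemma:Decomposition_exp_A}, and the delicate balance between the $t$-powers produced at each integration by parts and those produced by each $\xi$-derivative of $\Phi^{\text{tr}}_t$ is what ultimately fixes the precise exponent $(|\vartheta|-m)/\alpha$ in the decay factor.
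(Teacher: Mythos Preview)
Your approach is correct and would work, but it differs from the paper's in an instructive way. The paper does not work with the truncated symbol $\Phi^{\text{tr}}_t$ at all; instead it verifies the hypotheses of Proposition~2.3 in \cite{Schilling:Sztonyk:Wang} for the \emph{untruncated} symbol $\Phi^t_\alpha$ of the process $\{Y^t_u\}_{u\ge0}$. Because the non-degeneracy of $\tilde\mu^t$ gives the global bound $\Phi^t_\alpha(\xi)\ge C|\xi|^\alpha$ (Equation~\eqref{eq:equivalence_non-deg_measure}), both the Hartman--Wintner condition and the controllability condition $\int_{\R^N}e^{-t\,\text{Re}\,\Phi^t_\alpha(\xi)}|\xi|^m\,d\xi\le ct^{-(m+N)/\alpha}$ follow in a couple of lines, and SSW's proposition then handles the truncation at radius $t^{1/\alpha}$ and the integration-by-parts machinery internally. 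Your route---working directly with $\Phi^{\text{tr}}_t$, splitting into the two regimes $|\xi|\gtrless t^{-1/\alpha}$, and re-deriving the Fourier-inversion/Fa\`a di Bruno argument by hand---is essentially what SSW do inside their proposition, so you are re-proving that black box rather than invoking it. This buys self-containedness at the cost of considerably more bookkeeping; the paper's choice to push the truncation into the cited result and verify only the two clean conditions on $\Phi^t_\alpha$ is the more economical path.
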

\begin{proof}
The result follows immediately applying Proposition $2.3$ in \cite{Schilling:Sztonyk:Wang}. To do so, we need to show that the L\'evy symbol
$\Phi^t_\alpha$ of the process $\{Y^t_u\}_{u\ge 0}$ satisfies the following assumptions:
\begin{itemize}
  \item \emph{Hartman-Wintner condition.} There exists $T>0$ such that
  \[\liminf_{\vert \xi \vert \to \infty}\frac{\text{Re}\Phi^t_{\alpha}(\xi)}{\ln(1+\vert \xi \vert)}\, = \, \infty, \quad t \in (0,T];\]
  \item \emph{Controllability condition.} There exist $T>0$ and $c>0$ such that
  \[\int_{\R^N}e^{-t\text{Re}\Phi^t_{\alpha}(\xi)}\vert \xi \vert^m \, \le \, ct^{-\frac{m+N}{\alpha}}, \quad t \in (0,T].\]
\end{itemize}
In order to show that the above conditions hold, we fix $T\le 1$ and we recall that the L\'evy symbol $\Phi^t_\alpha$ of $Y_t$, the
truncated $\alpha$-stable process with L\'evy measure introduced in \eqref{eq:def_levy_measure_for_stable}, can be written through the
L\'evy-Kitchine formula as
\[\Phi^t_\alpha(\xi) \, = \, \int_{\R^N_0}\bigl(1-e^{i\langle \xi,z\rangle}+i\langle \xi,z\rangle\bigr)\nu_\alpha^t(dz) \, = \,
\int_{0}^{R_0}\int_{\mathbb{S}^{N-1}}\bigl(1-\cos(\langle \xi, r\theta\rangle)\bigr)\, \tilde{\mu}^t(d\theta)\frac{dr}{r^{1+\alpha}}.\]
We have seen in Equation \eqref{eq:equivalence_non-deg_measure} that the non-degeneracy of $\tilde{\mu}^t$ implies that
$\Phi^t_\alpha(\xi)  \ge C \vert \xi \vert^\alpha$. The Hartman-Wintner condition then follows immediately since
\[\liminf_{\vert \xi \vert \to \infty}\frac{\text{Re}\Phi^t_{\alpha}(\xi)}{\ln(1+\vert \xi \vert)}\, \ge \, \liminf_{\vert \xi \vert \to
\infty}\frac{c\vert\xi\vert^\alpha}{\ln(1+\vert \xi \vert)}\, = \,\infty.\]
To show instead the controllability assumption, let us firstly notice that
\[e^{-t\text{Re}\Phi^t_{\alpha}(\xi)} \, \le \,
            \begin{cases}
                1, & \mbox{if } \vert \xi \vert \le R; \\
                e^{-ct\vert \xi \vert^\alpha}, & \mbox{if } \vert \xi \vert > R,
            \end{cases}
\]
for some $R>0$. It then follows that
\begin{align*}
 \int_{\R^N} e^{-t\text{Re}\Phi^t_{\alpha}(\xi)}\vert \xi \vert^m \, d\xi \, &= \, \int_{\vert \xi \vert\le R}\vert \xi \vert^m \, d\xi+
 \int_{\vert \xi \vert>R} e^{-ct\vert\xi\vert^\alpha}\vert \xi \vert^m \, d\xi \\
   &\le \, C+t^{-\frac{m+N}{\alpha}}\int_{\vert \xi \vert>t^{1/\alpha}R}e^{-c\vert\xi\vert^\alpha}\vert \xi \vert^m \, d\xi\\
   &\le \, C+t^{-\frac{m+N}{\alpha}}\int_{\R^N}e^{-c\vert\xi\vert^\alpha}\vert \xi \vert^m \, d\xi \\
   &\le \, Ct^{-\frac{m+N}{\alpha}},
\end{align*}
where in the last step we used that $1\le t^{-\frac{m+N}{\alpha}}$.
\end{proof}

\setcounter{equation}{0}
\section{Estimates for Transition Semigroup}

The results in the previous section (Proposition \ref{prop:control_on_truncated_density} and Equation \eqref{eq:Representation_semigroup2})
allow us to represent the semigroup $P_t$ of the Ornstein-Uhlenbeck process $\{X_t\}_{t\ge 0}$ as
\begin{equation}\label{eq:Representation_semigroup}
P_t\phi(x) \, = \, \int_{\R^N}\int_{\R^N}\phi(\mathbb{M}_t(y_1+y_2)+e^{tA}x)p^{\text{tr}}(t,y_1)\, dy_1\pi_t(dy_2), \quad x \in \R^N,
\end{equation}
at least for small time intervals. \newline
Here, we will focus on estimates in $\Vert \cdot\Vert_\infty$-norm of the transition semigroup $\{P_t\colon t\ge 0\}$  given in Equation
\eqref{eq:Def_transition_semigroup} and its derivatives. The main result in this section is Corollary
\ref{corollary:continuity_between_holder} that shows the continuity of $P_t$ between anisotropic Zygmund-H\"older spaces. These controls
will be fundamental in the next section to prove Schauder Estimates in the elliptic and parabolic settings.\newline
As we will see in the following result, the derivatives of the semigroup $P_t$ with respect to a component $i$ in $I_h$ induces an additional time singularity of order $\frac{1+\alpha(h-1)}{\alpha}$, corresponding to the intrinsic time scale of the considered component.

\begin{prop}
\label{prop:control_in_inf_norm}
Let $h,h',{h''}$ be in $\llbracket 1,n\rrbracket$ and $\phi$ in $B_b(\R^N)$. Then, there exists a constant $C>0$ such that for any $i$ in
$I_h$, any $j$ in $I_{h'}$ and any $k$ in $I_{h''}$, it holds that
\begin{align}
\label{eq:Control_in_inf_norm_deriv1}
    \Vert D_i P_t \phi \Vert_\infty \, &\le \, C\Vert \phi \Vert_\infty \bigl(1+t^{-\frac{1+\alpha(h-1)}{\alpha}}\bigr), \quad t>0;\\
\label{eq:Control_in_inf_norm_deriv2}
    \Vert D^2_{i,j} P_t \phi \Vert_\infty \, &\le \, C\Vert \phi \Vert_\infty \bigl(1+t^{-\frac{2+\alpha(h+h'-2)}{\alpha}}\bigr),\quad t>0;\\
\label{eq:Control_in_inf_norm_deriv3}
  \Vert D^3_{i,j,k} P_t \phi \Vert_\infty \, &\le \, C\Vert \phi \Vert_\infty \bigl(1+t^{-\frac{3+\alpha(h+h'+h''-3)}{\alpha}}\bigr),\quad
  t>0.
\end{align}
\end{prop}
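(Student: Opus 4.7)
The strategy is to differentiate under the integral in the representation \eqref{eq:Representation_semigroup} for small times, moving all spatial derivatives onto the smooth truncated density $p^{\text{tr}}$, and then to use the semigroup property for large times. First, fix $m\in\N$ large enough that Proposition \ref{prop:control_on_truncated_density} provides a density $p^{\text{tr}}(t,\cdot)\in C^3(\R^N)$ on some interval $(0,T_0]$ with derivatives up to order $3$ globally integrable in $y$ (choose $m> N+3$ to get integrability in the tail bound). For $t\in(0,T_0]$, perform in \eqref{eq:Representation_semigroup} the change of variables $u=y_1+y_2$ followed by $z=\mathbb{M}_tu+e^{tA}x$, which is legitimate since $\mathbb{M}_t$ is invertible for $t>0$. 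This yields
\[P_t\phi(x) \, = \, \det(\mathbb{M}_t)^{-1}\int_{\R^N}\!\!\int_{\R^N}\phi(z)\,p^{\text{tr}}\bigl(t,\mathbb{M}_t^{-1}(z-e^{tA}x)-y_2\bigr)\,dz\,\pi_t(dy_2),\]
so that $x$ now only appears inside $p^{\text{tr}}$.

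Differentiating under the integral with respect to $x$ along the direction $e_i$, with $i\in I_h$, brings down a factor $-\langle (\nabla_y p^{\text{tr}})(t,\cdot), \mathbb{M}_t^{-1}e^{tA}e_i\rangle$. From Lemma \ref{lemma:Decomposition_exp_A}, $\mathbb{M}_t^{-1}e^{tA}=R_t\mathbb{M}_t^{-1}$ with $\{R_t\}_{t\in[0,1]}$ uniformly bounded, and since $e_i\in E_h(\R^N)$ we have $\mathbb{M}_t^{-1}e_i=t^{-(h-1)}e_i$, whence
\[\bigl\vert \mathbb{M}_t^{-1}e^{tA}e_i\bigr\vert \, \le \, C\,t^{-(h-1)}, \qquad t\in(0,T_0].\]
Undoing the change of variables after taking absolute values and bounding $\phi$ by its supremum norm, we arrive at
\[\Vert D_i P_t\phi\Vert_\infty \, \le \, C\Vert\phi\Vert_\infty\,t^{-(h-1)}\int_{\R^N}\bigl\vert \nabla_y p^{\text{tr}}(t,y)\bigr\vert\,dy,\]
where we have also used that $\pi_t$ is a probability measure. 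The pointwise bound of Proposition \ref{prop:control_on_truncated_density} with $\vert\vartheta\vert=1$, combined with the rescaling $y=t^{1/\alpha}\tilde{y}$, shows that the last integral is bounded by $Ct^{-1/\alpha}$. Multiplying the two powers of $t$ gives precisely $t^{-(1+\alpha(h-1))/\alpha}$ and proves \eqref{eq:Control_in_inf_norm_deriv1} for $t\in(0,T_0]$. For $t>T_0$, write $P_t=P_{T_0}P_{t-T_0}$; since $P_{t-T_0}$ is a contraction on $B_b(\R^N)$ and the estimate at time $T_0$ is finite, one obtains $\Vert D_iP_t\phi\Vert_\infty\le C\Vert\phi\Vert_\infty$, which together with the small-time bound yields the $(1+t^{-(1+\alpha(h-1))/\alpha})$ form.

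The estimates \eqref{eq:Control_in_inf_norm_deriv2} and \eqref{eq:Control_in_inf_norm_deriv3} follow by iteration of exactly the same scheme. Each additional derivative $D_j$ with $j\in I_{h'}$ (resp.\ $D_k$ with $k\in I_{h''}$) moves onto $p^{\text{tr}}$, producing one further factor of the form $\vert\mathbb{M}_t^{-1}e^{tA}e_j\vert\le C t^{-(h'-1)}$ (resp.\ $\le Ct^{-(h''-1)}$) from Lemma \ref{lemma:Decomposition_exp_A}, and one further factor of $t^{-1/\alpha}$ from integrating the corresponding higher-order derivative of $p^{\text{tr}}$ (with $m$ chosen large enough in Proposition \ref{prop:control_on_truncated_density} to ensure integrability of $D^2_y p^{\text{tr}}$ and $D^3_y p^{\text{tr}}$). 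Collecting the powers gives $t^{-(2+\alpha(h+h'-2))/\alpha}$ and $t^{-(3+\alpha(h+h'+h''-3))/\alpha}$ respectively, and the large-time regime is again handled by the semigroup property.

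\textbf{Main obstacle.} The delicate point is not the change of variables or the integration by parts, but isolating cleanly the two independent sources of the singular behavior: the isotropic, purely non-local scaling $t^{-k/\alpha}$ of derivatives of $p^{\text{tr}}$, and the anisotropic geometric scaling $t^{-(h-1)}$ coming from the direction of differentiation through the matrix $\mathbb{M}_t^{-1}e^{tA}$. The identity $\mathbb{M}_t^{-1}e^{tA}\mathbb{M}_t=R_t$ from Lemma \ref{lemma:Decomposition_exp_A} is precisely what decouples them, so applying it at the correct step is the key manoeuvre; without it one would pick up spurious negative powers of $t$ on the off-diagonal blocks.
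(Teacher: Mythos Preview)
Your proposal is correct and follows essentially the same approach as the paper: put the $x$-dependence inside $p^{\text{tr}}$ via a change of variables, differentiate, extract the factor $\mathbb{M}_t^{-1}e^{tA}e_i$, bound it by $Ct^{-(h-1)}$ using Lemma~\ref{lemma:Decomposition_exp_A}, and then integrate the derivative of $p^{\text{tr}}$ using Proposition~\ref{prop:control_on_truncated_density}; the large-time case is handled by the semigroup property exactly as you do. The only cosmetic difference is that the paper bounds $\vert\mathbb{M}_t^{-1}e^{tA}e_i\vert$ by a direct block-by-block computation rather than via the factorisation $\mathbb{M}_t^{-1}e^{tA}=R_t\mathbb{M}_t^{-1}$ with $R_t$ bounded (the latter is not stated explicitly in Lemma~\ref{lemma:Decomposition_exp_A} but follows at once from its block estimates), and uses the simpler shift $y_1\mapsto y_1-\mathbb{M}_t^{-1}e^{tA}x$ in place of your two-step change of variables.
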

\begin{proof}
We start fixing a time horizon  $T := 1\wedge  T_0(N+4) > 0$, where $T_0(m)$ was defined in Proposition
\ref{prop:control_on_truncated_density}. Our choice of $N+4$ is motivated by the fact that we consider derivatives up to order $3$. \newline
On the interval $(0,T]$, the representation formula \eqref{eq:Representation_semigroup} holds and $P_t\phi$ is three times differentiable for any $\phi$ in $B_b(\R^N)$. We are going to show only Estimate \eqref{eq:Control_in_inf_norm_deriv1} since the controls for the higher derivatives can be obtained similarly.\newline
Fixed $t\le T$, let us consider $i$ in $I_h$ for some $h$ in $\llbracket 1,n\rrbracket$. When $t\le T$, we recall from Equation
\eqref{eq:Representation_semigroup} that, up to a change of variables, it holds that
\[\bigl{\vert}D_i P_t\phi(x)\bigr{\vert} \, = \, \Bigl{\vert} D_i\int_{\R^N}\int_{\R^N} \phi(\mathbb{M}_t(y_1+y_2)) p^{\text{tr}}(t,
y_1-\mathbb{M}^{-1}_te^{tA}x)\, dy_1\pi_t(dy_2) \Bigr{\vert}.\]
We can then move the derivative inside the integral and write that
\begin{equation}\label{eq:Control_deriv1}
\begin{split}
\bigl{\vert}D_i P_t\phi(x)\bigr{\vert} \, &= \, \Bigl{\vert} \int_{\R^N}\int_{\R^N} \phi(\mathbb{M}_t(y_1+y_2)) \langle \nabla p^{\text{tr}} (t,
y_1-\mathbb{M}^{-1}_te^{tA}x),\mathbb{M}^{-1}_te^{tA}e_i\rangle\, dy_1\pi_t(dy_2)\Bigr{\vert} \\
&\le \, \vert \mathbb{M}^{-1}_te^{tA} e_i \vert \int_{\R^N}\int_{\R^N} \vert \phi(\mathbb{M}_t(y_1+y_2))\vert \, \vert \nabla p^{\text{tr}}(t,y_1-\mathbb{M}^{-1}_te^{tA}x))\vert\, dy_1\pi_t(dy_2)\\
&\le \, Ct^{-(h-1)}\Vert \phi\Vert_\infty \int_{\R^N}\int_{\R^N} \vert \nabla p^{\text{tr}} (t, y_1)\vert \, dy_1\pi_t(dy_2),
\end{split}
\end{equation}
where in the last step we exploited Lemma \ref{lemma:Decomposition_exp_A} to control
\[\vert \mathbb{M}^{-1}_te^{tA}e_i\vert \, \le \, \sum_{k=1}^{n}\vert  \mathbb{M}^{-1}_tE_ke^{tA}E_he_i\vert \, \le \,
C\bigl[\sum_{k=1}^{h-1}t^{k-h}t+\sum_{k=h}^{n}t^{-(k-1)}t^{-(h-1)}\bigr] \, \le \, Ct^{-(h-1)},\]
remembering that $t\le 1$.
We conclude the case $t\le T$ using the control on $p^{\text{tr}}$ (Proposition \ref{prop:control_on_truncated_density} with $m=N+2$) to
write that
\begin{equation}\label{eq:Control_deriv3}
\begin{split}
\bigl{\vert}D_i P_t\phi(x)\bigr{\vert}\, &\le \,  C\Vert \phi \Vert_\infty\pi_t(\R^N)t^{-(h-1)}  \int_{\R^N}
t^{-\frac{N+1}{\alpha}}\bigl(1+\frac{\vert y_1 \vert}{t^{1/\alpha}}\bigr)^{-(N+1)} \, dy_1 \\
&\le \, C\Vert \phi \Vert_\infty t^{-\frac{1+\alpha(h-1)}{\alpha}}  \int_{\R^N} (1+\vert z \vert)^{-(N+1)}\, dz\\
&\le \,C\Vert \phi \Vert_\infty t^{-\frac{1+\alpha(h-1)}{\alpha}}.
\end{split}
\end{equation}
Above, we used the change of variables $z=t^{-1/\alpha}y_1$. When $t>T$, we can exploit the already proven controls for small times, the
semigroup and the contraction properties of $\{P_t\colon t\ge0\}$
on $B_b(\R^N)$ to write that
\begin{equation}\label{eq:COntrol_for_t>1}
\Vert D_i P_t \phi \Vert_\infty \, = \, \Vert D_i P_{T}\bigl(P_{t-T} \phi\bigr) \Vert_\infty \, \le \, C_T\Vert P_{t-T} \phi
\Vert_\infty \, \le \, C\Vert \phi \Vert_\infty.
\end{equation}
We have thus shown Control \eqref{eq:Control_in_inf_norm_deriv1} for any $t>0$.
\end{proof}

The following interpolation inequality (see e.g.\ \cite{book:Triebel83})
\begin{equation}\label{eq:Interpolation_ineq}
\Vert \phi \Vert_{C^{r\delta_1+(1-r)\delta_2}_b} \, \le \, C\Vert \phi\Vert^r_{C^{\delta_1}_b}\Vert \phi \Vert^{1-r}_{C^{\delta_2}_b}
\end{equation}
valid for $0\le \delta_1 <\delta_2$, $r$ in $(0,1)$ and $\phi$ in $C^{\delta_2}(\R^N)$, allows us to extend easily the above
result.

\begin{corollary}
\label{coroll:C0_Cgamma_control}
Let $\gamma$ be in $[0,1+\alpha)$. Then, there exists a constant $C>0$ such that
\begin{equation}\label{eq:coroll:C0_Cgamma_control}
\Vert P_t \Vert_{\mathcal{L}_c(C_b,C^\gamma_{b,d})} \, \le \, C\bigl(1+t^{-\frac{\gamma}{\alpha}}\bigr), \quad t>0.
\end{equation}
\end{corollary}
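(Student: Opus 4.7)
The plan is to reduce \eqref{eq:coroll:C0_Cgamma_control} to direction-wise estimates. By the very definition \eqref{eq:def_anistotropic_norm}, $\Vert P_t\psi\Vert_{C^\gamma_{b,d}}$ is the sum over $h\in\llbracket 1,n\rrbracket$ of the usual Zygmund-H\"older norm of $z\mapsto P_t\psi(x_0+z)$ on $E_h(\R^N)$ at order $\delta_h := \gamma/(1+\alpha(h-1))$, taken uniformly in $x_0\in\R^N$. The $L^\infty$ contribution is immediate since $P_t$ is a contraction on $B_b(\R^N)$, so $\Vert P_t\psi\Vert_\infty\le \Vert\psi\Vert_\infty$. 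What remains is the semi-norm of order $\delta_h$; crucially, the assumption $\gamma<1+\alpha<2+\alpha\le 3$ ensures $\delta_h\le \delta_1=\gamma<3$ for every $h$, so Lemma \ref{lemma:def_equivalent_norm} applies and it suffices to control the third-difference ratio $|\Delta^3_{x_0}(P_t\psi)(z)|/|z|^{\delta_h}$ uniformly in $x_0$ and in $z\in E_h(\R^N)\setminus\{0\}$.

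I would then establish two complementary pointwise bounds on that third difference. The trivial one gives $|\Delta^3_{x_0}(P_t\psi)(z)|\le 8\Vert P_t\psi\Vert_\infty\le 8\Vert\psi\Vert_\infty$. The refined one exploits the smoothness of $P_t\psi$ for $t>0$ coming from Proposition \ref{prop:control_in_inf_norm}: writing $\Delta^3_{x_0}(P_t\psi)(z)$ as the iterated integral of the third derivative of $P_t\psi$ in direction $z$, and noting that $z\in E_h(\R^N)$ forces all three differentiation indices to lie in $I_h$, Equation \eqref{eq:Control_in_inf_norm_deriv3} applied with $h=h'=h''$ yields
\[|\Delta^3_{x_0}(P_t\psi)(z)|\le C|z|^3\max_{i,j,k\in I_h}\Vert D^3_{i,j,k}P_t\psi\Vert_\infty\le C\Vert\psi\Vert_\infty|z|^3\bigl(1+t^{-3(1+\alpha(h-1))/\alpha}\bigr).\]

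To conclude, I would balance the two bounds via the elementary inequality $\min(A,B|z|^3)\le A^{1-\delta_h/3}(B|z|^3)^{\delta_h/3}$, valid for any $\delta_h\in[0,3]$. Dividing by $|z|^{\delta_h}$, absorbing multiplicative constants and using $(1+X)^{\delta_h/3}\le 1+X^{\delta_h/3}$ (since $\delta_h/3\in[0,1]$), the defining identity $\delta_h(1+\alpha(h-1))=\gamma$ gives
\[\frac{|\Delta^3_{x_0}(P_t\psi)(z)|}{|z|^{\delta_h}}\le C\Vert\psi\Vert_\infty\bigl(1+t^{-\delta_h(1+\alpha(h-1))/\alpha}\bigr)=C\Vert\psi\Vert_\infty\bigl(1+t^{-\gamma/\alpha}\bigr).\]
Summing these direction-wise bounds over $h$ and adding the $L^\infty$ contribution yields exactly \eqref{eq:coroll:C0_Cgamma_control}. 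The same conclusion can equivalently be packaged through the interpolation inequality \eqref{eq:Interpolation_ineq} between $C_b(E_h(\R^N))$ and a sufficiently high integer-order space, combined again with Proposition \ref{prop:control_in_inf_norm} to bound the latter.

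The main subtle point is the anisotropy: the cancellation $\delta_h(1+\alpha(h-1))=\gamma$ is exactly what is needed so that the $h$-dependent singularities $t^{-3(1+\alpha(h-1))/\alpha}$ coming from Proposition \ref{prop:control_in_inf_norm} collapse, after interpolation at power $\delta_h/3$, into the universal singularity $t^{-\gamma/\alpha}$ predicted by the intrinsic scaling discussed in Example \ref{example_basic}. A secondary book-keeping concern is to verify that the Taylor/integration step really only involves third derivatives $D^3_{i,j,k}$ with all indices in $I_h$, so that the sharpest version of Proposition \ref{prop:control_in_inf_norm} is available rather than a cruder global $D^3$ bound.
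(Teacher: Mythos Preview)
Your proposal is correct and follows the same core idea as the paper: bound $P_t\psi$ in $C^{\delta_h}_b(E_h(\R^N))$ by interpolating between the trivial $L^\infty$ contraction and the derivative estimates of Proposition \ref{prop:control_in_inf_norm}, then observe that $\delta_h(1+\alpha(h-1))=\gamma$ collapses every direction-wise singularity to $t^{-\gamma/\alpha}$.

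The packaging differs slightly. The paper invokes the interpolation inequality \eqref{eq:Interpolation_ineq} directly, interpolating between $C_b$ and $C^1_b$ using only \eqref{eq:Control_in_inf_norm_deriv1} when $\gamma\le 1$, and handling the remaining case $\gamma\in(1,1+\alpha)$ (where only $h=1$ gives $\delta_h>1$) by the analogous argument with higher derivatives. You instead work uniformly with the third-difference characterisation of Lemma \ref{lemma:def_equivalent_norm} and \eqref{eq:Control_in_inf_norm_deriv3}, performing the interpolation by hand via $\min(A,B|z|^3)\le A^{1-\delta_h/3}(B|z|^3)^{\delta_h/3}$. Your route avoids the case split and is arguably cleaner, at the cost of always invoking third derivatives even when first derivatives would suffice; the paper's route is marginally sharper in the order of differentiation used but needs the extra bookkeeping. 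You correctly identify at the end that the two packagings are equivalent.
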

\begin{proof}
Let us firstly assume that $\gamma$ is in $(0,1]$. Remembering the definition of $C^\gamma_{b,d}$-norm in \eqref{eq:def_anistotropic_norm}, we start
fixing a point $x_0$ in $\R^N$ and $h$ in $\llbracket 2,n\rrbracket$. Then, the contraction property of the semigroup implies that
\[\Vert P_t\phi(x_0+\cdot)_{|E_h(\R^N)}\Vert_\infty \, \le \, C\Vert \phi \Vert_\infty.\]
Moreover, Control \eqref{eq:Control_in_inf_norm_deriv1} in Proposition \ref{prop:control_in_inf_norm} ensures
that
\[ \quad \Vert D_iP_t \phi(x_0+ \cdot)_{|E_h(\R^N)} \Vert_\infty \, \le \, C\Vert\phi \Vert_\infty \bigl(1+
t^{-\frac{1+\alpha(h-1)}{\alpha}}\bigr).\]
It follows immediately that
\[\Vert P_t\phi(x_0+\cdot)_{|E_h(\R^N)}\Vert_{C^1_b} \, \le \, C\Vert \phi \Vert_\infty(1+t^{-\frac{1+\alpha(h-1)}{\alpha}}).\]
We can now apply the interpolation inequality \eqref{eq:Interpolation_ineq} with $\delta_1=0$, $\delta_2=1$ and $r=\gamma/(1+\alpha(h-1))$
in order to obtain that
\[\Vert P_t\phi(x_0+\cdot)_{|E_h(\R^N)}\Vert_{C^r_b} \, \le \, C\Vert P_t\phi(x_0+\cdot)_{|E_h(\R^N)}\Vert^{r}_{C^1_b} \Vert
P_t\phi(x_0+\cdot)_{|E_h(\R^N)}\Vert^{1-r}_{\infty} \, \le \, C\Vert \phi\Vert_\infty\bigl(1+t^{-\frac{\gamma}{\alpha}}\bigr).\]
The argument is analogous for $\gamma$ in $(1,3) $, considering only the case $h=0$.
\end{proof}

The next result allows us to extend the controls in Proposition \ref{prop:control_in_inf_norm} to functions in the anisotropic
Zygmund-H\"older spaces. Roughly speaking, it states that the anisotropic $\gamma$-H\"older regularity induces a
"homogeneous" gain in time of order $\gamma/\alpha$ that can be used to weaken, at least partially, the time singularities associated
with the derivatives.
The general argument of proof will mimic the one of Proposition \ref{prop:control_in_inf_norm} even if, this time, we will need to make the
H\"older modulus of $\phi$ appear. It will be managed introducing an auxiliary function $K$ (see Equation \eqref{eq:def_function_K}).

\begin{theorem}\label{prop:Control_in_Holder_norm}
Let $h,h',{h''}$ be in $\llbracket 1,n\rrbracket$ and $\phi$ in $C^\gamma_{b,d}(\R^N)$ for some $\gamma$ in $[0,1+\alpha)$. Then, there
exists a constant $C>0$ such that for any $i$ in $I_h$, any $j$ in $I_{h'}$ and any $k$ in $I_{h''}$, it holds that
\begin{align}
\label{eq:Control_in_Holder_norm_deriv1}
 \Vert D_i P_t \phi \Vert_\infty \, &\le \, C\Vert \phi \Vert_{C^\gamma_{b,d}}\bigl(1+t^{\frac{\gamma-(1+\alpha(h-1))}{\alpha}}\bigr),
  \quad t>0;\\
\label{eq:Control_in_Holder_norm_deriv2}
 \Vert D^2_{i,j}P_t \phi \Vert_\infty \, &\le \,C\Vert \phi\Vert_{C^\gamma_{b,d}}\bigl(1+t^{\frac{\gamma-(2+\alpha(h+h'-2))}{\alpha}}\bigr),
  \quad t>0;\\
\label{eq:Control_in_Holder_norm_deriv3}
 \Vert D^3_{i,j,k}P_t\phi\Vert_\infty \,&\le\,C\Vert\phi\Vert_{C^\gamma_{b,d}}\bigl(1+t^{\frac{\gamma-(3+\alpha(h+h'+h''-3))}{\alpha}}\bigr),
  \quad t>0.
\end{align}
\end{theorem}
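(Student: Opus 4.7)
Following the two-regime strategy of Proposition \ref{prop:control_in_inf_norm}, I fix $T := 1 \wedge T_0(N+4)$ and split according to whether $t>T$ or $t\in (0,T]$. In the first regime, the semigroup factorization $P_t = P_T \circ P_{t-T}$, combined with Proposition \ref{prop:control_in_inf_norm} applied at time $T$ and with the $L^\infty$-contraction $\|P_{t-T}\phi\|_\infty \le \|\phi\|_\infty \le \|\phi\|_{C^\gamma_{b,d}}$, yields a bound by $C\|\phi\|_{C^\gamma_{b,d}}$ which is absorbed into the "$1$" in each of the three announced inequalities.

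On $t\in(0,T]$, representation \eqref{eq:Representation_semigroup} is available. After the change of variable $y_1\mapsto y_1+\mathbb{M}_t^{-1}e^{tA}x$ (permitted since $\mathbb{M}_t$ is invertible for $t>0$), differentiation under the integral and undoing the shift give, for any derivation multi-index $\vartheta$ of order $|\vartheta|\in\{1,2,3\}$,
\[
D^\vartheta P_t\phi(x) \,=\, (-1)^{|\vartheta|}\int_{\R^N}\int_{\R^N}\phi\bigl(\mathbb{M}_t(y_1+y_2)+e^{tA}x\bigr)\,\Psi^\vartheta_t(y_1)\,dy_1\,\pi_t(dy_2),
\]
where $\Psi^\vartheta_t$ is a combinatorial product of the vectors $\mathbb{M}_t^{-1}e^{tA}e_i,\mathbb{M}_t^{-1}e^{tA}e_j,\mathbb{M}_t^{-1}e^{tA}e_k$ with the $|\vartheta|$-th spatial derivative of $p^{\text{tr}}(t,\cdot)$. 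The centering trick, which is the crux of the proof, exploits that $\int y_1^\beta \Psi^\vartheta_t(y_1)\,dy_1 = 0$ for every multi-index $\beta$ with $|\beta|<|\vartheta|$ (integration by parts, legitimate by the decay of $D^\vartheta p^{\text{tr}}$ from Proposition \ref{prop:control_on_truncated_density}). I may therefore replace $\phi$ under the integral by the auxiliary function
\[
K(y_1) \,:=\, \phi\bigl(\mathbb{M}_t(y_1+y_2)+e^{tA}x\bigr) - \mathcal{T}_\phi(y_1),
\]
where $\mathcal{T}_\phi$ is the anisotropic Taylor polynomial of $\phi$ in $y_1$ at $y_1=0$, of degree $0$ when $\gamma\le 1$ and of degree $1$ along $E_1(\R^N)$ when $\gamma\in(1,1+\alpha)$. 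The anisotropic H\"older regularity then provides $|K(y_1)| \le C\|\phi\|_{C^\gamma_{b,d}}\sum_{\ell=1}^n (t^{\ell-1}|E_\ell y_1|)^{\gamma/(1+\alpha(\ell-1))}$; combining with Proposition \ref{prop:control_on_truncated_density} for $D^\vartheta p^{\text{tr}}$, Lemma \ref{lemma:Decomposition_exp_A} for the factors $|\mathbb{M}_t^{-1}e^{tA}e_\star|\le Ct^{-(\star-1)}$, and the scaling change $z=y_1/t^{1/\alpha}$, each summand contributes a H\"older gain
\[
\frac{(\ell-1)\gamma}{1+\alpha(\ell-1)} + \frac{\gamma}{\alpha(1+\alpha(\ell-1))} \,=\, \frac{\gamma}{\alpha}
\]
on top of the bare singularity already established in Proposition \ref{prop:control_in_inf_norm}. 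Uniformity in $\ell$ reproduces the three announced exponents.

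The main obstacle lies in the critical regime $\gamma\in(1,1+\alpha)$, where $\gamma/(1+\alpha(\ell-1))>1$ precisely at $\ell=1$, so that a zero-order centering would give a suboptimal bound. The degree-one Taylor correction along $E_1$ in $\mathcal{T}_\phi$ is then necessary, but for the first derivative ($|\vartheta|=1$) it is \emph{not} annihilated by $\int \Psi^\vartheta_t\,dy_1$; its extra contribution equals, up to signs, $\langle D_{E_1}\phi(e^{tA}x+\mathbb{M}_t y_2), E_1 e^{tA}e_i\rangle$, which by Lemma \ref{lemma:Decomposition_exp_A} is $O(1)$ if $h=1$ and $O(t)$ if $h>1$. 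In both sub-cases this remainder is dominated by the target bound (thanks respectively to the "$+1$" summand when the target exponent is non-negative, and to the negativity of the target exponent when it is not). For the second and third derivatives the analogous cancellation becomes automatic, since polynomials of degree up to $|\vartheta|-1\ge 1$ are freely absorbed by $\int \Psi^\vartheta_t\,dy_1$; one is then left with a careful but routine bookkeeping of the cross-terms in $\Psi^\vartheta_t$.
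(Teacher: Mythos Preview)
Your overall two-regime structure and the idea of subtracting a Taylor correction under the integral are right, but there is a genuine gap in the range $\gamma\in[2,1+\alpha)$, which is non-empty whenever $\alpha>1$. You claim that with a degree-one Taylor polynomial along $E_1(\R^N)$ the remainder satisfies
\[
|K(y_1)| \,\le\, C\Vert\phi\Vert_{C^\gamma_{b,d}}\sum_{\ell=1}^n\bigl(t^{\ell-1}|E_\ell y_1|\bigr)^{\gamma/(1+\alpha(\ell-1))},
\]
but for $\ell=1$ the exponent is $\gamma>2$, while a first-order Taylor expansion of a $C^\gamma_b$ function along $E_1$ only yields $O(|E_1 y_1|^2)$ (since $D_{E_1}\phi$ is merely $C^{\gamma-1}$ with $\gamma-1>1$, hence differentiable, not $(\gamma-1)$-H\"older in the naive sense). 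The resulting time gain is $t^{2/\alpha}$ rather than $t^{\gamma/\alpha}$, which is strictly too weak to reach the announced exponents; for instance, for the second derivative with $h=h'=1$ you would obtain $O(1)$ instead of $O(t^{(\gamma-2)/\alpha})$, and for $h+h'>2$ the singularity you get is worse than the target. A second-order correction along $E_1$ is therefore needed, but then for $|\vartheta|=1$ (and also $|\vartheta|=2$) your moment-vanishing identity $\int y_1^\beta\Psi^\vartheta_t=0$ no longer annihilates the quadratic term, so further residual contributions must be tracked---you have not done this.

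The paper sidesteps this bookkeeping entirely by a different cancellation device: it introduces the correction $K(y_1,y_2,\xi)$ with a \emph{free} parameter $\xi$ in place of $e^{tA}x$, so that $\int\!\!\int p^{\text{tr}}(t,y_1)K(y_1,y_2,\xi)\,dy_1\,\pi_t(dy_2)$ is manifestly independent of $x$ and its $D_i$-derivative vanishes for any Taylor degree. Only \emph{after} moving the derivative onto $p^{\text{tr}}$ is $\xi$ specialised to $e^{tA}x$. This allows a degree-two expansion along $E_1$ without generating uncontrolled extra terms, and the paper then reduces to $\gamma\in(2,1+\alpha)$ by interpolation with Proposition~\ref{prop:control_in_inf_norm}. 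Your moment-vanishing route can in principle be pushed through, but it requires handling the non-annihilated higher-order corrections explicitly, which you have not addressed. A minor additional point: $T_0(N+4)$ is not enough for the third-derivative estimate once the $|y_1|^\gamma$ weight (with $\gamma$ close to $3$) is present; the paper takes $T_0(N+6)$ for this reason.
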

\begin{proof}
Similarly to Proposition \ref{prop:control_in_inf_norm}, we start fixing a time horizon
\begin{equation}\label{def:eq:time_horizon_T}
T\,:= \,1\wedge T_0(N+6)>0.
\end{equation}
Then, Corollary \ref{coroll:C0_Cgamma_control} implies the continuity of $P_t$ on
$C^\gamma_{b,d}(\R^N)$,  for any $t\ge T/2$. Indeed,
\begin{equation}\label{Proof:eq:COntinuity_Pt_t>1}
\Vert P_t\phi \Vert_{C^\gamma_{b,d}} \, \le \, C\Vert \phi \Vert_\infty\bigl(1+t^{-\frac{\gamma}{\alpha}}\bigr)\, \le \, C_T\Vert \phi
\Vert_{C^\gamma_{b,d}}.
\end{equation}
The same argument shown in Equation \eqref{eq:COntrol_for_t>1} can now be applied to prove Control \eqref{eq:Control_in_Holder_norm_deriv1} for $t>T$. Namely,
\[\Vert D_i P_t \phi \Vert_\infty \, = \, \Vert D_i P_{T/2}\bigl(P_{t-T/2} \phi\bigr) \Vert_\infty \, \le \, C_T\Vert P_{t-T/2} \phi
\Vert_\infty \, \le \, C\Vert \phi \Vert_\infty.\]
The same reasoning can be used for the higher derivatives, too.\newline
When $t\le T$, let us assume $\alpha>1$, so that $1+\alpha>2$. The case $\alpha\le1$ can be handled similarly taking into
account one less derivative. Moreover, we notice that we need to prove Controls
\eqref{eq:Control_in_Holder_norm_deriv1}-\eqref{eq:Control_in_Holder_norm_deriv3} only for $\gamma$ in $(2,1+\alpha)$ thanks to
interpolation techniques. Indeed, if we want, for example, to prove Estimates \eqref{eq:Control_in_Holder_norm_deriv1} for some $\gamma'$ in
$(0,2]$, we can use Theorem \ref{Thm:Interpolation} to show that
\[\Vert D_iP_t \Vert_{\mathcal{L}_c(C^{\gamma'}_{b,d};B_b)} \, \le \, \bigl(\Vert D_iP_t
\Vert_{\mathcal{L}_c(B_b)}\bigr)^{1-\gamma'/\gamma}\bigl(\Vert D_iP_t \Vert_{\mathcal{L}_c(C^{\gamma}_{b,d},B_b)}\bigr)^{\gamma'/\gamma}\,
\le \, C\bigl(1+t^{\frac{\gamma'-(1+\alpha(h-1))}{\alpha}}\bigr),\]
once we have proven Estimate \eqref{eq:Control_in_Holder_norm_deriv1} for $\gamma>2$.\newline
We are only going to show Control \eqref{eq:Control_in_Holder_norm_deriv1} for $t \le T$ and $\gamma$ in $(2,1+\alpha)$. The estimates
\eqref{eq:Control_in_Holder_norm_deriv2}, \eqref{eq:Control_in_Holder_norm_deriv3} for higher derivatives can be obtained
analogously.\newline
Fixed $i$ in $I_h$ for some $h$ in $\llbracket 1,n\rrbracket$, we start writing from Equation \eqref{eq:Representation_semigroup} that
\[
D_iP_t\phi(x) \, = \, D_i\int_{\R^N}\int_{\R^N}\phi(\mathbb{M}_t(y_1+y_2)+e^{tA}x)p^{\text{tr}}(t,y_1)\,
dy_1\pi_t(dy_2).\]
Moreover, we introduce the function $K\colon \R^N\times \R^N\times \R^N\to \R$ given by
\begin{equation}\label{eq:def_function_K}
K(y_1,y_2,\xi) \, :=\, \phi(\mathbb{M}_ty_2+\xi)+\langle D_{I_1}\phi(\mathbb{M}_ty_2+\xi),E_1\mathbb{M}_ty_1\rangle+\frac{1}{2}\langle
D^2_{I_1}\phi(\mathbb{M}_ty_2+\xi)E_1\mathbb{M}_ty_1,E_1\mathbb{M}_ty_1\rangle.
\end{equation}
We then notice that the expression
\[\int_{\R^N}\int_{\R^N}p^{\text{tr}}(t,y_1)K(y_1,y_2,\xi)\, dy_1\pi_t(dy_2)\]
does not depend on $x$. Recalling $D_i$ stands for the derivative with respect to the variable $x_i$, we thus get that
\[D_i\int_{\R^N}\int_{\R^N}p^{\text{tr}}(t,y_1)K(y_1,y_2,\xi)\, dy_1\pi_t(dy_2) \, = \, 0, \quad \xi \in \R^N.\]
This property will allow to use a cancellation argument in Equation \eqref{Proof:eq:Control_diff0} below, once we split the small jumps in the non-degenerate contributions and the other ones. It now follows that
\[ D_iP_t\phi(x) \, = \, D_i\int_{\R^N}\int_{\R^N}\bigl[\phi(\mathbb{M}_t(y_1+y_2)+e^{tA}x)-K(y_1,y_2,\xi)\bigr]p^{\text{tr}}(t,y_1)\,
dy_1\pi_t(dy_2).\]
The same reasoning used in Equation \eqref{eq:Control_deriv1} can be applied here to show that, for any (fixed) $\xi$ in $\R^N$, it holds that
\begin{equation}\label{eq:Control_function_K}
\bigl{\vert}D_iP_t\phi(x)\bigr{\vert}  \, \le \, Ct^{-(h-1)} \int_{\R^N}\int_{\R^N} \bigl{\vert}\phi(\mathbb{M}_t(y_1+y_2)+e^{tA}x)-K(y_1,y_2,\xi)\bigr{\vert}\,\vert \nabla p^{\text{tr}} (t,
y_1)\vert\, dy_1\pi_t(dy_2).
\end{equation}
We fix now $\xi=e^{tA}x$ and denoting for simplicity $z=e^{tA}x+\mathbb{M}_ty_2$, we decompose the difference
between absolute values as
\begin{equation}\label{Proof:eq:Control_diff0}
\begin{split}
\bigl{\vert} \phi(\mathbb{M}_ty_1+z)- K(y_1,y_2,e^{tA}x)\bigr{\vert} \, \le \,
\bigl{\vert} \phi(\mathbb{M}_ty_1+z)- \phi(E_1\mathbb{M}_ty_1+z)\bigr{\vert}+\bigl{\vert} &\phi(E_1\mathbb{M}_ty_1+z)
-K(y_1,y_2,e^{tA}x)\bigr{\vert}\\
&=:\, \Lambda_1+\Lambda_2.
\end{split}
\end{equation}
While it is trivial to control the first term as
\begin{equation}\label{Proof:eq:Control_diff1}
\Lambda_1 \, \le \, \Vert \phi \Vert_{C^\gamma_{b,d}}\sum_{k=2}^{n}\vert E_k\mathbb{M}_ty_1\vert^{\frac{\gamma}{1+\alpha(k-1)}},
\end{equation}
the second component $\Lambda_2$ needs a Taylor expansion to write that
\begin{equation}\label{Proof:eq:Control_diff2}
\begin{split}
\Lambda_2 \, &= \, \bigl{\vert}\phi(E_1\mathbb{M}_ty_1+z)- \phi(z)-\langle D_{I_1}\phi(z),E_1\mathbb{M}_ty_1\rangle-\frac{1}{2}\langle
D^2_{I_1}\phi(z)E_1\mathbb{M}_ty_1,E_1\mathbb{M}_ty_1\rangle\bigr{\vert}\\
&= \,\bigl{\vert} \int_{0}^{1}\langle D^2_{I_1}\phi(z+ \lambda E_1\mathbb{M}_ty_1)E_1\mathbb{M}_ty_1,
E_1\mathbb{M}_ty_1\rangle(1-\lambda) \, d\lambda - \frac{1}{2}\langle D^2_{I_1}\phi(z) E_1\mathbb{M}_ty_1, E_1\mathbb{M}_ty_1\rangle
\bigr{\vert} \\
&\le \, \Vert D^2_{I_1}\phi \Vert_{C^{\gamma-2}_b(E_1)}  \vert E_1\mathbb{M}_ty_1\vert^{\gamma-2} \vert E_1\mathbb{M}_ty_1\vert^2 \\
&\le \, \Vert \phi \Vert_{C^\gamma_{b,d}}\vert E_1\mathbb{M}_ty_1\vert^\gamma.
\end{split}
\end{equation}
Going back to Expression \eqref{eq:Control_function_K} with Estimates \eqref{Proof:eq:Control_diff0}, \eqref{Proof:eq:Control_diff1} and
\eqref{Proof:eq:Control_diff2}, we can show that
\[\bigl{\vert}D_iP_t\phi(x)\bigr{\vert} \, \le \, C\Vert \phi \Vert_{C^\gamma_{b,d}}t^{-(h-1)}\sum_{k=1}^{n}\int_{\R^N}  \vert
E_k\mathbb{M}_ty_1\vert^{\frac{\gamma}{1+\alpha(k-1)}} \vert
\nabla p^{\text{tr}} (t,y_1)\vert\, dy_1.\]
The above expression allows us to conclude as in \eqref{eq:Control_deriv3} using Proposition \ref{prop:control_on_truncated_density} with
$m=N+4$ and $\vert\vartheta\vert=1$. Namely,
\[\begin{split}
\bigl{\vert}D_iP_t\phi(x)\bigr{\vert}\, &\le \,  C\Vert \phi \Vert_{C^\gamma_{b,d}}t^{-(h-1)}\sum_{k=1}^{n}\int_{\R^N}
t^{-\frac{N+1}{\alpha}}\bigl(1+ \frac{\vert y_1\vert}{t^{\frac{1}{\alpha}}} \bigr)^{-(N+3)} \vert
E_k\mathbb{M}_ty_1\vert^{\frac{\gamma}{1+\alpha(k-1)}} \, dy_1 \\
&\le \, C \Vert \phi \Vert_{C^\gamma_{b,d}} t^{\frac{\gamma-(1+\alpha(h-1))}{\alpha}}\sum_{k=1}^{n}\int_{\R^N}\bigl(1+\vert z
\vert\bigr)^{-(N+3)} \vert z\vert^{\frac{\gamma}{1+\alpha(k-1)}} \, dz \\
&\le \,  C \Vert \phi \Vert_{C^\gamma_{b,d}}t^{\frac{\gamma -(1+\alpha(h-1))}{\alpha}},
\end{split}\]
where in the second step we used again the change of variable $z=y_1t^{-1/\alpha}$.
\end{proof}

Next, we are going to use the controls in Theorem \ref{prop:Control_in_Holder_norm} to show the main result of this section. It states
the continuity of the semigroup $P_t$ between anisotropic Zygmund-H\"older spaces at a cost of additional time singularities.

\begin{corollary} \label{corollary:continuity_between_holder}
Let $\beta,\gamma$ be in $[0,1+\alpha)$ such that $\beta\le \gamma$. Then, there exists a constant $C>0$ such that
\begin{equation}\label{eq:coroll:Cbeta_Cgamma_control}
\Vert P_t \Vert_{\mathcal{L}_c(C^\beta_{b,d},C^\gamma_{b,d})} \, \le \, C\bigl(1+t^{\frac{\beta-\gamma}{\alpha}}\bigr), \quad
t>0.
\end{equation}
\end{corollary}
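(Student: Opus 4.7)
The approach is to adapt the proof of Corollary \ref{coroll:C0_Cgamma_control} by exploiting the H\"older regularity of the source $\phi \in C^\beta_{b,d}(\R^N)$ through the sharp derivative estimates of Theorem \ref{prop:Control_in_Holder_norm}, and then applying the interpolation inequality \eqref{eq:Interpolation_ineq}.

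For the large-time regime $t \ge T$ (for some fixed $T \in (0, 1]$), the semigroup property $P_t = P_{T/2}\circ P_{t-T/2}$ combined with Corollary \ref{coroll:C0_Cgamma_control} on the first factor and the $L^\infty$-contractivity of $P_{t-T/2}$ yields
\[
\Vert P_t\phi\Vert_{C^\gamma_{b,d}} \, \le \, C_T\Vert P_{t-T/2}\phi\Vert_\infty \, \le \, C_T\Vert\phi\Vert_{C^\beta_{b,d}},
\]
which is dominated by $C(1 + t^{(\beta-\gamma)/\alpha})\Vert\phi\Vert_{C^\beta_{b,d}}$ trivially since $(1 + t^{(\beta-\gamma)/\alpha}) \ge 1$.

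For the short-time regime $t \in (0, T]$, fix $x_0 \in \R^N$ and $h \in \llbracket 1, n\rrbracket$, and target the norm $\Vert P_t\phi(x_0+\cdot)_{|E_h(\R^N)}\Vert_{C^{\gamma_h}_b}$ with $\gamma_h := \gamma/(1+\alpha(h-1))$. Setting $w := 1+\alpha(h-1)$ and $\beta_h := \beta/w$, I interpolate via \eqref{eq:Interpolation_ineq} between the endpoints $\delta_1 = \beta_h$ and an integer $\delta_2 = k > \gamma_h$ (with $k \in \{1,2,3\}$ sufficing, since $\gamma_h \le \gamma < 1 + \alpha < 3$). The upper endpoint is supplied by Theorem \ref{prop:Control_in_Holder_norm}: for any $k$-fold derivative $D^k_{i_1\cdots i_k}$ with all $i_j \in I_h$,
\[
\Vert D^k_{i_1\cdots i_k} P_t\phi\Vert_\infty \, \le \, C\Vert\phi\Vert_{C^\beta_{b,d}}\bigl(1+t^{(\beta-kw)/\alpha}\bigr).
\]
Choosing the interpolation parameter $r$ via $r\beta_h + (1-r)k = \gamma_h$ gives $1 - r = (\gamma - \beta)/(kw - \beta)$, and a short computation then yields $(1-r)(\beta - kw)/\alpha = (\beta - \gamma)/\alpha$, recovering the target time exponent independently of $k$ and $h$.

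The main obstacle is establishing the lower endpoint, namely the uniform bound
\[
\Vert P_t\phi(x_0+\cdot)_{|E_h(\R^N)}\Vert_{C^{\beta_h}_b} \, \le \, C\Vert\phi\Vert_{C^\beta_{b,d}}, \quad t \in (0, T].
\]
I would prove this by a dichotomy depending on whether $|z-z'|$ is large or small compared to the natural anisotropic scale $t^{w/\alpha}$. If $|z-z'| \ge t^{w/\alpha}$, the probabilistic representation $P_t\phi(x_0+z) = \mathbb{E}[\phi(e^{tA}(x_0+z) + \Lambda_t)]$ combined with the telescoping bound $|\phi(y)-\phi(y')| \le \Vert\phi\Vert_{C^\beta_{b,d}}\sum_l |E_l(y-y')|^{\beta_l}$ and the controls $|E_l e^{tA}(z-z')| \le Ct^{l-h}|z-z'|$ (for $l \ge h$) and $\le Ct|z-z'|$ (for $l < h$) from Lemma \ref{lemma:Decomposition_exp_A} gives the bound directly, thanks to the algebraic identity $(l-h)\beta_l \alpha = (\beta_h - \beta_l)w$ coming from $\beta_l(1+\alpha(l-1)) = \beta$. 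If $|z-z'| < t^{w/\alpha}$, the mean-value theorem along $E_h$ combined with the first-derivative estimate from Theorem \ref{prop:Control_in_Holder_norm} at $\gamma = \beta$ yields the control $C|z-z'|\Vert\phi\Vert_{C^\beta_{b,d}}(1+t^{(\beta-w)/\alpha})$, and the smallness $|z-z'|^{1-\beta_h} \le t^{(w-\beta)/\alpha}$ absorbs the singular factor into the target $C\Vert\phi\Vert_{C^\beta_{b,d}}|z-z'|^{\beta_h}$; the borderline case $h = 1$ with $\beta > 1$ (where $\beta_h > 1$) is handled analogously via the Zygmund characterization of Lemma \ref{lemma:def_equivalent_norm}, using the second- and third-order derivative bounds of Theorem \ref{prop:Control_in_Holder_norm} on $D_{E_1}P_t\phi$. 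Summing the resulting estimates over $h$ yields the Corollary.
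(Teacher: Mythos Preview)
Your proof is correct, and at its analytic core it coincides with the paper's argument: both establish the diagonal case (the bound on $P_t$ as a map $C^\beta_{b,d}\to C^\beta_{b,d}$) via the same diagonal/off-diagonal dichotomy in $|z|$ versus $t^{w/\alpha}$, using Lemma~\ref{lemma:Decomposition_exp_A} for the off-diagonal regime and the derivative bounds of Theorem~\ref{prop:Control_in_Holder_norm} for the diagonal one. The difference is purely in the packaging of the passage from $\beta=\gamma$ to $\beta<\gamma$. The paper invokes the real-interpolation identity of Theorem~\ref{Thm:Interpolation} for the anisotropic spaces to interpolate at the level of operator norms between $\mathcal{L}_c(C_b,C^\gamma_{b,d})$ (already bounded by Corollary~\ref{coroll:C0_Cgamma_control}) and $\mathcal{L}_c(C^\gamma_{b,d},C^\gamma_{b,d})$; your route stays direction-by-direction and interpolates the \emph{restricted} function between $C^{\beta_h}_b(E_h)$ and $C^k_b(E_h)$ via the elementary inequality~\eqref{eq:Interpolation_ineq}, reading off the integer endpoint directly from Theorem~\ref{prop:Control_in_Holder_norm}. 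Your approach buys independence from Theorem~\ref{Thm:Interpolation} at the cost of a small case distinction (you must treat $h=1$, $\beta>1$ separately via the Zygmund characterization), whereas the paper uses the third-order difference $\Delta^3_{x_0}$ uniformly from the outset, which handles all regularity ranges in $(0,3)$ at once and streamlines the argument.
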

\begin{proof}
It is enough to show the result only for $\gamma=\beta$ non-integer, thanks to interpolation techniques. Indeed,
fixed $\beta< \gamma$, we can use Theorem \ref{Thm:Interpolation} to show that
\[\Vert P_t\Vert_{\mathcal{L}_c(C^\beta_{b,d}(\R^N),C^{\gamma}_{b,d}(\R^N))} \,\le \, \Bigl(\Vert
P_t\Vert_{\mathcal{L}_c(C_b(\R^N),C^{\gamma}_{b,d}(\R^N))}\Bigr)^{1-\frac{\beta}{{\gamma}}}\Bigl(\Vert
P_t\Vert_{\mathcal{L}_c(C^{\gamma}_{b,d}(\R^N))}\Bigr)^{\frac{\beta}{{\gamma}}}.\]
On the other hand, if we fix $\gamma$ integer, we can take $\gamma'$ in $(\gamma,1+\alpha)$ non-integer such that Theorem \ref{Thm:Interpolation} implies:
\[\Vert P_t\Vert_{\mathcal{L}_c(C^{\gamma}_{b,d}(\R^N))} \, \le  \Bigl(\Vert
P_t\Vert_{\mathcal{L}_c(C_b(\R^N)}\Bigr)^{1-\frac{\gamma}{\gamma'}}\Bigl(\Vert
P_t\Vert_{\mathcal{L}_c(C^{\gamma'}_{b,d}(\R^N))}\Bigr)^{\frac{\gamma}{\gamma'}}.\]
The general result will then follows from the two above controls and Equation \eqref{eq:coroll:C0_Cgamma_control}, once we
have shown  Estimate \eqref{eq:coroll:Cbeta_Cgamma_control}  for $\gamma=\beta$ non-integer.\newline
Fixed again the time horizon $T$ given in \eqref{def:eq:time_horizon_T}, we start noticing that Control \eqref{eq:coroll:Cbeta_Cgamma_control} for $t\ge T$ has already been shown in Equation \eqref{Proof:eq:COntinuity_Pt_t>1} .\newline
To prove it when $t\le T$, we are going to exploit the equivalent norm defined in \eqref{eq:def_equivalent_norm} of Lemma \ref{lemma:def_equivalent_norm}. For this reason, we fix $h$ in $\llbracket 1,n \rrbracket$, a point $x_0$ in $\R^N$ and
$z\neq 0$ in $E_h(\R^N)$ and we would like to show that
\begin{equation}\label{proof_eq:Corollary1}
\bigl{\vert} \Delta^3_{x_0}\bigl(P_t\phi\bigr)(z) \bigr{\vert}  \, \le \, C\Vert \phi\Vert_{C^\gamma_{b,d}}\vert
z\vert^{\frac{\gamma}{1+\alpha(h-1)}},
\end{equation}
for some constant $C>0$ independent from $x_0$. Before starting with the calculations, we highlight the presence of three
different "regimes" appearing below. On the one hand, we will firstly consider a \emph{macroscopic regime} appearing for $\vert z \vert\ge
1$. On the other hand, we will say that the \emph{off-diagonal regime} holds if
$t^{\frac{1+\alpha(h-1)}{\alpha}} \le \vert z \vert \le 1$. It will mean in particular that the spatial distance is larger than the
characteristic time-scale. Finally, a \emph{diagonal regime} will be in force when $t^{\frac{1+\alpha(h-1)}{\alpha}} \ge \vert z \vert$ and
the spatial point will be instead smaller than the typical time-scale magnitude. While for the two first regimes, we are going to use the
contraction property of the semigroup, the third regime will require to exploit the controls in H\"older norms given by Theorem
\ref{prop:Control_in_Holder_norm}. \newline
As said above, Estimate \eqref{proof_eq:Corollary1} in the macroscopic regime (i.e.\ $\vert z \vert \ge 1$) follows immediately from
the contraction property of $P_t$ on $B_b(\R^N)$. Indeed,
\begin{equation}\label{proof_eq:Corollary1-1}
\bigl{\vert}\Delta^3_{x_0}\bigl(P_t\phi\bigr)(z) \bigr{\vert} \, \le \, C\Vert P_t\phi \Vert_\infty \, \le \, C\Vert
\phi\Vert_{C^\gamma_{b,d}}\vert z\vert^{\frac{\gamma}{1+\alpha(h-1)}}.
\end{equation}
For $t^{\frac{1+\alpha(h-1)}{\alpha}} \le \vert z \vert \le 1$ and $l$ in $\llbracket 0,3\rrbracket$, we start noticing from Equation
\eqref{eq:Representation_semigroup} that
\[
\begin{split}
P_t\phi(x_0+lz) \, &=\, \int_{\R^N}\int_{\R^N}\phi\bigl(\mathbb{M}_t(y_1+y_2)+e^{tA}(x_0+lz)\bigr) p^{\text{tr}}(t,y_1)\, dy_1\pi_t(dy_2)\\
&=\, \int_{\R^N}\int_{\R^N}\phi\bigl(\xi_0+le^{tA}z)\bigr) p^{\text{tr}}(t,y_1)\, dy_1\pi_t(dy_2),
\end{split}
\]
where we have denoted for simplicity $\xi_0=\mathbb{M}_t(y_1+y_2)+e^{tA}x_0$. We can then exploit Lemma \ref{lemma:Decomposition_exp_A} to
write that
\begin{equation}\label{proof_eq:Corollary1-2}
\begin{split}
\bigl{\vert}\Delta^3_{x_0}\bigl(P_t\phi\bigr)(z) \bigr{\vert} \, &\le \,
\int_{\R^N}\int_{\R^N}\bigl{\vert}\Delta^3_{\xi_0}\phi(e^{tA}z) \bigr)\bigr{\vert}
p^{\text{tr}}(t,y_1)\,dy_1\pi_t(dy_2) \\
&\le \,  \pi_t(\R^N)\Vert \phi \Vert_{C^\gamma_{b,d}}\sum_{k=1}^{n}\vert E_ke^{tA}z\vert^{\frac{\gamma}{1+\alpha(k-1)}} \\
&\le \, C\Vert \phi \Vert_{C^\gamma_{b,d}}\Bigl[\sum_{k=1}^{h-1}\bigl(t\vert
z\vert\bigr)^{\frac{\gamma}{1+\alpha(k-1)}}+\sum_{k=h}^{n}\bigl(t^{k-h}\vert z\vert\bigr)^{\frac{\gamma}{1+\alpha(k-1)}}\Bigr] \\
&\le C\Vert \phi \Vert_{C^\gamma_{b,d}}\vert z \vert^{\frac{\gamma}{1+\alpha(h-1)}}.
\end{split}
\end{equation}
For $\vert z \vert\le t^{\frac{1+\alpha(h-1)}{\alpha}}$, we are going to apply Taylor expansion three times in order to make $D^3_{I_h}$
appear. Namely,
\begin{equation}\label{Proof:eq:Taylor_Expansion}
\begin{split}
\bigl{\vert}\Delta^3_{x_0}\bigl(P_t\phi\bigr)(z)\bigr{\vert}\, &= \, \Bigl{\vert}\int_{0}^{1} \langle
D_{I_h}P_t\phi(x_0+\lambda z)-2D_{I_h}P_t\phi(x_0+z+\lambda z)+D_{I_h}P_t\phi(x_0+2z+\lambda z), z \rangle \,d\lambda\Bigr{\vert} \\
&\le \, \Bigl{\vert}\int_{0}^{1}\int_{0}^{1} \langle \bigl[D^2_{I_h}P_t\phi(x_0+(\lambda+\mu)z)-D^2_{I_h}P_t\phi(x_0+z+(\lambda+\mu)z)\bigr]z,
z \rangle \,d\lambda d\mu\Bigr{\vert} \\
&\le \, \Bigl{\vert}\int_{0}^{1}\int_{0}^{1}\int_{0}^{1} \langle \bigl[D^3_{I_h}P_t\phi(x_0+(\lambda+\mu+\nu)z)\bigr](z,z),z \rangle
\,d\lambda d\mu d\nu\Bigr{\vert} \\
&\le \, C\Vert D^3_{I_h}P_t\phi\Vert_\infty \vert z \vert^3 \\
&\le \, C\Vert \phi \Vert_{C^{\gamma}_{b,d}}\bigl(1+t^{\frac{\gamma-3(1+\alpha(h-1))}{\alpha}}\bigr) \vert z \vert^3,
\end{split}
\end{equation}
where in the last step we used Control \eqref{eq:Control_in_Holder_norm_deriv3} with $h=h'={h''}$. Since $\vert z \vert \le
t^{\frac{1+\alpha(h-1)}{\alpha}}$ and noticing that $\gamma-3(1+\alpha(h-1))<0$, it holds that
\[\bigl(1+t^{\frac{\gamma-3(1+\alpha(h-1))}{\alpha}}\bigr)\vert z \vert^3 \, \le \, \vert z
\vert^{\frac{\gamma-3(1+\alpha(h-1))}{1+\alpha(h-1)}}\vert z \vert^3 \, = \, \vert z \vert^{\frac{\gamma}{1+\alpha(h-1)}}.\]
We can then conclude that
\begin{equation}\label{proof_eq:Corollary1-3}
\bigl{\vert}\Delta^3_{x_0}\bigl(P_t\phi\bigr)(z)\bigr{\vert}\, \le \, C\Vert \phi \Vert_{C^\gamma_{b,d}}\vert
z\vert^{\frac{\gamma}{1+\alpha(h-1)}}.
\end{equation}
Going back to Controls \eqref{proof_eq:Corollary1-1}, \eqref{proof_eq:Corollary1-2} and \eqref{proof_eq:Corollary1-3}, we have thus proven
Estimate \eqref{proof_eq:Corollary1} for any non-integer $\gamma=\beta$.
\end{proof}

\setcounter{equation}{0}
\section{Elliptic and Parabolic Schauder Estimates}
In this section, we use the controls shown before to prove Schauder Estimates both for the elliptic and the parabolic equation driven by the Ornstein-Ulhenbeck operator $\mathcal{L}^{\text{ou}}$.

Fixed $\lambda>0$ and $g$ in $C_b(\R^N)$, we say that a function $u\colon \R^N\to \R^N$ is a \emph{weak solution} of Elliptic
Equation \eqref{eq:Elliptic_IPDE} if $u$ is in $C_b(\R^N)$ and for any $\phi$ in $C^\infty_c(\R^N)$ (i.e.\ smooth functions with compact
support), it holds that
\begin{equation}\label{eq:weak_solution_elliptic}
\int_{\R^N}u(x)\bigl[\lambda\phi(x)-\bigl(\mathcal{L}^{ou}\bigr)^*\phi(x)\bigr] \, dx \, = \,
\int_{\R^N}\phi(x)g(x) \, dx,
\end{equation}
where $\bigl(\mathcal{L}^{ou}\bigr)^*$ denotes the formal adjoint of $\mathcal{L}^{ou}$ on $L^2(\R^N)$, i.e.
\begin{equation}\label{eq:def_adjoint_op}
\bigl(\mathcal{L}^{ou}\bigr)^*\phi(x) \, = \, \mathcal{L}^\ast \phi(x) -\langle Ax,D_x \phi(x)\rangle - \text{Tr}(A)\phi(x), \quad
(t,x) \in [0,T]\times \R^N,
\end{equation}
and $\mathcal{L}^\ast$ is the adjoint of the operator $\mathcal{L}$ on $L^2(\R^N)$. It is well-known (see e.g. Section $4.2$ in
\cite{book:Applebaum19}) that it can be represented for any $\phi$ in $C^\infty_c(\R^N)$ as
\[\mathcal{L}^\ast\phi(x) \, = \,\frac{1}{2}\text{Tr}\bigl(BQB^\ast D^2\phi(x)\bigr) -\langle Bb,D \phi(x)\rangle +
\int_{\R^d_0}\bigl[\phi(x-Bz)-\phi(x)+\langle D\phi(x),
Bz\rangle\mathds{1}_{B(0,1)}(z) \bigr] \,\nu(dz).\]
We state now the main result for the elliptic case, ensuring the well-posedness (in a weak sense) for Equation \eqref{eq:Elliptic_IPDE}.

\begin{theorem}
\label{thm:well_posedness_elliptic}
Fixed $\lambda>0$, let $g$ be in $C_b(\R^N)$. Then, the function $u\colon \R^N\to \R$ given by
\begin{equation}\label{eq:representation_elliptic_sol}
u(x) \, := \, \int_{0}^{\infty}e^{-\lambda t}P_tg(x) \, dt, \quad x \in \R^N,
\end{equation}
is the unique weak solution of Equation \eqref{eq:Elliptic_IPDE}.
\end{theorem}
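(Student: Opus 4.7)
The plan is to verify three things: that $u$ defined by \eqref{eq:representation_elliptic_sol} lies in $C_b(\R^N)$, that it satisfies the weak identity \eqref{eq:weak_solution_elliptic}, and that any two weak solutions must coincide. For the first, the contraction $\|P_t g\|_\infty \le \|g\|_\infty$ yields $\|u\|_\infty \le \|g\|_\infty/\lambda$, and the Feller property of $P_t$ (continuity in $x$ of $P_tg$ follows from the OU representation \eqref{eq:Def_OU_Process} via bounded convergence) combined with dominated convergence in $t$ yields $u \in C_b(\R^N)$.

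For the existence part, I would fix $\phi\in C_c^\infty(\R^N)$ and study
\[
F(t) \, := \, e^{-\lambda t}\int_{\R^N} P_tg(x)\,\phi(x)\,dx.
\]
The decisive regularity input is that for every $t>0$, the representation \eqref{eq:Representation_semigroup} together with Proposition \ref{prop:control_on_truncated_density} yields $P_tg \in C^\infty(\R^N)$ with bounded derivatives (the semigroup property extends smoothness from small to all positive times), and that $P_tg$ satisfies the Kolmogorov equation $\partial_t P_tg = \mathcal{L}^{\text{ou}} P_tg$ in the classical sense on $(0,\infty)$. Differentiating $F$ and integrating by parts in $x$ (legitimate since $\phi$ is compactly supported while $P_tg$ is bounded and smooth; the non-local part of $\mathcal{L}$ is handled by Fubini) gives
\[
F'(t) \, = \, e^{-\lambda t}\int P_tg(x)\bigl[(\mathcal{L}^{\text{ou}})^*\phi(x) - \lambda \phi(x)\bigr] dx.
\]
Integrating over $(0,\infty)$, using $F(\infty)=0$ (exponential decay, since $|F(t)|\le e^{-\lambda t}\|g\|_\infty \|\phi\|_{L^1}$) together with $\lim_{t\to 0^+}F(t) = \int g\phi\,dx$ (pointwise convergence $P_tg(x)\to g(x)$ combined with bounded convergence on $\mathrm{supp}(\phi)$), and finally applying Fubini in the resulting double integral, produces precisely \eqref{eq:weak_solution_elliptic}.

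For uniqueness, set $v:=u_1-u_2 \in C_b(\R^N)$, a bounded continuous weak solution of the homogeneous equation $\lambda v - \mathcal{L}^{\text{ou}} v = 0$. The Kalman condition \textbf{[K]} entails hypoellipticity of $\mathcal{L}^{\text{ou}}$ (the non-local extension of H\"ormander's theorem recalled around \eqref{eq:distrib.sol}), so $v$ is in fact smooth. Applying It\^o's formula to $s\mapsto e^{-\lambda s}v(X_s^x)$---the stochastic integrals being genuine martingales thanks to the boundedness of $v$ and of its derivatives against the L\'evy compensator---and taking expectations yields $v(x) = e^{-\lambda t}P_t v(x)$ for every $t\ge 0$; letting $t\to\infty$ and using $\|P_t v\|_\infty \le \|v\|_\infty$ forces $v\equiv 0$. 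The main obstacle I anticipate lies precisely in this last step: rigorously upgrading the weak solution $v$ to a classical one in the present degenerate non-local hypoelliptic framework, and then justifying the martingale character of the stochastic integrals in It\^o's formula, both of which rest on the interplay between the $\alpha$-stable component of $\mathcal{L}$ driving the regularization (via condition \textbf{[ND]}) and the bounded remainder exploited throughout Section $3$.
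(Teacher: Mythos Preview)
Your existence argument is essentially the paper's: both differentiate $t\mapsto e^{-\lambda t}\int P_tg\,\phi\,dx$, invoke $\partial_t P_tg=\mathcal{L}^{\text{ou}}P_tg$ for $t>0$ (justified by the smoothing estimates of Section~3), integrate by parts in $x$, and integrate in $t$ on $(\epsilon,\infty)$ before sending $\epsilon\to0$.

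Your uniqueness argument, however, takes a different route from the paper and the obstacle you flag is real. You want hypoellipticity to upgrade $v\in C_b$ to $v\in C^\infty$ and then run It\^o on $e^{-\lambda s}v(X^x_s)$. Two problems: first, the hypoellipticity statements alluded to around \eqref{eq:distrib.sol} concern $\partial_t-\mathcal{L}^{\text{ou}}$ and are only cited, not proved, for the present class of L\'evy symbols satisfying [\textbf{ND}]; second, even granting $v\in C^\infty$, nothing forces $Dv$ to be bounded, and without that the Brownian and small-jump stochastic integrals in It\^o's formula are only local martingales. Since the OU drift $Ax$ is unbounded, localizing by stopping times and passing to the limit is not straightforward either.

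The paper sidesteps both issues by a mollification argument that stays at the $C_b$ level. One sets $u_m:=u\ast\rho_m$, defines $g_m:=\lambda u_m-\mathcal{L}^{\text{ou}}u_m$ (a smooth, bounded function by construction), and uses the Fourier transform together with the explicit L\'evy symbol $\Psi^{\text{ou}}$ to obtain $u_m=\int_0^\infty e^{-\lambda t}P_tg_m\,dt$, hence $\|u_m\|_\infty\le \lambda^{-1}\|g_m\|_\infty$. The crux is then a commutator computation: using that $u$ is a weak solution with $g=0$, one shows $g_m$ reduces to $[\langle Ax,D_x\rangle,\rho_m\ast]u$ plus terms that vanish identically (the pure L\'evy part commutes with convolution), and this commutator tends to zero uniformly because the linear vector field $Ax$ interacts with the mollifier in a controlled way. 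This avoids any appeal to interior regularity of $v$ or to martingale properties.
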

\begin{proof}
\emph{Existence.} We are going to show that the function $u$ given in Equation \eqref{eq:representation_elliptic_sol} is indeed a
weak solution of the elliptic problem \eqref{eq:Elliptic_IPDE}. It is straightforward to notice that $u$ is in $C_b(\R^N)$, thanks to the contraction property of $P_t$ on $C_b(\R^N)$. Fixed $\phi$ in $C^\infty_c(\R^N)$, we then use Fubini Theorem to write that
\[
\begin{split}
\int_{\R^N}u(x)\bigl(\mathcal{L}^{\text{ou}}\bigr)^\ast\phi(x) \, dx \, &= \,
\lim_{\epsilon\to0^+}\int_{\epsilon}^{\infty}\int_{\R^N}e^{-\lambda t}
P_tg(x)\bigl(\mathcal{L}^{\text{ou}}\bigr)^\ast\phi(x) \, dx dt \\
&= \,\lim_{\epsilon\to0^+}\int_{\epsilon}^{\infty}\int_{\R^N}e^{-\lambda t}
\mathcal{L}^{\text{ou}}P_tg(x)\phi(x) \, dx dt,
\end{split}
\]
where, in the last step, we exploited that $P_tg$ is differentiable and bounded for $t>0$ (Proposition
\ref{prop:control_in_inf_norm}). Since $\mathcal{L}^{\text{ou}}$ is the infinitesimal generator of the semigroup $\{P_t\colon
t\ge0\}$, we know that $\partial_t(P_tg)$ exists for any $t>0$ and $\partial_t(P_tg)(x)=\mathcal{L}^{\text{ou}}P_tg(x)$ for any $x$ in
$\R^N$. Integration by parts formula allows then to conclude that
\[
\begin{split}
\int_{\R^N}u(x)\bigl(\mathcal{L}^{\text{ou}}\bigr)^\ast\phi(x) \, dx \, &=
\,\lim_{\epsilon\to0^+}\int_{\R^N}\phi(x)\int_{\epsilon}^{\infty}e^{-\lambda
t}\partial_tP_tg(x) \, dt dx \\
&= \, \lim_{\epsilon\to0^+}\int_{\R^N}\Bigl(-e^{-\lambda \epsilon}P_\epsilon g(x)+\lambda\int_{\epsilon}^{\infty}e^{-\lambda t}P_t g(x) \,
dt\Bigr) \, dx \\
&= \, \int_{\R^N}-g(x)\phi(x) \, dx + \int_{\R^N}\lambda u(x)\phi(x) \, dx.
\end{split}
\]

\emph{Uniqueness.} It is enough to show that any weak solution $u$ of Equation \eqref{eq:Elliptic_IPDE} for $g=0$ coincides with
the zero function, i.e.\ $u=0$. To do so, we fix a function $\rho$ in $C^{\infty}_c(\R^N)$ such that $\Vert
\rho \Vert_{L^1}=1$, $0\le \vert \rho \vert \le 1$ and we then define the \emph{mollifier} $\rho_m:=m^N\rho(mx)$ for any $m$ in $\N$. Denoting now, for simplicity, $u_m:=u\ast \rho_m$, we define the function
\begin{equation}\label{Proof:eq:def_gm}
g_m(x) \, := \, \lambda u_m(x)-\mathcal{L}^{\text{ou}}u_m(x).
\end{equation}
Using that $u$ is in $C_b(\R^N)$, it is easy to notice that $g_m$ is also in $C_b(\R^N)$ for any fixed $m$ in $\N$.
Truncating the functions if necessary, we can assume that $u_m$ and $g_m$ are integrable with integrable Fourier transform so that we can
apply the Fourier transform in Equation \eqref{Proof:eq:def_gm}:
\begin{equation}\label{proof:Schauder_elliptic1}
\lambda\widehat{u}_m(\xi)- \mathcal{F}_x \bigl(\mathcal{L}^{\text{ou}} u_m\bigr)(\xi) \, = \, \widehat{g}_m(\xi).
\end{equation}
We remember in particular that the above operator $\mathcal{L}^{\text{ou}}$ has an associated L\'evy symbol
$\Psi^{ou}(\xi)$ and, following Section $3.3.2$ in \cite{book:Applebaum09}, it holds that
\begin{equation}\label{proof:Schauder_elliptic2}
\mathcal{F}_x \bigl(\mathcal{L}^{\text{ou}} u_m\bigr)(\xi) \, = \,
\Psi^{ou}(\xi)\widehat{u}_m(\xi).
\end{equation}
We can then use it to show that $\widehat{u}_m$ is a classical solution of the following equation:
\[\bigl[\lambda- \Psi^{ou}(\xi)\bigr]\widehat{u}_m(\xi) \, = \, \widehat{g}_m(\xi).\]
The above equation can be easily solved by direct calculation as
\[\widehat{u}_m(\xi) \, = \, \int_{0}^{\infty} e^{-\lambda t}e^{t\Psi^{ou}(\xi)} \widehat{g}_m(\xi)\, ds.\]
In order to go back to $u_m$, we apply now the inverse Fourier transform to write that
\[u_m(x) \, = \, \int_{0}^{\infty}e^{-\lambda t}P_{t}g_m(x) \, dt.\]
We can then exploit the contraction property of the semigroup $P_t$  to show that $\Vert u_m \Vert_\infty \le C \Vert g_m \Vert_\infty$.
In order to conclude, we need to show that
\begin{equation}\label{Proof:convergence_gm}
\lim_{m \to \infty}\Vert g_m \Vert_{\infty} \, = \,  0.
\end{equation}
We start noticing that, since $u$ is a weak solution of Equation \eqref{eq:Elliptic_IPDE} with $g=0$, it holds that
\[
\begin{split}
g_m(x) \, &= \, \int_{\R^N}u(y)\bigl{\{}\lambda \rho_m(x-y) - \mathcal{L}[\rho_m(\cdot-y)](x)-\langle Ax,D_x\rho_m(x-y)\rangle\bigr{\}} \,
dy\\
&= \, \int_{\R^N}u(y)\bigl{\{}\mathcal{L}^\ast[\rho_m(x-\cdot)](y) - \mathcal{L}[\rho_m(\cdot-y)](x)+\langle
A(x-y),D_x\rho_m(x-y)\rangle+\text{Tr}(A)\rho_m(x-y)\bigr{\}} \, dy\\
&= \, R^1_m(x)+R^2_m(x)+R^3_m(x),
\end{split}\]
where we have denoted
\begin{align*}
  R^1_m(x) \, &:= \, \int_{\R^N}u(y)\bigl[\mathcal{L}^\ast[\rho_m(x-\cdot)](y) - \mathcal{L}[\rho_m(\cdot-y)](x)\bigr] \, dy;  \\
  R^2_m(x) \, &:= \,  \int_{\R^N}u(y)\langle A(x-y),D_x\rho_m(x-y)\rangle \, dy;\\
  R^3_m(x) \, &:= \,  \int_{\R^N}u(y)\text{Tr}(A)\rho_m(x-y) \, dy.
\end{align*}
On the one hand, it is easy to notice that $R^1_m=0$, since $\mathcal{L}^\ast[\rho_m(x-\cdot)](y)=\mathcal{L}[\rho_m(\cdot-y)](x)$ for any
$m$ in $\N$ and any $y$ in $\R^N$. Indeed, it holds that
\[\frac{1}{2}\text{Tr}\bigl(BQB^\ast D^2_y[\rho_m(x-\cdot)](y)\bigr)-\langle Bb,D_y[\rho_m(x-\cdot)](y)\rangle \, = \,
\frac{1}{2}\text{Tr}\bigl(BQB^\ast D^2_x\rho_m(x-y)\bigr)+\langle Bb,D_x\rho_m(x-y)\rangle\]
and
\begin{multline*}
  \int_{\R^d_0}\bigl[\rho_m(x-y+Bz)-\rho_m(x-y)+\langle
D_y[\rho_m(x-\cdot)](y),Bz\rangle\mathds{1}_{B(0,1)}(z)\bigr]\, \nu(dz) \\
= \, \int_{\R^d_0}\bigl[\rho_m((x+Bz)-y)-\rho_m(x-y)-\langle
D_x\rho_m(x-y),Bz\rangle\mathds{1}_{B(0,1)}(z)\bigr]\, \nu(dz).
\end{multline*}
On the other hand, it can be checked (see e.g. \cite{Priola09}) that $\Vert R^2_m+R^3_m\Vert_\infty\to 0$ if $m$ goes to infinity.
Indeed, we firstly notice that $R^3_m$ converges, when $m$ goes to infinity, to the function
$u\text{Tr}(A)$, uniformly in $x$. On the other hand, applying the change of variables $y=x-z/m$ in $R^2_m$, we can obtain that
\[R^2_m(x) \, = \,  m\int_{\R^N}u(x-z/m)\langle A(z/m),D_x\rho(z)\rangle \, dy.\]
Letting $m$ goes to infinity above, we can then conclude that $R^2_m$ converges to the function $-u\text{Tr}(A)$, uniformly in $x$.
\end{proof}

Let us deal now with the parabolic setting. Since we are working with evolution equations, the functions we consider will often depend
on time, too. We denote for any $\gamma>0$ the space $L^\infty(0,T,C^{\gamma}_{b,d}(\R^{N}))$ as the family of functions
$\phi$ in $B_b\bigl([0,T]\times \R^{N})$ such that $\phi(t,\cdot)$ is in $C^{\gamma}_{b,d}(\R^N)$ at any fixed $t$ and the norm
\[\Vert \phi \Vert_{L^\infty(C^\gamma_{b,d})} \, := \, \sup_{t\in [0,T]}\Vert \phi(t,\cdot)\Vert_{C^\gamma_{b,d}} \text{ is finite.}\]

We define now the notion of solution we are going to consider. Fixed $T>0$, $u_0$ in $C_b(\R^N)$ and $f$ in
$L^\infty\bigl(0,T;C_b(\R^N)\bigr)$, we
say that a function $u\colon [0,T]\times\R^N\to \R^N$ is a \emph{weak solution} of the Cauchy problem \eqref{eq:Parabolic_IPDE} if $u$ is in
$L^\infty\bigl(0,T;C_b(\R^N)\bigr)$ and for any $\phi$ in $C^\infty_c([0,T)\times\R^N)$, it holds that
\begin{equation}\label{eq:weak_solution_Parabolic}
\int_{\R^N}u_0(x)\phi(0,x)\, dx + \int_{0}^{T}\int_{\R^N}u(t,x)\Bigl[\partial_t\phi(t,x)+\bigl(\mathcal{L}^{ou}\bigr)^*\phi(t,x)\Bigr]
+f(t,x)\phi(t,x)\, dxdt \, = \, 0,
\end{equation}
where $\bigl(\mathcal{L}^{ou}\bigr)^*$ denotes the formal adjoint of $\mathcal{L}^{ou}$ on $L^2(\R^N)$ given in Equation
\eqref{eq:def_adjoint_op}.

Similarly to the elliptic setting, we show firstly the weak well-posedness of the Cauchy problem \eqref{eq:Parabolic_IPDE}.

\begin{theorem}
\label{thm:well_posedness_parabolic}
Fixed $T>0$, let $u_0$ be a function in $C_b(\R^N)$ and $f$ in $L^\infty\bigl(0,T;C_b(\R^N)\bigr)$. Then, the function ${u\colon [0,T]\times\R^N\to
\R}$ given by
\begin{equation}\label{eq:representation_parabolic_sol}
u(t,x) \, := \, P_tu_0(x) + \int_{0}^{t}P_{t-s}f(s,x) \, ds, \quad (t,x) \in [0,T]\times\R^N,
\end{equation}
is the unique weak solution of the Cauchy problem \eqref{eq:Parabolic_IPDE}.
\end{theorem}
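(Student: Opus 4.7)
The plan is to mirror Theorem~\ref{thm:well_posedness_elliptic}: directly verify that the Duhamel formula \eqref{eq:representation_parabolic_sol} satisfies the weak formulation \eqref{eq:weak_solution_Parabolic} for existence, and use a spatial mollification argument combined with the contraction of $\{P_t\}_{t\ge 0}$ for uniqueness.

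\textbf{Existence.} That $u\in L^\infty(0,T;C_b(\R^N))$ is immediate from the contraction of $P_t$ on $C_b(\R^N)$. Fix $\phi\in C_c^\infty([0,T)\times\R^N)$ and split $u=u^{(1)}+u^{(2)}$ with $u^{(1)}(t,x)=P_tu_0(x)$ and $u^{(2)}(t,x)=\int_0^t P_{t-s}f(s,x)\,ds$. For $u^{(1)}$, Fubini and the adjoint identity $\int (P_tu_0)(\mathcal{L}^{ou})^*\phi\,dx=\int (\mathcal{L}^{ou}P_tu_0)\phi\,dx$ (legitimate since $P_tu_0$ is smooth for $t>0$ by Proposition~\ref{prop:control_in_inf_norm}), combined with $\partial_tP_tu_0=\mathcal{L}^{ou}P_tu_0$ and integration by parts in $t$, yield
\[
\int_0^T\!\int_{\R^N} u^{(1)}[\partial_t\phi+(\mathcal{L}^{ou})^*\phi]\,dxdt=-\int_{\R^N} u_0(x)\phi(0,x)\,dx,
\]
the upper boundary term vanishing since $\phi(T,\cdot)=0$. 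For $u^{(2)}$, swapping the order of integration in $(s,t)$, setting $\tau=t-s$, and performing the same $t$-integration by parts produces $-\int_0^T\!\int_{\R^N} f(s,x)\phi(s,x)\,dxds$, where the $f$-term arises from the $\tau=0$ boundary (where $P_0=\mathrm{Id}$). Summing the two contributions delivers \eqref{eq:weak_solution_Parabolic}.

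\textbf{Uniqueness.} Let $u$ be a weak solution with $u_0=0$ and $f=0$; let $\rho_m:=m^N\rho(m\cdot)$ be a spatial mollifier and set $u_m(t,x):=(u(t,\cdot)\ast\rho_m)(x)$, which is $C^\infty$ in $x$. Testing \eqref{eq:weak_solution_Parabolic} with $\phi(t,y)=\rho_m(x-y)\psi(t)$, $\psi\in C_c^\infty((0,T))$, and using the identity $\mathcal{L}_y^*[\rho_m(x-\cdot)](y)=\mathcal{L}_x[\rho_m(\cdot-y)](x)$ together with the reorganization of the $A$-terms appearing in the elliptic proof, one obtains the distributional equation in $t$
\[
\partial_tu_m(t,x)=\mathcal{L}^{ou}_xu_m(t,x)+r_m(t,x),\qquad u_m(0,\cdot)\equiv 0,
\]
with $r_m=R^2_m+R^3_m$ precisely as in Theorem~\ref{thm:well_posedness_elliptic}, and $\sup_{t\in[0,T]}\|r_m(t,\cdot)\|_\infty\to 0$ by the same change-of-variable argument ($y=x-z/m$ in $R^2_m$ and cancellation with $R^3_m$). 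The function
\[
\Phi_m(t,x):=u_m(t,x)-\int_0^tP_{t-s}r_m(s,x)\,ds
\]
then solves $\partial_t\Phi_m=\mathcal{L}^{ou}\Phi_m$ in a classical sense with $\Phi_m(0,\cdot)\equiv 0$; uniqueness for this homogeneous Cauchy problem (e.g.\ via the stochastic representation $\Phi_m(t,x)=\mathbb{E}[\Phi_m(0,X_t^x)]$ using the Ornstein-Uhlenbeck process from Proposition~\ref{prop:Decomposition_Process_X}) forces $\Phi_m\equiv 0$. The contraction of $P_t$ then yields $\|u_m(t,\cdot)\|_\infty\le T\sup_s\|r_m(s,\cdot)\|_\infty\to 0$. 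Since $u(t,\cdot)\in C_b(\R^N)$ is uniformly continuous, $u_m(t,\cdot)\to u(t,\cdot)$ uniformly in $x$, so $u\equiv 0$.

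\textbf{Main obstacle.} The delicate point is justifying that the distributional identity $\partial_tu_m=\mathcal{L}^{ou}u_m+r_m$ can be upgraded to the classical Duhamel representation $u_m=\int_0^tP_{t-s}r_m\,ds$, since $u\in L^\infty(0,T;C_b(\R^N))$ need not be continuous in $t$ a priori. One handles this either by an additional time-regularization $u_{m,n}:=u_m\ast_t\eta_n$ and passage to the limit $n\to\infty$, or by testing the distributional equation against a suitable approximate identity in $t$; the smoothness of $u_m$ in $x$ together with the semigroup property of $P_t$ is exactly what makes this passage rigorous. The $r_m\to 0$ estimate is inherited verbatim from the elliptic proof.
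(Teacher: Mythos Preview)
Your argument is correct but differs from the paper's in both halves. For \emph{existence}, the paper regularizes the data $u_0,f$ by spatial mollification, checks that the mollified Duhamel formula $u_m$ is a \emph{classical} solution, writes the weak formulation for $u_m$, and then passes to the limit $m\to\infty$, controlling the non-local remainder terms via the compact support of the test function. Your direct verification (splitting $u=u^{(1)}+u^{(2)}$ and integrating by parts using the smoothness of $P_t g$ for $t>0$) is more economical and avoids the limit procedure entirely; the paper's route is slightly safer in that it never has to justify the adjoint identity on a merely bounded (not integrable) function, but since $\phi$ is compactly supported your version goes through as well. For \emph{uniqueness}, the paper mollifies in \emph{space and time} from the start, so $u_m$ is automatically $C^1$ in $t$ and the obstacle you flag simply does not arise; it then uses the Fourier transform (as in the elliptic Theorem~\ref{thm:well_posedness_elliptic}) to obtain $u_m(t,x)=\int_0^t P_{t-s}f_m(s,x)\,ds$ directly, rather than your subtraction argument with $\Phi_m$. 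Your suggested fix---an additional time-regularization $u_{m,n}=u_m\ast_t\eta_n$---is precisely what the paper builds in from the outset, so the two uniqueness proofs converge once that step is taken. In short: your existence is leaner, the paper's uniqueness is cleaner, and both are valid.
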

\begin{proof}
\emph{Existence.} We start considering a "regularized" version of the coefficients appearing in Equation \eqref{eq:Parabolic_IPDE}.
Namely, we consider a family $\{u_{0,m}\}_{m\in \N}$ in $C^\infty_b(\R^N)$ such that $u_{0,m}\to u_0$ uniformly in
$x$ and a family $\{f_m\}_{m\in \N}$ in $L^\infty\bigl(0,T;C^\infty_b(\R^N)\bigr)$ such that $f_m\to f$ uniformly in $t$ and $x$. They can
be obtained through standard mollification methods in space.\newline
Fixed $m$ in $\N$, we denote now by $u_m\colon[0,T]\times\R^n\to\R$ the function given by
\[u_m(t,x) \, := \, P_tu_{0,m}(x) + \int_{0}^{t}P_{t-s}f_m(s,x) \, ds, \quad t \in [0,T], x \in \R^N.\]
On the one hand, we use again that $\partial_t(P_tu_m)(t,x)=\mathcal{L}^{\text{ou}}P_tu_m(t,x)$ for any $(t,x)$ in
$[0,T]\times\R^N$ to check that $u_m$ is indeed a \emph{classical} solution of the "regularized" Cauchy Problem:
\[
\begin{cases}
  \partial_tu_m(t,x) \, = \, \mathcal{L}^{\text{ou}}u_m(t,x)+f_m(t,x), \quad (t,x) \in (0,T)\times \R^N; \\
  u_m(0,x) \, = \, u_{0,m}(x), \quad x \in \R^N.
\end{cases}
\]
On the other hand, we exploit the linearity and the continuity of the semigroup $P_t$ on $C_b(\R^N)$ to show that
\[u_m\, = \, P_tu_{0,m}(x) + \int_{0}^{t}P_{t-s}f_m(s,x) \, ds \,  \overset{m}{\to} \, P_tu_0(x) + \int_{0}^{t}P_{t-s}f(s,x) \, ds \, = \, u,\]
uniformly in $t$ and $x$, where $u$ is the function given in \eqref{eq:representation_parabolic_sol}. \newline
We fix now a test function $\phi$ in $C^\infty_0\bigl([0,T)\times\R^N\bigr)$ and we then notice that
\[\int_{0}^{T}\int_{\R^N}\phi(t,y)\Bigl(\partial_t-\mathcal{L}^{ou}\Bigr)u_m(t,y) \, dydt \, = \,
\int_{0}^{T}\int_{\R^N}\phi(t,y)f_m(t,y) \, dydt.\]
An integration by parts allows now to move the operator to the test function, being careful to remember that $u_m(0,\cdot)=u_{0,m}(\cdot)$.
Indeed, it holds that
\begin{equation}\label{TOweak:1}
-\int_{0}^{T}\int_{\R^N}\Bigl(\partial_t+\bigl(\mathcal{L}^{ou}\bigr)^*\Bigr)\phi(t,y)u_m(t,y) \, dydt \, = \,
\int_{\R^N}\phi(0,y)u_{0,m}(y) \, dy + \int_{0}^{T}\int_{\R^N}\phi(t,y)f_m(t,y) \, dydt,
\end{equation}
where $\bigl(\mathcal{L}^{ou}\bigr)^*$ denotes the formal adjoint of $\mathcal{L}^{ou}$ on $L^2(\R^N)$.\newline
We would like now to go back to the solution $u$, letting $m$ go to infinity. We start rewriting the right-hand side term of \eqref{TOweak:1} as $R^1_m+R^2_m$,
where
\begin{align*}
R^1_m \, &:= \,\int_{\R^N}\phi(0,y)u_{0,m}(y) \, dy;   \\
R^2_m \, &:= \, \int_{0}^{T}\int_{\R^N}\phi(t,y)f_m(t,y) \, dydt.
\end{align*}
We can rewrite $R^2_m$ as
\[R^2_m \, = \, \int_{0}^{T}\int_{\R^N}\phi(t,y)f(t,y) \, dydt + \int_{0}^{T}\int_{\R^N}\phi(t,y)\bigl[f_m-f\bigr](t,y) \, dydt.\]
Exploiting that, by assumption, $f_m$ converges to $f$ uniformly in $t$ and $x$, it is easy to see that the second
contribution above converges to $0$. A similar argument can be used to show that
\[\int_{\R^N}\phi(0,y)u_{0,m}(y) \, dy \, \overset{m}{\to} \, \int_{\R^N}\phi(0,y)u_{0}(y) \, dy.\]
On the other hand, we can rewrite the left-hand side of Equation \eqref{TOweak:1} as
\[-\int_{0}^{T}\int_{\R^N}\Bigl(\partial_t+\bigl(\mathcal{L}^{ou}\bigr)^*\Bigr)\phi(t,y)u_m(t,y) \, dydt \,  = \,
-\int_{0}^{T}\int_{\R^N}\Bigl(\partial_t+\bigl(\mathcal{L}^{ou}\bigr)^*\Bigr)\phi(t,y)u(t,y) \, dydt +L^1_m+L^2_m+L^3_m,\]
where we have denoted
\begin{align}
L^1_m \, &:= \, \int_{0}^{T}\int_{\R^N} \bigl[\frac{1}{2}\text{Tr}\bigl(BQB^\ast D^2_y\phi(t,y)\bigr)+\langle Ay+Bb,D_y\phi(t,y)\rangle+\text{Tr}(A)\phi(t,y)\bigr][u_m-u](t,y) \, dydt;   \notag\\
L^2_m \, &:= \, \int_{0}^{T}\int_{\R^N}\partial_t\phi(t,y)[u-u_m](t,y) \, dydt; \label{eq:remainder_in_existence}\\
L^3_m \, &:= \, \int_{0}^{T}\int_{\R^N}[u-u_m](t,y)\Bigl[\int_{\R^d_0}\phi(t,y-Bz)-\phi(t,y)+\langle
D_y\phi(t,y),Bz\rangle\mathds{1}_{B(0,1)}(z)\, \nu(dz)\Bigr]dydt. \notag
\end{align}
To conclude, we need to show that the remainder $L^1_m+L^2_m+L^3_m$ is negligible, if $m$ goes to infinity. Exploiting that $\phi$ has a
compact support and that $\Vert u_m - u\Vert_\infty\overset{m}{\to} 0$, it is easy to show that
$\vert L^1_m+L^2_m\vert \overset{m}{\to} 0$.\newline
In order to control $L^3_m$, we need firstly to decompose it as $L^{3,1}_m+L^{3,2}_m$, where
\begin{align*}
L^{3,1}_m \, &:= \, \int_{0}^{T}\int_{\R^N}\bigl[u-u_m\bigr](t,y)\Bigl[\int_{0<\vert z\vert<1}\phi(t,y-Bz)-\phi(t,y)+\langle
D_y\phi(t,y),Bz\rangle\, \nu(dz)\Bigr]dydt;   \\
L^{3,2}_m \, &:= \, \int_{0}^{T}\int_{\R^N}\bigl[u-u_m\bigr](t,y)\Bigl[\int_{\vert z\vert>1}\phi(t,y-Bz)-\phi(t,y)\, \nu(dz)\Bigr]dydt.
\end{align*}
The second term $L^{3,2}_m$ can be controlled easily using the Fubini Theorem. Indeed, denoting by $K$ the support of $\phi$ and by $\lambda$ the Lebesgue measure on $\R^N$, we notice that
\[
\begin{split}
\vert L^{3,2}_m\vert \, &\le \, \Vert u-u_m\Vert_\infty\int_{0}^{T}\int_{\vert z\vert>1}\int_{\R^N}\vert\phi(t,y-Bz)-\phi(t,y)\vert\,
dy\nu(dz)dt\\
&\le \,  CT2\lambda(K)\nu\bigl(B^c(0,1)\bigr)\Vert u-u_m\Vert_\infty.
\end{split}
\]
Exploiting that $\nu\bigl(B^c(0,1)\bigr)$ is finite since $\nu$ is a L\'evy measure, we can then conclude that $\vert L^{3,2}_m\vert$ tends to zero if $m$ goes to infinity.\newline
The argument for $L^{3,1}_m$ is similar but we need firstly to apply a Taylor expansion twice to make a term $\vert z\vert^2$ appear in the
integral and exploit that $\vert z\vert^2\nu(dz)$ is finite on $B(0,1)$.

\emph{Uniqueness.} This proof will follow essentially the same arguments as for Theorem \ref{thm:well_posedness_elliptic}.\newline
Let $u$ be any weak solution of Cauchy problem \eqref{eq:Parabolic_IPDE} with $u_0=f=0$. We are going to show
that $u=0$. \newline
We start considering a mollyfing sequence $\{\rho_m\}_{m \in \N}$ in $C^\infty_c((0,T)\times\R^N)$. Denoting for simplicity $u_m(t,x)=u\ast \rho_m(t,x)$, we then notice that $u_m$ is
continuously differentiable in time and that $u_m(0,x)=0$. It makes sense to define now the function
\begin{equation}\label{Proof:eq:def_fm}
f_m(t,x) \, := \, \partial_t u_m(t,x)-\mathcal{L}^{\text{ou}}u_m(t,x).
\end{equation}
Moreover, we can truncate $f_m$ and $u_m$ if necessary, so that they are integrable with integrable Fourier transform. Then, the same
reasoning in Equations \eqref{proof:Schauder_elliptic1}, \eqref{proof:Schauder_elliptic2} allows us to write that
\[
\begin{cases}
   \partial_t \widehat{u}_{m}(t,\xi)- \Psi^{ou}(\xi)\widehat{u}_m(t,\xi) \, = \, \widehat{f}_{m}(t,\xi), \\
    \widehat{u}_{m}(0,\xi) \, = \, 0.
  \end{cases}
\]
The above equation can be easily solved integrating in time, giving the following representation:
\[\widehat{u}_m(t,\xi) \, = \, \int_{0}^{t} e^{(t-s)\Psi^{ou}(\xi)}
\widehat{f}_m (s,\xi)\, ds.\]
In order to go back to $u_m$, we apply now the inverse Fourier transform to write that
\[u_m(t,x) \, = \, \int_{0}^{t}P_{t-s}f_m(s,x) \, ds.\]
The contraction property of $P_t$  allows us to conclude that $\Vert u_m \Vert_\infty \, \le \, C \Vert f_m\Vert_\infty$.
Letting $m$ goes to zero, we obtain the desired result. Indeed, we can rely on the same reasonings used in the analogous elliptic case
(Theorem \ref{thm:well_posedness_elliptic}) to show that
\[\lim_{m \to \infty}\Vert f_m \Vert_{\infty} \, = \,  0.\qedhere\]
\end{proof}

The next two conclusive theorems provide the Schauder estimates both in the elliptic and in the parabolic setting.

\begin{theorem}[Elliptic Schauder Estimates]
Fixed $\lambda>0$ and $\beta$ in $(0,1)$, let $g$ be in $C^{\alpha+\beta}_{b,d}(\R^N)$. Then, the unique solution $u$ of Equation
\eqref{eq:Elliptic_IPDE} is in $C^{\beta}_{b,d}(\R^N)$ and there exists a
constant $C:=C(\lambda)>0$ such that
\begin{equation}\label{eq:Schauder_Estim_elliptic}
\Vert u \Vert_{C^{\alpha+\beta}_{b,d}} \, \le \, C\Vert g \Vert_{C^{\beta}_{b,d}}.
\end{equation}
\end{theorem}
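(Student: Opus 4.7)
The strategy is to exploit the semigroup representation
\[
u(x) \,=\, \int_0^\infty e^{-\lambda t}\,P_t g(x)\, dt
\]
coming from Theorem~\ref{thm:well_posedness_elliptic}. The $L^\infty$ bound $\Vert u\Vert_\infty\le \lambda^{-1}\Vert g\Vert_\infty$ is immediate from the contraction of $\{P_t\}$ on $C_b(\R^N)$, so the real work lies in controlling the anisotropic H\"older seminorm of $u$. Using the equivalent characterization given by Lemma~\ref{lemma:def_equivalent_norm}, the task reduces to proving
\[
\bigl|\Delta^3_{x_0}u(z)\bigr| \,\le\, C\,\Vert g\Vert_{C^\beta_{b,d}}\,|z|^{(\alpha+\beta)/(1+\alpha(h-1))}
\]
uniformly in $x_0\in\R^N$, $h\in\llbracket 1,n\rrbracket$, and $z\in E_h(\R^N)\setminus\{0\}$.

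A naive use of Corollary~\ref{corollary:continuity_between_holder} with $\gamma=\alpha+\beta$ produces the non-integrable singularity $\int_0^\infty e^{-\lambda t}t^{-1}\,dt$, so a finer analysis is required. I would commute $\Delta^3_{x_0}$ with the time integral and reproduce the three-regime splitting already used in the proof of the Corollary, separating the time domain at the anisotropic scale $T(z):=|z|^{\alpha/(1+\alpha(h-1))}$ whenever $|z|<1$. For the macroscopic regime $|z|\ge 1$, the pointwise contraction bound $|\Delta^3_{x_0}P_tg(z)|\le C\Vert g\Vert_\infty$ is enough. On the off-diagonal window $[0,T(z)]$, I would reuse~\eqref{proof_eq:Corollary1-2} but with $\gamma=\beta$ in place of $\alpha+\beta$, yielding
\[
|\Delta^3_{x_0}P_tg(z)| \,\le\, C\,\Vert g\Vert_{C^\beta_{b,d}}\sum_{k=1}^n |E_k e^{tA}z|^{\beta/(1+\alpha(k-1))},
\]
and then bound each $|E_ke^{tA}z|$ by Lemma~\ref{lemma:Decomposition_exp_A}, checking term by term that the time integral over $[0,T(z)]$ contributes at most $C|z|^{(\alpha+\beta)/(1+\alpha(h-1))}$. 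On the diagonal window $[T(z),\infty)$, I would carry out the triple Taylor expansion of~\eqref{Proof:eq:Taylor_Expansion} along the direction $I_h$ and invoke the third-derivative estimate~\eqref{eq:Control_in_Holder_norm_deriv3} with $h=h'=h''$: the singular factor $t^{(\beta-3(1+\alpha(h-1)))/\alpha}$ has exponent strictly less than $-1$ (thanks to $\alpha<2$ and $\beta<1$), so the integral is dominated by its lower endpoint, and an explicit computation produces $|z|^3\,T(z)^{1+(\beta-3(1+\alpha(h-1)))/\alpha}=|z|^{(\alpha+\beta)/(1+\alpha(h-1))}$.

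The main obstacle will be the combinatorial bookkeeping in the off-diagonal window: the mixed contributions from the components $E_ke^{tA}z$ with $k\ne h$ generate mixed powers in $t$ and $|z|$, and one must verify, using the relation $t\le T(z)$ and the component-wise bounds of Lemma~\ref{lemma:Decomposition_exp_A}, that every such combination gives an exponent on $|z|$ at least equal to $(\alpha+\beta)/(1+\alpha(h-1))$. Gathering the three regime estimates, summing over $h$ as prescribed by~\eqref{eq:def_anistotropic_norm}, and adding the trivial $L^\infty$ bound yields the desired Schauder estimate, with the constant $C$ depending on $\lambda$ through the tail integrals in the diagonal regime.
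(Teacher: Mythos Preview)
Your proposal is correct and follows the same overall architecture as the paper: representation formula, equivalent norm via $\Delta^3$, macroscopic regime $|z|\ge 1$, and for $|z|<1$ a split at the transition time $t_0=|z|^{\alpha/(1+\alpha(h-1))}$, with the diagonal window handled exactly as you describe via \eqref{Proof:eq:Taylor_Expansion} and \eqref{eq:Control_in_Holder_norm_deriv3}.

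The one place you make your life harder than necessary is the off-diagonal window $[0,t_0]$. You propose to reopen the computation inside \eqref{proof_eq:Corollary1-2}, carry the sum $\sum_k |E_k e^{tA}z|^{\beta/(1+\alpha(k-1))}$, and then verify term by term that the time integral gives the right exponent; you correctly identify this ``combinatorial bookkeeping'' as the main obstacle. The paper bypasses it entirely: it invokes Corollary~\ref{corollary:continuity_between_holder} with $\gamma=\beta$ (continuity of $P_t$ on $C^\beta_{b,d}$) as a black box, so that $|\Delta^3_{x_0}(P_t g)(z)|\le C\Vert g\Vert_{C^\beta_{b,d}}|z|^{\beta/(1+\alpha(h-1))}$ uniformly in $t$, and then simply integrates the constant over $[0,t_0]$ to pick up the missing factor $t_0=|z|^{\alpha/(1+\alpha(h-1))}$. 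This yields the claimed bound in one line and avoids all the cross-component exponent checks you anticipate.
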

\begin{proof}
Thanks to Theorem \ref{thm:well_posedness_elliptic}, we know that the unique solution $u$ of the elliptic equation \eqref{eq:Elliptic_IPDE} is given in \eqref{eq:representation_elliptic_sol}. In order to show that such a function $u$ satisfies Schauder estimates
\eqref{eq:Schauder_Estim_elliptic}, we exploit again the equivalent norm defined in \eqref{eq:def_equivalent_norm} of Lemma \ref{lemma:def_equivalent_norm}. Namely, we fix $h$ in $\llbracket 1,n\rrbracket$ and $x_0$ in $\R^N$ and we show that
\[\vert \Delta^3_{x_0}u(z)\vert \, = \, \Bigl{\vert}\int_{0}^{\infty}e^{-\lambda t}\Delta^3_{x_0}\bigl(P_tg\bigr)(z) \, dt\Bigr{\vert} \,
\le \, C\Vert g \Vert_{C^\beta_{b,d}}\vert z\vert^{\frac{\alpha+\beta}{1+\alpha(h-1)}},\quad z\in E_h(\R^N),\]
for some constant $C>0$ independent from $x_0$. For $\vert z \vert\ge 1$, it can be obtained easily from the contraction property of $P_t$
on $B_b(\R^N)$:
\begin{equation}\label{Proof:Schauder:z>1}
\Bigl{\vert}\int_{0}^{\infty}e^{-\lambda t}\Delta^3_{x_0}\bigl(P_tg\bigr)(z) \, dt\Bigr{\vert} \, \le \, C\Vert P_tg\Vert_\infty\, \le \,
C\Vert g \Vert_\infty\vert z \vert^{\frac{\alpha+\beta}{1+\alpha(h-1)}}.
\end{equation}
When $\vert z \vert\le 1$, we start fixing a \emph{transition time} $t_0$ given by
\begin{equation}\label{eq:def:time_t0}
t_0 \:=\, \vert z \vert^{\frac{\alpha}{1+\alpha(h-1)}}.
\end{equation}
Notably, $t_0$ represents the transition time between the diagonal and the off-diagonal regime, accordingly to the intrinsic time scales of the system.
We then decompose $\Delta^3_{x_0}u(z)$ as $R_1(z)+R_2(z)$, where
\begin{align*}
  R_1(z) \, &:= \, \int_{0}^{t_0}e^{-\lambda t}\Delta^3_{x_0}\bigl(P_tg\bigr)(z) \, dt; \\
  R_2(z) \, &:= \, \int_{t_0}^{\infty}e^{-\lambda t}\Delta^3_{x_0}\bigl(P_tg\bigr)(z) \, dt.
\end{align*}
The first component $R_1$ is controlled easily using Corollary \ref{corollary:continuity_between_holder} for $\beta=\gamma$.
Indeed,
\begin{equation}\label{Proof:COntrol_tildeLambda1}
\vert R_1(z)\vert \, \le \, \int_{0}^{t_0}\vert\Delta^3_{x_0}\bigl(P_tg\bigr)(z)\vert \, dt \, \le \, \Vert
P_tg\Vert_{C^{\beta}_{b,d}}\vert z \vert^{\frac{\beta}{1+\alpha(h-1)}}\int_{0}^{t_0}\, dt \, \le \, C\Vert g \Vert_{C^\beta_{b,d}}\vert z
\vert^{\frac{\alpha+\beta}{1+\alpha(h-1)}}.
\end{equation}
On the other hand, the control for $R_2$ can be obtained following Equation \eqref{Proof:eq:Taylor_Expansion} in order to write that
\begin{equation}\label{Proof:COntrol_tildeLambda2}
\begin{split}
\vert R_2(z) \vert \, &\le \,C\Vert g \Vert_{C^\beta_{b,d}}\vert z \vert^3 \int_{t_0}^{\infty}e^{-\lambda
t}\bigl(1+t^{\frac{\beta-3(1+\alpha(h-1))}{\alpha}}\bigr)\, dt \\
&\le C\Vert g \Vert_{C^\beta_{b,d}}\vert z \vert^3\bigl(\lambda^{-1}+\vert
z\vert^{\frac{\alpha+\beta-3(1+\alpha(h-1))}{1+\alpha(h-1)}}\bigr) \\
&\le C\Vert g \Vert_{C^\beta_{b,d}}\vert z \vert^{\frac{\alpha+\beta}{1+\alpha(h-1)}},
\end{split}
\end{equation}
where, in the last step, we exploited that $\vert z \vert \le 1$.
\end{proof}

\begin{theorem}[Parabolic Schauder Estimates]
\label{thm:Parabolic_Schauder_Estimates}
Fixed $T>0$ and $\beta$ in $(0,1)$, let $u_0$ be in $C^{\alpha+\beta}_{b,d}(\R^N)$ and $f$ in
$L^\infty\bigl(0,T;C^{\beta}_{b,d}(\R^N)\bigr)$.
Then, the weak solution $u$ of Cauchy Problem \eqref{eq:Parabolic_IPDE} is in $L^\infty\bigl(0,T;C^{\alpha+\beta}_{b,d}(\R^N)\bigr)$ and
there exists a constant $C:=C(T)>0$ such that
\begin{equation}\label{eq:Schauder_Estimate_parabolic}
\Vert u \Vert_{L^\infty(C^{\alpha+\beta}_{b,d})} \, \le \, C\bigl[\Vert u_0 \Vert_{C^{\alpha+\beta}_{b,d}}+\Vert f
\Vert_{L^\infty(C^{\beta}_{b,d})} \bigr].
\end{equation}
\end{theorem}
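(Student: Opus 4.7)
The proof will follow the same template as the elliptic estimate (Theorem 5.2), now starting from the mild representation provided by Theorem \ref{thm:well_posedness_parabolic}:
\[ u(t,x) = P_tu_0(x) + \int_0^t P_{t-s}f(s,x)\,ds. \]
The $L^\infty$-component of the $C^{\alpha+\beta}_{b,d}$-norm is controlled at once by the contraction of $P_t$ on $B_b(\R^N)$ together with the trivial bound $\int_0^t\|P_{t-s}f(s,\cdot)\|_\infty\,ds \leq T\|f\|_{L^\infty(C^\beta_{b,d})}$. For the Hölder part, I would use the $\Delta^3_{x_0}$-characterization from Lemma \ref{lemma:def_equivalent_norm} and reduce the proof, for each fixed $h$ in $\llbracket 1,n\rrbracket$, $x_0\in\R^N$ and $z\neq 0$ in $E_h(\R^N)$, to the pointwise bound
\[ \bigl|\Delta^3_{x_0}u(t,z)\bigr| \leq C\bigl[\|u_0\|_{C^{\alpha+\beta}_{b,d}}+\|f\|_{L^\infty(C^\beta_{b,d})}\bigr]\,|z|^{(\alpha+\beta)/(1+\alpha(h-1))}, \]
uniformly in $t\in[0,T]$.

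The homogeneous contribution $\Delta^3_{x_0}(P_tu_0)(z)$ is handled immediately: since $\beta<1$ gives $\alpha+\beta<1+\alpha$, Corollary \ref{corollary:continuity_between_holder} applied with $\beta=\gamma=\alpha+\beta$ yields $\|P_tu_0\|_{C^{\alpha+\beta}_{b,d}}\leq C\|u_0\|_{C^{\alpha+\beta}_{b,d}}$ uniformly in $t\in[0,T]$, and the desired $\Delta^3_{x_0}$-bound follows from the definition of the anisotropic norm.

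The Duhamel term is the core of the argument. The macroscopic regime $|z|\geq 1$ is immediate from the contraction property of $P_{t-s}$ on $B_b(\R^N)$, exactly as in \eqref{Proof:Schauder:z>1}. For $|z|<1$, I would introduce the transition time $t_0:=|z|^{\alpha/(1+\alpha(h-1))}$, which matches the intrinsic anisotropic scale of the $h$-th direction, and split the $s$-integral at $s=t-(t\wedge t_0)$. On the diagonal sub-interval where $t-s\leq t_0$, Corollary \ref{corollary:continuity_between_holder} applied with $\beta=\gamma$ gives $\|P_{t-s}f(s,\cdot)\|_{C^\beta_{b,d}}\leq C\|f(s,\cdot)\|_{C^\beta_{b,d}}$; the $\Delta^3$-characterization then produces a pointwise bound of order $|z|^{\beta/(1+\alpha(h-1))}$, and integration over an interval of length at most $t_0$ supplies exactly the missing factor $|z|^{\alpha/(1+\alpha(h-1))}$. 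On the off-diagonal sub-interval where $t-s\geq t_0$, I would apply the triple Taylor expansion used in \eqref{Proof:eq:Taylor_Expansion}, converting $\Delta^3_{x_0}(P_{t-s}f(s,\cdot))(z)$ into $|z|^3$ times $\|D^3_{I_h}P_{t-s}f(s,\cdot)\|_\infty$. Theorem \ref{prop:Control_in_Holder_norm} with $h=h'=h''$ and $\gamma=\beta$ bounds the latter by $C\|f(s,\cdot)\|_{C^\beta_{b,d}}\bigl(1+(t-s)^{(\beta-3(1+\alpha(h-1)))/\alpha}\bigr)$, where the exponent is strictly smaller than $-1$ since $\alpha+\beta<3\leq 3(1+\alpha(h-1))$. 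Consequently, the time integral from $t_0$ up to $t$ is dominated by its lower endpoint, producing a factor proportional to $t_0^{(\alpha+\beta-3(1+\alpha(h-1)))/\alpha}=|z|^{(\alpha+\beta)/(1+\alpha(h-1))-3}$ which, multiplied by $|z|^3$, reproduces exactly the correct spatial exponent.

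The main obstacle is precisely the calibration of $t_0$: a direct application of Corollary \ref{corollary:continuity_between_holder} at level $\gamma=\alpha+\beta$ would yield a non-integrable time singularity $(t-s)^{-1}$ in the Duhamel formula. The third-order-difference splitting, combined with the transition time tuned to the anisotropic scale of the system, is what transforms this borderline divergence into a finite bound with the correct homogeneity along each direction $E_h(\R^N)$; beyond this, the argument is essentially the parabolic counterpart of the elliptic splitting $R_1+R_2$ of Theorem 5.2.
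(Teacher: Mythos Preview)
Your proposal is correct and follows essentially the same approach as the paper: split $u=P_tu_0+\int_0^tP_{t-s}f(s,\cdot)\,ds$, treat the homogeneous part via Corollary~\ref{corollary:continuity_between_holder} with $\gamma=\beta=\alpha+\beta$, and for the Duhamel term use the $\Delta^3_{x_0}$-characterization together with the diagonal/off-diagonal splitting at the transition time $t_0=|z|^{\alpha/(1+\alpha(h-1))}$, invoking Corollary~\ref{corollary:continuity_between_holder} on the short piece and the third-order Taylor expansion plus Theorem~\ref{prop:Control_in_Holder_norm} on the long piece. The only cosmetic difference is that the paper first performs the change of variable $s\mapsto t-s$ (writing $u_2(t,x)=\int_0^tP_sf(t-s,x)\,ds$) before splitting at $t\wedge t_0$, whereas you split the original integral at $s=t-(t\wedge t_0)$; these are the same decomposition.
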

\begin{proof}
We are going to show that any function $u$ given by Equation \eqref{eq:representation_parabolic_sol}
satisfies the Schauder Estimates \eqref{eq:Schauder_Estimate_parabolic}. We start splitting the function $u$ in $u_1+u_2$, where
\begin{align}\label{Proof:Decomposition_Parabolic}
  u_1(t,x) &:= P_tu_0(x); \\
  u_2(t,x) &:= \int_{0}^{t}P_{s}f(t-s,x) \, ds.
\end{align}
Corollary \ref{corollary:continuity_between_holder} allows then to control $u_1$ in the
following way:
\[\Vert u_1 \Vert_{L^{\infty}(C^{\alpha+\beta}_{b,d})} \, = \, \sup_{t \in [0,T]}\Vert P_tu_0\Vert_{C^{\alpha+\beta}_{b,d}} \, \le \, C\Vert
u_0 \Vert_{C^{\alpha+\beta}_{b,d}}.\]
In order to deal with the contribution $u_2$, we will follow essentially the same reasoning for the Schauder Estimates in the elliptic
setting. Namely, we use again the equivalent norm defined in \eqref{eq:def_equivalent_norm} of Lemma \ref{lemma:def_equivalent_norm} in order to estimate
\[\Vert u_2\Vert_{L^\infty(C^{\alpha+\beta}_{b,d})} \, \le \, C\Vert f\Vert_{L^\infty(C^{\beta}_{b,d})}.\]
Fixed $h$ in $\llbracket 1,n\rrbracket$ and $x_0$ in $\R^N$, our aim is to show that
\[\vert \Delta^3_{x_0}u_2(z)\vert \, = \, \Bigl{\vert}\int_{0}^{t}\Delta^3_{x_0}\bigl(P_{t-s}f\bigr)(s,z) \, ds\Bigr{\vert} \,
\le \, C\Vert f \Vert_{L^\infty(C^\beta_{b,d}\beta)}\vert z\vert^{\frac{\alpha+\beta}{1+\alpha(h-1)}},\quad z\in E_h(\R^N),\]
for some constant $C>0$ independent from $x_0$. When $\vert z \vert\ge 1$, it can be obtained easily from the contraction property of $P_t$
on $C_b(\R^N)$ as in \eqref{Proof:Schauder:z>1}. For $\vert z \vert\le 1$, we fix again the transition time $t_0$ given in \eqref{eq:def:time_t0}
and we then decompose $\Delta^3_{x_0}u_2(t,z)$ as $\tilde{R}_1(t,z)+\tilde{R}_2(t,z)$, where
\begin{align*}
  \tilde{R}_1(t,z) \, &:= \, \int_{0}^{t\wedge t_0}\Delta^3_{x_0}\bigl(P_{s}f\bigr)(t-s,z) \, ds: \\
  \tilde{R}_2(t,z) \, &:= \, \int_{t\wedge t_0}^{t}\Delta^3_{x_0}\bigl(P_{s}f\bigr)(t-s,z) \, ds.
\end{align*}
The first component $R_1$ can be controlled easily as in \eqref{Proof:COntrol_tildeLambda1}:
\[
\begin{split}
\vert \tilde{R}_1(t,z)\vert \, &\le \, \int_{0}^{t\wedge t_0}\vert\Delta^3_{x_0}\bigl(P_{s}f\bigr)(t-s,z)\vert \, ds \\
&\le \, \vert z \vert^{\frac{\beta}{1+\alpha(h-1)}}\int_{0}^{t\wedge t_0} \Vert P_{s}f(t-s,\cdot)\Vert_{C^{\beta}_{b,d}}\, ds \\
&\le \, C\Vert f \Vert_{L^\infty(C^\beta_{b,d})}\vert z \vert^{\frac{\alpha+\beta}{1+\alpha(h-1)}}.
\end{split}
\]
On the other hand, the control for $R_2$ is obtained following the same steps used in Equation \eqref{Proof:COntrol_tildeLambda2}. Namely,
\[
\begin{split}
\vert \tilde{R}_2(t,z) \vert \, &\le \,C\Vert f \Vert_{L^\infty(C^\beta_{b,d})}\vert z \vert^3 \int_{t\wedge
t_0}^{\infty}\bigl(1+s^{\frac{\beta-3(1+\alpha(h-1))}{\alpha}}\bigr)\, ds \\
&\le C\Vert f \Vert_{L^\infty(C^\beta_{b,d})}\vert z \vert^{\frac{\alpha+\beta}{1+\alpha(h-1)}}. \qedhere
\end{split}
\]
\end{proof}

\setcounter{equation}{0}
\section{Extensions to Time Dependent Operators}
In this final section, we would like to show some possible extensions of our method in order to include more general
operators with non-linear, space-time dependent coefficients. Even in this framework, we will prove the well-posedness of the parabolic
Cauchy problem and show the associated Schauder estimates. \newline
Following \cite{Krylov:Priola10}, our first step is to consider a time-dependent Ornstein-Uhlenbeck operator of the following form:
\begin{multline*}
\mathcal{L}^{\text{ou}}_t\phi(t,x) \, := \\
\frac{1}{2}\text{Tr}\bigl(B_tQB_t^\ast D^2\phi(x)\bigr) +\langle A_tx,D \phi(x)\rangle + \int_{\R^d_0}\bigl[\phi(x+B_tz)-\phi(x)-\langle D_x\phi(x), B_tz\rangle\mathds{1}_{B(0,1)}(z) \bigr] \,\nu(dz),
\end{multline*}
where $B_t:=B\sigma_0(t)$ and $A_t$, $\sigma_0(t)$ are two time-dependent matrixes in $\R^N\otimes \R^N$ and $\R^d\otimes\R^d$, respectively.
From this point further, we assume that the matrixes $A_t$, $\sigma_0(t)$ are measurable in time and that they satisfy the following conditions:
\begin{description}
  \item[{[tK]}] for any fixed $t$ in $[0,T]$, it holds that  $N \, = \, \text{rank}\bigl[B,A_tB,\dots,A^{N-1}_tB\bigr]$;
  \item[{[B]}] the matrix $A_t$ is bounded in time, i.e.\ there exists a constant $\eta>0$ such that
  \[\vert A_t \xi\vert \, \le \,  \eta \vert \xi \vert, \quad \xi \in \R^N;\]
  \item[{[UE]}] the matrix $\sigma_0$ is uniformly elliptic, i.e.\ it holds that
  \[\eta^{-1}\vert \xi \vert^2 \, \le \, \langle \sigma_0(t)\xi,\xi \rangle \, \le \, \eta \vert \xi \vert^2 , \quad (t,\xi) \in [0,T]\times \R^d. \]
\end{description}
It is important to highlight already that this new "time-dependent" version [\textbf{tK}] of the Kalman rank condition [\textbf{K}] allows us to reproduce the same reasonings of Section $2$. In particular, the anisotropic distance $d$ and the Zygmund-H\"older spaces $C^\beta_{b,d}(\R^N)$ can be constructed under these assumptions, even if only at any \emph{fixed} time $t$. A priori, the number of sub-divisions of the space $\R^N$ may change for different times, leading to consider a time-dependent $n(t)$ in Equation \eqref{eq:def_of_n} and, consequently, time-dependent anisotropic distances and H\"older spaces. We will however drop the subscript in $t$ below since it does not add any difficulty in the arguments but it may damage the readability of the article.

\begin{prop}
\label{thm:well_posedness_parabolic_time}
Let $u_0$ be in $C_b(\R^N)$ and $f$ in $L^\infty\bigl(0,T;C_b(\R^N)\bigr)$. Then, there exists a unique solution $u\colon [0,T]\times\R^N\to
\R$ of the following Cauchy problem:
\begin{equation}\label{eq:Parabolic_IPDE:time}
\begin{cases}
  \partial_tu(t,x) \, = \, \mathcal{L}^{\text{ou}}_tu(t,x)+f(t,x), \quad (t,x) \in (0,T)\times \R^N; \\
  u(0,x) \, = \, u_0(x), \quad x \in \R^N.
\end{cases}
\end{equation}
Furthermore, if $u_0$ is in $C^{\alpha+\beta}_{b,d}(\R^N)$ and $f$ in
$L^\infty\bigl(0,T;C^{\beta}_{b,d}(\R^N)\bigr)$, then $u$ is in $L^\infty\bigl(0,T;C^{\alpha+\beta}_{b,d}(\R^N)\bigr)$ and
there exists a constant $C:=C(T,\eta)>0$ such that
\begin{equation}\label{eq:Schauder_Estimate_parabolic:time}
\Vert u \Vert_{L^\infty(C^{\alpha+\beta}_{b,d})} \, \le \, C\bigl[\Vert u_0 \Vert_{C^{\alpha+\beta}_{b,d}}+\Vert f
\Vert_{L^\infty(C^{\beta}_{b,d})} \bigr].
\end{equation}
\end{prop}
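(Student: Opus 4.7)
The plan is to reduce this time-dependent problem to the arguments already carried out for the autonomous case by introducing a suitable two-parameter semigroup and checking that all the analytic controls from Sections $3$--$5$ extend uniformly in the time variables.

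\textbf{Step 1: two-parameter transition semigroup.} Under assumption [\textbf{B}] the resolvent $\{\Phi(t,s)\colon 0\le s\le t\le T\}$ of the non-autonomous linear equation $\dot{X}_t=A_tX_t$ is well-defined and uniformly bounded on $[0,T]^2$. The mild solution of the associated SDE is
\[X^{s,x}_t \, = \, \Phi(t,s)x+\int_{s}^{t}\Phi(t,r)B_r\,dZ_r,\]
and we define $P_{s,t}\phi(x):=\mathbb{E}[\phi(X^{s,x}_t)]$. By the Markov property this family satisfies the two-parameter semigroup property $P_{s,t}P_{t,u}=P_{s,u}$, and its formal time-dependent generator (acting on the backward variable) is $\mathcal{L}^{\text{ou}}_t$.

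\textbf{Step 2: uniform anisotropic decomposition.} The crucial input is a non-autonomous analogue of Lemma \ref{lemma:Decomposition_exp_A} and Proposition \ref{prop:Decomposition_Process_X}: I would show that
\[\Phi(t,s)\mathbb{M}_{t-s} \, = \, \mathbb{M}_{t-s}R_{s,t}\]
for some invertible matrix $R_{s,t}$ whose norm and whose bound from below on $\mathbb{S}^{d-1}$ for $R_{s,t}B$ depend only on $\eta$, $T$, $N$, exploiting the time-dependent Kalman condition [\textbf{tK}] together with [\textbf{B}]. Combined with the uniform ellipticity [\textbf{UE}] of $\sigma_0$, this lets me write $\Lambda_{s,t}:=\int_s^t\Phi(t,r)B_r\,dZ_r\stackrel{\text{law}}{=}\mathbb{M}_{t-s}S^{s,t}_{t-s}$ for a L\'evy process $S^{s,t}$ whose L\'evy measure satisfies [\textbf{ND}] (with the same $\alpha$) and \emph{uniformly} in $(s,t)\in[0,T]^2$.

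\textbf{Step 3: extending the smoothing estimates.} Once the uniform decomposition of Step~$2$ is in hand, Propositions \ref{prop:control_on_truncated_density}, \ref{prop:control_in_inf_norm}, Theorem \ref{prop:Control_in_Holder_norm} and Corollaries \ref{coroll:C0_Cgamma_control}, \ref{corollary:continuity_between_holder} go through verbatim for the family $\{P_{s,t}\}$ with constants depending only on $\eta$, $T$, $\alpha$, $\nu$, $r_0$, $\mu$, $A$, $B$, $N$. In particular,
\[\Vert P_{s,t}\Vert_{\mathcal{L}_c(C^\beta_{b,d},C^\gamma_{b,d})} \, \le \, C\bigl(1+(t-s)^{\frac{\beta-\gamma}{\alpha}}\bigr), \quad 0\le s<t\le T,\ 0\le\beta\le\gamma<1+\alpha.\]

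\textbf{Step 4: well-posedness and Schauder estimates.} I would then represent the candidate solution via Duhamel's formula
\[u(t,x) \, = \, P_{0,t}u_0(x)+\int_{0}^{t}P_{s,t}f(s,x)\,ds.\]
Existence in the weak sense of \eqref{eq:weak_solution_Parabolic} (with $\mathcal{L}^{\text{ou}}$ replaced by $\mathcal{L}^{\text{ou}}_t$) is proven by mollification in space of $u_0$ and $f$, exactly as in Theorem \ref{thm:well_posedness_parabolic}, using that $\partial_t P_{s,t}\phi=\mathcal{L}^{\text{ou}}_t P_{s,t}\phi$ for $\phi$ regular. For uniqueness, the Fourier approach of Theorem \ref{thm:well_posedness_parabolic} extends: the symbol of $\mathcal{L}^{\text{ou}}_t$ is now time-dependent, but the resulting ODE in $\xi$ is still explicitly solvable and one recovers the representation of $u_m$ through $P_{s,t}$, so that $\Vert u_m\Vert_\infty\le C\Vert f_m\Vert_\infty\to 0$. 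Finally, splitting $u=u_1+u_2$ as in \eqref{Proof:Decomposition_Parabolic} and repeating the three-regime analysis of Theorem \ref{thm:Parabolic_Schauder_Estimates} with the uniform bound of Step~$3$ in place of Corollary \ref{corollary:continuity_between_holder} yields the Schauder estimate \eqref{eq:Schauder_Estimate_parabolic:time}.

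The main obstacle is Step~$2$: showing that [\textbf{tK}] together with [\textbf{B}] suffices to produce the anisotropic factorization $\Phi(t,s)\mathbb{M}_{t-s}=\mathbb{M}_{t-s}R_{s,t}$ with constants uniform in $(s,t)$. In the autonomous case this rests on the explicit block-triangular structure of $A$ provided by Theorem \ref{Thm:Lancon_Pol}; here the same structure has to be recovered (in a quantitative, time-uniform way) from [\textbf{tK}] and the measurability/boundedness of $A_t$, which is the real technical core of the extension. Once this is secured, every subsequent control is a straightforward non-autonomous adaptation of the estimates already proven.
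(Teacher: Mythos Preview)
Your proposal is correct and follows essentially the same route as the paper: replace the matrix exponential by the two-parameter resolvent $\mathcal{R}_{s,t}$, redo the anisotropic factorization of Lemma~\ref{lemma:Decomposition_exp_A} and Proposition~\ref{prop:Decomposition_Process_X} in this non-autonomous setting, and then rerun Sections~$3$--$5$ verbatim with uniform constants. The one point you flag as the main obstacle---the time-uniform factorization $\Phi(t,s)\mathbb{M}_{t-s}=\mathbb{M}_{t-s}R_{s,t}$ under [\textbf{tK}] and [\textbf{B}]---is not proved from scratch in the paper either; it is outsourced to Lemmas~$5.1$ and~$5.2$ of \cite{Huang:Menozzi16}.
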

\begin{proof}
The proof of this result can be obtained mimicking the arguments already presented in the first part of the article with some slight modifications. The main difference is the introduction of the resolvent $\mathcal{R}_{s,t}$ associated with the matrix $A_t$ in place of the matrix exponential $e^{tA}$. Namely, $\mathcal{R}_{s,t}$ is a time-dependent matrix in $\R^N\otimes \R^N$ that is solution of the following ODE:
\begin{equation}\label{eq:def_resolvent}
\begin{cases}
  \partial_t \mathcal{R}_{s,t} \, = \, A_t \mathcal{R}_{s,t} ,\quad t,s \in [0,T]; \\
  \mathcal{R}_{s,s} \, = \, \Id_{N\times N}.
\end{cases}
  \end{equation}
As said before, Section $2$ follows exactly in the same manner as above except for Lemma \ref{lemma:Decomposition_exp_A} (structure of the resolvent), whose proof can be found in \cite{Huang:Menozzi16}, Lemmas $5.1$ and $5.2$.
The arguments in Section $3$ and $4$ can be applied again, even if the formulation of some objects presented there changes slightly. For example in Equation \eqref{eq:Def_OU_Process}, the N-dimensional Ornstein-Uhlenbeck process $\{X_t\}_{t\ge 0}$ driven by $B_tZ_t$ should be now represented by
\[X_t \, = \, \mathcal{R}_{0,t}x+\int_{0}^{t}\mathcal{R}_{s,t}B_s \, dZ_s, \quad t\ge 0, x \in \R^N.\]
Finally in Section $5$, the uniform ellipticity [\textbf{UE}] of $\sigma_0(t)$ and the boundedness [\textbf{B}] of $A_t$ allow us to control the remainder terms appearing in Equation \eqref{eq:remainder_in_existence} as done above and thus, to conclude as in Theorems \ref{thm:well_posedness_parabolic} and \ref{thm:Parabolic_Schauder_Estimates}.
\end{proof}

Once we have shown our results for the time-dependent Ornstein-Uhlenbeck operator $\mathcal{L}^{\text{ou}}_t$, we add now a non-linearity to the problem, even if only dependent in time. Namely, we are interested in operators of the following form:
\begin{equation}\label{eq:def_operator_Lt}
L_t\phi(t,x) \, := \, \mathcal{L}^{\text{ou}}_t\phi(t,x)+\langle F_0(t),D_x\phi(x)\rangle -c_0(t)\phi(x), \quad (t,x) \in [0,T]\times
\R^N,
\end{equation}
where $c_0\colon [0,T]\to \R$ and $F_0\colon [0,T]\to \R^N$ are two functions. For any sufficiently regular function $\phi\colon[0,T]\to \R$, we are going to denote
\begin{equation}\label{eq:relation_weak_solutions}
\mathcal{T}\phi(t,x) \, := \, e^{-\int_{0}^{t}c_0(s) \,ds}\phi\Bigl(t,x+\int_{0}^{t}F_0(s) \, ds\Bigr), \quad (t,x) \in [0,T]\times \R^N.
\end{equation}
We will see in the next result that the "operator" $\mathcal{T}$ transforms solutions of the Cauchy problem associated with
$\mathcal{L}^\text{ou}_t$ to solutions of the Cauchy problem driven by $L_t$, even if for a modified drift $\mathcal{T}f$.

\begin{lemma}
\label{lemma:relation_weak_solutions}
Fixed $T>0$, let $u_0$ be in $C_b(\R^N)$, $f$ in $L^\infty\bigl(0,T;C_b(\R^N)\bigr)$ and $c_0$, $F_0$ in $C_b([0,T])$. Then, a function
$u\colon [0,T]\times \R^N\to \R$ is a weak solution of Cauchy Problem \eqref{eq:Parabolic_IPDE:time} if and only if the function $v\colon
[0,T]\times \R^N\to \R$ given by $v(t,x)=\mathcal{T}u(t,x)$ is a weak solution of the following Cauchy problem:
\begin{equation}\label{eq:Parabolic_IPDE:ext0}
\begin{cases}
  \partial_tu(t,x) \, = \, L_tu(t,x)+\mathcal{T}f(t,x), \quad (t,x) \in (0,T)\times \R^N; \\
  u(0,x) \, = \, u_0(x), \quad x \in \R^N.
\end{cases}
\end{equation}
In particular, there exists a unique weak solution of Cauchy Problem \eqref{eq:Parabolic_IPDE:ext0}.
\end{lemma}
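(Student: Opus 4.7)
The plan is to reduce the Cauchy problem \eqref{eq:Parabolic_IPDE:ext0} to the one already solved in Proposition \ref{thm:well_posedness_parabolic_time}, by showing that the explicit transformation $\mathcal{T}$ is an isomorphism between weak solutions of the two problems. Writing $C(t) := \int_0^t c_0(s)\,ds$ and $G(t) := \int_0^t F_0(s)\,ds$, so that $v(t,x) = e^{-C(t)}u(t,x+G(t))$, note immediately that $v(0,x) = u_0(x)$ since $C(0)=G(0)=0$, and that $\mathcal{T}$ is a continuous linear bijection on both $C_b(\R^N)$ and $L^\infty(0,T;C_b(\R^N))$, with inverse $\mathcal{T}^{-1}\phi(t,x) = e^{C(t)}\phi(t,x-G(t))$.

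The heart of the proof is a direct verification, first carried out at the level of smooth functions, that $\mathcal{T}$ intertwines $\partial_t - \mathcal{L}^{\text{ou}}_t$ with $\partial_t - L_t$ modulo transforming the source $f$ into $\mathcal{T}f$. Concretely, I would compute $\partial_t v$ via the chain rule: this produces the three contributions $-c_0(t)v(t,x)$, $\langle F_0(t), D_x v(t,x)\rangle$, and $e^{-C(t)}[\partial_t u](t,x+G(t))$. Substituting the equation satisfied by $u$ in the last term and using that $\mathcal{L}^{\text{ou}}_t$ is linear and its non-local part is translation-invariant, one rearranges the identity into $\partial_t v(t,x) = \mathcal{L}^{\text{ou}}_t v(t,x) + \langle F_0(t), D_x v(t,x)\rangle - c_0(t)v(t,x) + \mathcal{T}f(t,x) = L_t v(t,x) + \mathcal{T}f(t,x)$. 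This verifies the smooth case in both directions simultaneously, since the computation is reversible via $u = \mathcal{T}^{-1}v$.

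To promote the equivalence to the weak setting \eqref{eq:weak_solution_Parabolic}, I would transform at the level of test functions: given a test function $\phi \in C^\infty_c([0,T)\times\R^N)$ for the $L_t$-problem, the corresponding test function for the $\mathcal{L}^{\text{ou}}_t$-problem is $\psi(t,x) := e^{C(t)}\phi(t,x-G(t))$, and a spatial change of variable $y = x-G(t)$ in the identity \eqref{eq:weak_solution_Parabolic} written for $u$ reshapes term by term into the weak formulation for $v$ tested against $\phi$, provided the dual algebraic identity $\mathcal{T}^*(\mathcal{L}^{\text{ou}}_t)^* = L_t^*\mathcal{T}^*$ holds — and this is just the formal adjoint of the pointwise computation carried out in the smooth case. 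Equivalently, one can mollify $u$ in $x$ as in the proof of Theorem \ref{thm:well_posedness_parabolic}, apply the smooth intertwining to each mollification, and pass to the limit using the uniform boundedness of $\mathcal{T}$ together with standard convergence of mollifiers.

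With the equivalence established, existence and uniqueness for \eqref{eq:Parabolic_IPDE:ext0} are immediate consequences of the analogous results for \eqref{eq:Parabolic_IPDE:time}: existence follows by solving the $\mathcal{L}^{\text{ou}}_t$-problem with initial datum $u_0$ and a suitable source (namely $\mathcal{T}^{-1}$ applied to the given source of \eqref{eq:Parabolic_IPDE:ext0}) via Proposition \ref{thm:well_posedness_parabolic_time}, then applying $\mathcal{T}$; uniqueness follows because any two weak solutions $v_1,v_2$ of \eqref{eq:Parabolic_IPDE:ext0} produce $\mathcal{T}^{-1}v_1,\mathcal{T}^{-1}v_2$ which are both weak solutions of the same \eqref{eq:Parabolic_IPDE:time} and hence coincide. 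The main technical obstacle I anticipate is the bookkeeping in the algebraic intertwining — in particular, checking that all contributions of $\mathcal{L}^{\text{ou}}_t$ (the second-order, first-order and non-local parts, as well as the affine drift $\langle A_tx,D_x\rangle$) transform consistently under the affine change of variables encoded in $\mathcal{T}$ — rather than any genuinely analytic difficulty.
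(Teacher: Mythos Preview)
Your approach is essentially the same as the paper's: both arguments work by transforming test functions via the map dual to $\mathcal{T}$ and then changing variables in the weak formulation. The paper even writes down exactly your $\psi$ (up to the sign of the exponent, since it tests the equation for $u$ rather than for $v$).

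There is, however, one technical point you have glossed over and which the paper handles explicitly. Since $c_0,F_0$ are only assumed to be in $C_b([0,T])$, your functions $C(t)$ and $G(t)$ are $C^1$ but not $C^\infty$ in $t$. Hence your transformed test function $\psi(t,x)=e^{C(t)}\phi(t,x-G(t))$ is only $C^1$ in time and is not, strictly speaking, an admissible test function in the weak formulation \eqref{eq:weak_solution_Parabolic}, which requires $\phi\in C^\infty_c([0,T)\times\R^N)$. The paper deals with this by first mollifying $c_0$ and $F_0$ to smooth $c_m,F_m$, defining $\psi_m(t,x):=e^{-\tilde c_m(t)}\phi(t,x-\tilde F_m(t))\in C^\infty_c$, plugging $\psi_m$ into the weak formulation, computing, changing variables, and only then letting $m\to\infty$ using uniform convergence of the coefficients. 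Your alternative suggestion of mollifying $u$ in $x$ does not repair this, because spatial mollification gives no control on $\partial_t u_m$, so the ``smooth intertwining via the chain rule'' still cannot be applied pointwise. A clean fix is simply to adopt the paper's mollification of $c_0,F_0$.

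One further remark on your final sentence: the drift $\langle A_t x,D_x\rangle$ is \emph{not} translation invariant, so the ``bookkeeping'' you anticipate is not entirely trivial; the shift $x\mapsto x+G(t)$ produces an extra contribution $\langle A_t G(t),D_x\rangle$ that must be tracked alongside $\langle F_0(t),D_x\rangle$. You flag this correctly as the place to be careful, but be sure to actually carry it through.
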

\begin{proof}
Given a weak solution $u$ of Cauchy problem \eqref{eq:Parabolic_IPDE:time}, we are going to show that the function $v$ given in
\eqref{eq:relation_weak_solutions} is indeed a weak solution of Cauchy Problem \eqref{eq:Parabolic_IPDE:ext0}. The inverse implication  can
be obtained in a similar manner and we will not prove it here. \newline
By mollification if necessary, we can take two sequences $\{c_m\}_{m\in \N}$, $\{F_m\}_{m\in \N}$ in $C^\infty_b([0,T])$ such that $c_m\to c_0$
and $F_m \to F_0$ uniformly in $t$. Furthermore, we denote for simplicity
\[\tilde{c}_m(t) \, := \, \int_{0}^{t}c_m(s) \, ds; \quad \tilde{F}_m(t) \, := \, \int_{0}^{t}F_m(s) \, ds. \]
Given a test function $\phi$ in $C^\infty_c\bigl([0,T)\times \R^N\bigr)$, let us consider for any $m$ in $\N$, the following function
\[\psi_m(t,x) \, := \, e^{-\tilde{c}_{m}(t)}\phi(t,x-\tilde{F}_m(t)) \quad (t,x) \in [0,T]\times\R^N.\]
Since $\tilde{c}_{m}$ and $\tilde{F}_{m}$ are smooth and bounded, it is easy to check that $\psi_m$ is in $C^\infty_c\bigl([0,T)\times
\R^N\bigr)$. We can then use $\psi_m$ in Equation \eqref{eq:weak_solution_Parabolic} (with time-dependent $A_t$ and $B_t$) to show that
\[\int_{0}^{T}\int_{\R^N}\Bigl[\partial_t+\bigl(\mathcal{L}^{\text{ou}}_t\bigr)^*\Bigr]\psi_m(t,y)u(t,y)+f(t,y) \, dydt +
\int_{\R^N}\psi_m(0,y)u_0(y) \, dy \, = \, 0.\]
A direct calculation then show that $\psi_m(0,y)=\phi(0,y)$ and
\begin{align*}
   \bigl(\mathcal{L}^{\text{ou}}_t\bigr)^*\psi_m(t,y) \, &= \,
   e^{-\tilde{c}_m(t)}\bigl(\mathcal{L}^{\text{ou}}_t\bigr)^*\phi(t,y-\tilde{F}_m(t));\\
  \partial_t\psi_m(t,y) \, &= \, e^{-\tilde{c}_m(t)}\Bigl[\partial_t\phi(t,y-\tilde{F}_m(t))-\langle F_m(t),D_y\phi(t,y-\tilde{F}_m(t))
  \rangle-c_m(t)\phi(t,y-\tilde{F}_m(t))\Bigr].
\end{align*}
The above calculations and a change of variable then imply that
\begin{multline*}
\int_{0}^{T}\int_{\R^N}\Bigl[\Bigl(\partial_t+\bigl(\mathcal{L}^{\text{ou}}_t\bigr)^*\Bigr)\phi(t,y)-\langle F_m(t),D_y\phi(t,y) \rangle -
c_m(t)\phi(t,y)\Bigr]\mathcal{T}_mu(t,y)+\phi(t,y)\mathcal{T}_mf(t,y) \, dydt\\
+\int_{\R^N}u_0(y)\phi(0,y) \, dy \, = \, 0,
\end{multline*}
where, analogously to Equation \eqref{eq:relation_weak_solutions}, we have denoted for any function $\varphi\colon [0,T]\times \R^N\to \R$,
\[\mathcal{T}_m\varphi(t,y) \, := \, e^{-\tilde{c}_m(t)}\varphi(t,y+\tilde{F}_m(t)).\]
Following similar arguments exploited in the "existence" part in the proof of Theorem \ref{thm:well_posedness_parabolic}, i.e.\ exploiting
the compact support of $\phi$ and the uniform convergence of the coefficients, it is possible to show that the above expression converges,
when $m$ goes to infinity, to
\[\int_{0}^{T}\int_{\R^N}\Bigl[\partial_t+\bigl(L_t\bigr)^*\Bigr]\phi(t,y)v(t,y)+\mathcal{T}f(t,y) \, dxdt + \int_{\R^N}\phi(0,x)u_0(x) \,
dx]\, = \, 0\]
and thus, that $v$ is a weak solution of Cauchy problem \eqref{eq:Parabolic_IPDE:ext0}.
\end{proof}

Thanks to the previous lemma, we are now able to show the Schauder estimates for the solution $v$ of the Cauchy problem
\eqref{eq:Parabolic_IPDE:ext0} and, more importantly, without changing the constant $C$ appearing in Equation
\eqref{eq:Schauder_Estimate_parabolic:time}.

\begin{prop}
\label{thm:well_posedness_parabolic_Lt}
Fixed $T>0$ and $\beta$ in $(0,1)$, let $u_0$ be in $C^{\alpha+\beta}_{b,d}(\R^N)$, $f$ in $L^\infty\bigl(0,T;C^{\beta}_b(\R^N)\bigr)$ and $c_0$,
$F_0$ in $B_b([0,T])$. Then, the unique solution $v$ of Cauchy Problem \eqref{eq:Parabolic_IPDE:ext0} is in $L^\infty\bigl(0,T;
C^{\alpha+\beta}_b(\R^N)\bigr)$ and it holds that
\begin{equation}\label{eq:Schauder_estimates_Lt}
\Vert v \Vert_{L^\infty(C^{\alpha+\beta}_{b,d})} \, \le \, C\bigl[\Vert u_0 \Vert_{C^{\alpha+\beta}_{b,d}}+\Vert f
\Vert_{L^\infty(C^{\beta}_{b,d})}\bigr],
\end{equation}
where $C:=C(T,\eta)>0$ is the same constant appearing in Theorem \ref{thm:Parabolic_Schauder_Estimates}.
\end{prop}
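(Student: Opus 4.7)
The plan is to reduce everything to Proposition~\ref{thm:well_posedness_parabolic_time} by exploiting the correspondence $v=\mathcal{T}u$ provided by Lemma~\ref{lemma:relation_weak_solutions}. More precisely, given $v$ the unique weak solution of~\eqref{eq:Parabolic_IPDE:ext0} (whose source is $\mathcal{T}f$ and whose initial condition is $u_0$), I would introduce
\[u(t,x)\, :=\, e^{\tilde c_0(t)}v\bigl(t,x-\tilde F_0(t)\bigr), \quad \tilde c_0(t)\, :=\, \int_0^t c_0(s)\,ds, \quad \tilde F_0(t)\, :=\, \int_0^t F_0(s)\,ds,\]
which is the formal inverse of $\mathcal{T}$. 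By the equivalence stated in Lemma~\ref{lemma:relation_weak_solutions} (applied in the reverse direction), $u$ is then the unique weak solution of the unperturbed Cauchy problem~\eqref{eq:Parabolic_IPDE:time} with the \emph{original} source $f$ and initial datum $u_0$; in particular no $\mathcal{T}f$ appears here.

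Having set this up, I would apply Proposition~\ref{thm:well_posedness_parabolic_time} directly to $u$ in order to obtain
\[\Vert u\Vert_{L^\infty(C^{\alpha+\beta}_{b,d})} \, \le \, C(T,\eta)\bigl[\Vert u_0\Vert_{C^{\alpha+\beta}_{b,d}}+\Vert f\Vert_{L^\infty(C^\beta_{b,d})}\bigr],\]
and then transfer this control back to $v=\mathcal{T}u$ by showing that $\mathcal{T}$ acts as a bounded linear operator on $L^\infty(0,T;C^{\alpha+\beta}_{b,d}(\R^N))$. The latter amounts to two elementary observations: first, the anisotropic Zygmund--H\"older norm \eqref{eq:def_anistotropic_norm} is translation-invariant in the spatial variable, since it is defined by taking a supremum over all base-points $x_0\in\R^N$, so the shift by $\tilde F_0(t)$ produces no change in norm; second, the multiplicative factor $e^{-\tilde c_0(t)}$ is uniformly bounded on $[0,T]$ by $e^{T\Vert c_0\Vert_\infty}$. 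Combining these two facts yields $\Vert v(t,\cdot)\Vert_{C^{\alpha+\beta}_{b,d}}\le e^{T\Vert c_0\Vert_\infty}\Vert u(t,\cdot)\Vert_{C^{\alpha+\beta}_{b,d}}$ for every $t\in[0,T]$, and absorbing the multiplicative factor into the constant gives precisely \eqref{eq:Schauder_estimates_Lt}.

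The essentially only non-trivial step is the translation-invariance of $\Vert\cdot\Vert_{C^{\alpha+\beta}_{b,d}}$, which is immediate from definition~\eqref{eq:def_anistotropic_norm}; no fresh PDE analysis is needed, since Lemma~\ref{lemma:relation_weak_solutions} has already done the job of reducing the perturbed Cauchy problem to the purely Ornstein--Uhlenbeck one. The main obstacle one might worry about is that the source for \eqref{eq:Parabolic_IPDE:ext0} is $\mathcal{T}f$ rather than $f$, but this is precisely why Lemma~\ref{lemma:relation_weak_solutions} is phrased with $\mathcal{T}f$: the transformation is engineered so that the bound on the right-hand side of \eqref{eq:Schauder_estimates_Lt} involves the original data $(u_0,f)$, independently of $(c_0,F_0)$ beyond the harmless factor $e^{T\Vert c_0\Vert_\infty}$.
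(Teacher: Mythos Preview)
Your approach is essentially identical to the paper's: pass from $v$ to $u=\mathcal{T}^{-1}v$ via Lemma~\ref{lemma:relation_weak_solutions}, invoke Proposition~\ref{thm:well_posedness_parabolic_time} for $u$, and then use the translation-invariance of $\Vert\cdot\Vert_{C^{\alpha+\beta}_{b,d}}$ to transfer the bound back. The only difference lies in the handling of the scalar factor: you bound $e^{-\tilde c_0(t)}\le e^{T\Vert c_0\Vert_\infty}$ and absorb it into $C$, whereas the paper applies the Schauder estimate on each sub-interval $[0,t]$ separately and then uses that $\tilde c_0$ is non-decreasing (implicitly taking $c_0\ge 0$) so that the original constant $C(T,\eta)$ survives unchanged---which is precisely what the statement asserts.
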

\begin{proof}
We start denoting for simplicity
\[\tilde{c}_0(t) \, := \, \int_{0}^{t}c_0(s) \, ds \,\, \text{ and }\,\, \tilde{F}_0(t) \, := \, \int_{0}^{t}F_0(s) \, ds.\]
By Lemma \ref{lemma:relation_weak_solutions}, we know that if $v$ is a weak solution of Cauchy problem \eqref{eq:Parabolic_IPDE:ext0}, then
the function
\[u(t,x)\,:=\,e^{\tilde{c}_0(t)}v(t,x-\tilde{F}_0(t))\]
is the weak solution of Cauchy problem \eqref{eq:Parabolic_IPDE:time} with $\tilde{f}$ instead of
$f$, where
\[\tilde{f}(t,x) \, := \, e^{\tilde{c}_0(t)}f(t,x-\tilde{F}_0(t)), \quad (t,x) \in (0,T)\times\R^N.\]
Moreover, we have that $\tilde{f}$ is in $L^\infty\bigl(0,T;C^{\beta}_b(\R^N)\bigr)$. Considering, if necessary, a smaller time interval $[0,t]$ for some $t\le T$, it is not difficult to check from Proposition
\ref{thm:well_posedness_parabolic_time} that
\[\Vert e^{\tilde{c}_0(t)}v(t,\cdot-\tilde{F}_0(t))\Vert_{C^{\alpha+\beta}_{b,d}} \, \le \, C \bigl[\Vert u_0
\Vert_{C^{\alpha+\beta}_{b,d}}+\sup_{s \in [0,t]}\Vert e^{\tilde{c}_0(s)}f(s,\cdot-\tilde{F}_0(t))\bigr].\]
Using now the invariance of the H\"older norm under translations, we can show that
\[
\begin{split}
\Vert v(t,\cdot)\Vert_{C^{\alpha+\beta}_{b,d}} \, &\le \, C \bigl[e^{-\tilde{c}_0(t)}\Vert u_0
\Vert_{C^{\alpha+\beta}_{b,d}}+e^{-\tilde{c}_0(t)}\sup_{s \in [0,t]}\Vert e^{\tilde{c}_0(s)}f(s,\cdot)\bigr]\\
&\le C \bigl[\Vert u_0
\Vert_{C^{\alpha+\beta}_{b,d}}+\sup_{s \in [0,t]}\Vert f(s,\cdot)\bigr],
  \end{split}
\]
where in the last step we exploited that $\tilde{c}_0(t)$ is non-decreasing. Taking the supremum with respect to $t$ on both sides of the above
inequality, we obtain our result.
\end{proof}

\begin{remark}[About space-time dependent coefficients]
We briefly explain here how to extend the Schauder estimates \eqref{eq:Schauder_Estimate_parabolic} to a class of non-linear, space-time
dependent operators, whose coefficients are only locally H\"older continuous in space and may be unbounded. Namely, we are interested in
operators of the following form:
\begin{multline*}
L_{t,x}\phi(t,x) \, := \\
\int_{\R^d_0}\bigl[\phi(x+B\sigma(t,x)z)-\phi(x)-\langle D_x\phi(x), B\sigma(t,x)z\rangle\mathds{1}_{B(0,1)}(z) \bigr] \,\nu(dz)+\langle
F(t,x),D_x\phi(x)\rangle,
\end{multline*}
where $B$ is as in \eqref{eq:Lancon_Pol} and $\sigma \colon [0,T]\times\R^N\to \R^d\otimes\R^d$, $F\colon [0,T]\times\R^N\to \R^N$ are two
measurable functions such that $F(t,0)$ is locally bounded in time and $\sigma$ satisfies assumption [\textbf{UE}] at any fixed $(t,x)$ in
$[0,T]\times \R^N$.\newline
We would like now the operator $L_{t,x}$ to present a similar "dynamical" behaviour as above, i.e.\ the transmission of
the smoothing effect of the L\'evy operator to the degenerate components of the system; see Example \ref{example_basic}. For this reason, we
suppose the following:
\begin{itemize}
  \item the drift $F=(F_1,\dots,F_n)$ is such that for any $i$ in $\llbracket 1,n\rrbracket$, $F_i$ depends only on time and on the last
      $n-(i-2)$ components, i.e.\ $F_i(t, x_{i-1},\dots,x_n)$;
  \item the matrixes $D_{ x_{i-1}}F_i(t,x)$ have full rank $d_i$ at any fixed $(t,x)$ in $[0,T]\times \R^N$.
\end{itemize}
As said before, the functions $F$ and $\sigma$ are assumed to be only locally H\"older in space, uniformly in time. Namely, there exists a
positive constant $K_0$ such that
\begin{equation}\label{eq:Local_Holder}
d\bigl(\sigma(t,x),\sigma(t,y)\bigr) \, \le \, K_0d^\beta(x,y); \quad d\bigl(F_i(t,x),F_i(t,y)\bigr) \, \le \, K_0d^{\beta+\gamma_i}(x,y)
\end{equation}
for any $i$ in $\llbracket 1,n \rrbracket$, any $t$ in $[0,T]$ and any $x,y$ in $\R^N$ such that $d(x,y)\le 1$, where
\begin{equation}\label{Drift_assumptions}
    \gamma_i \,:= \,
    \begin{cases}
        1+ \alpha(i-2), & \mbox{if } i>1; \\
        0, & \mbox{if } i=1.
   \end{cases}
   \end{equation}
We remark in particular that the function $F$ may be unbounded in space.\newline
In order to recover Schauder-type estimates even in this framework, we can follow a perturbative method firstly introduced in
\cite{Krylov:Priola10} that allows to exploit the already proven results for time-dependent operators.
Let us assume for the moment that $\sigma$ and $F$ are \emph{globally} H\"older continuous in space, i.e. they satisfy
\eqref{eq:Local_Holder} for any $x,y$ in $\R^N$. Informally speaking, the method links the
operator $L_{t,x}$ with the space independent operator $L_t$ defined in \eqref{eq:def_operator_Lt}, by "freezing" the coefficients of
$L_{t,x}$ along a \emph{reference path} $\theta\colon [0,T]\to \R^N$ given by
\[\theta_t \, := \, x_0 +\int_{t_0}^{t}F(s,\theta_s) \, ds,\]
for some $(t_0,x_0)$ in $[0,T]\times \R^N$. It is important to highlight that, since $F$ is only H\"older continuous, we need to fix one of
the possible paths satisfying the above dynamics. We point out that the deterministic flow $\theta_t$ associated with the drift $F$ is
introduced precisely to handle the possible unboundedness of $F$. We could then consider a proxy operator $L_t$ whose coefficients are given
by $\sigma_0(t):=\sigma(t,\theta_t)$, $F_0(t):=F(t,\theta_t)$ and
\[\bigl[A_t\bigr]_{i,j} \, = \,
\begin{cases}
  D_{ x_{i-1}}F_i(t,\theta_t), & \mbox{if } j=i-1; \\
  0, & \mbox{otherwise}
\end{cases}
\]
In particular, Theorem \ref{thm:well_posedness_parabolic_Lt} assures the well-posedness and the Schauder estimates for the Cauchy problem
associated with $L_t$.\newline
The final step of the proof would be to expand a solution $u$ of the Cauchy problem associated with $L_{t,x}$ around the proxy $L_t$ through
a Duhamel-like formula and finally show that the expansion error only brings a negligible contribution so that the Schauder estimates still
hold for the original problem.\newline
The a priori estimates for the expansion error are however quite involved (and they are the main reason why we have decided to not show here
the complete proof), since they rely on some non-trivial controls in appropriate Besov norms.\newline
In order to deal with coefficients that are only locally H\"older in space, we need in addition to introduce a "localized" version of the
above reasoning. It would be necessary to multiply a solution $u$ by a suitable bump function $\delta$ that localizes in space along the
deterministic flow $\theta_t$ that characterizes the proxy. Namely, to fix a smooth function $\rho$ that is equal to $1$ on $B(0,1/2)$
and vanishes outside $B(0,1)$ and define $\delta(t,x) := \rho(x-\theta_t)$. We would then follow the above method but with respect to the
"localized" solution
\[v(t,x) \, := \, \delta(t,x)u(t,x), \quad (t,x) \in [0,T]\times \R^N.\]
We suggest the interested reader to see \cite{Chaudru:Honore:Menozzi18_Sharp} for a detailed treatise of the argument in the degenerate
diffusive setting, \cite{Chaudru:Menozzi:Priola19} in the non-degenerate stable framework or \cite{Marino20} for the precise assumptions on
the coefficients.
\end{remark}

\bibliography{bibli}
\bibliographystyle{alpha}
\end{document}